\chardef\other=12
\def\undocatcodespecials{\catcode`\\=\other     \catcode`\{=\other
  \catcode`\}=\other     \catcode`\<=\other     \catcode`\$=\other
  \catcode`\%=\other     \catcode`\~=\other     \catcode`\^=\other
  \catcode`\_=\other     \catcode`\*=\other     \catcode`\`=\other
  \catcode`\!=\other     \catcode`\"=\other     \catcode`\&=\other
  \catcode`\#=\other     \catcode`\|=\other}
\def\ttindent{5mm} 
{\obeyspaces\global\let =\ }
{\obeylines\gdef\obeylines{\catcode`^^M=\active}\gdef^^M{\par}%
\catcode`#=\active \catcode`&=6 \gdef#{\char35}%
 \gdef\ttverbatim{\begingroup\undocatcodespecials \catcode`\#=\active%
   \parindent 0pt \def\_^^M{\allowbreak}\def\${$}\def\`{`}%
   \def\par{\ifvmode\allowbreak\vskip 1pc plus 1pt\else\endgraf\penalty100\relax\fi}%
   \obeyspaces \obeylines \tt}}
\outer\def\begintt{\par
  \begingroup\advance\leftskip by \ttindent
  \ttverbatim \parskip=0pt \catcode`\|=0 \rightskip-5pc \ttfinish}
{\catcode`\|=0 |catcode`|\=\other 
  |obeylines 
  |gdef||{|char124} %
  |gdef|ttfinish#1^^M#2\endtt{#1|medskip{#2}|endgroup %
  |endgroup%
  |vskip-|parskip|medskip|noindent|ignorespaces}}
\outer\def\beginexample{\par
  \begingroup\advance\leftskip by \ttindent
  \ttverbatim \parskip=0pt \catcode`\|=0 \rightskip-5pc \examplefinish}
{\catcode`\|=0 |catcode`|\=\other 
  |obeylines 
  |gdef||{|char124} %
  |gdef|examplefinish#1^^M#2\endexample{#1|medskip{#2}|endgroup %
  |endgroup%
  |vskip-|parskip|medskip|noindent|ignorespaces}}
\def\Z{{\Bbb Z}}
\newtheorem{theorem}{Theorem}[section]
\newtheorem{corollary}[theorem]{Corollary}
\newtheorem{proposition}[theorem]{Proposition}
\newtheorem{lemma}[theorem]{Lemma}
\theoremstyle{definition}
\newtheorem{example}[theorem]{Example}
\newtheorem{definition}[theorem]{Definition}
\theoremstyle{remark}
\newtheorem{remark}[theorem]{Remark}
\begin{document}
\title{{Embedding surfaces into $S^3$ with maximum symmetry}}
\author{Chao Wang, Shicheng Wang, Yimu Zhang\\ and \\Bruno Zimmermann}

\date{}
\maketitle

\centerline{Department of Mathematics} \centerline{Peking
University, Beijing, 100871 CHINA} \centerline{and} \centerline{
Universita degli Studi di Trieste, Trieste, 34100 Trieste ITALY}

\begin{abstract}
We restrict our discussion to the orientable category. For $g > 1$,
let $OE_g$ be the maximum order of a finite group $G$ acting on the
closed surface $\Sigma_g$ of genus $g$ which extends over $(S^3,
\Sigma_g)$, where the maximum is taken over all possible embeddings $\Sigma_g\hookrightarrow
S^3$. We will determine $OE_g$ for each $g$, indeed the action realizing  $OE_g$.

In particular,
 with 23 exceptions, $OE_g$ is $4(g+1)$  if $g\ne k^2$ or  $4(\sqrt{g}+1)^2$
if $g=k^2$,
and moreover $OE_g$ can be realized by unknotted embeddings for all $g$
except for $g=21$ and $481$.

\end{abstract}

\tableofcontents

\section{Introduction}
Surfaces belong to  the most  familiar topological subjects to us,
mostly because we can see them staying in our 3-space in various
manners. The symmetries of the surfaces have been studied for a long
time, and it will be natural to wonder when these symmetries can be
embedded into the symmetries of our 3-space (3-sphere). In
particular, what are the orders of the maximum symmetries of
surfaces which can be embedded into the symmetries of the 3-sphere
$S^3$? We will solve this maximum order problem  in this paper in
the orientable category.

We use $\Sigma_g$ ($V_g$) to denote the closed orientable surface
(handlebody) of genus $g>1$, and  $G$ to denote a finite group
acting on $\Sigma_g$ or on an orientable 3-manifold. The actions we
consider are always faithful and orientation-preserving on both
surfaces and 3-manifolds. We are always working in  the smooth
category. By the geometrization of finite group actions in dimension
3, for actions on the 3-sphere, we can then restrict to orthogonal
actions.

Let $O_g$  be the maximal order of all finite  groups which can act
on $\Sigma_g$. A classical result of Hurwitz states that $O_g$ is at
most $84(g-1)$ \cite{Hu}. However, to determine $O_g$ is still a
hard and famous question in general, and there are numerous
interesting partial results.

Let $OH_g$  be the maximal order of all finite  groups which can act
on $V_g$. It is a result due to Zimmermann \cite{Zi} that $4(g+1)\le
OH_g\le 12(g-1)$, see also  \cite{MMZ}, moreover $OH_g$ is either
$12(g-1)$ or $8(g-1)$ if $g$ is odd, and each of them is achieved by
infinitely many odd $g$ \cite{MZ}. However, in general $OH_g$ are
still not determined either.

In \cite{WWZZ}, we considered finite group actions on the pair
$(S^3, \Sigma_g)$, with respect to an embedding $e:
\Sigma_g\hookrightarrow S^3$. If $G$ can act on the pair $(S^3,
\Sigma_g)$ such that its restriction on $\Sigma_g$ is the given
$G$-action on $\Sigma_g$, we call the action of $G$ on $\Sigma_g$
extendable (over $S^3$ with respect to $e$).

Call an embedding $e: \Sigma_g\hookrightarrow S^3$ unknotted, if
each component of $S^3\setminus e(\Sigma_g)$ is a handlebody,
otherwise it is knotted. Similarly, we define an action of $G$ on
$V_g$ to  be extendable and the embedding $e:V_g\hookrightarrow S^3$
to be unknotted or knotted. For each $g$, the unknotted embedding is
unique up to isotopy of $S^3$ and automorphisms of $\Sigma_g$ (resp.
$V_g$).

Let $OE_g$  be the maximal order of all finite groups admitting an action on $\Sigma_g$ which
is extendable over $S^3$ w.r.t. $e$ for some $e$. Let $OE^u_g$  be the maximal order of all
finite group actions on $\Sigma_g$ which is extendable over  $S^3$ w.r.t.
some unknotted embedding. Then we know that $4(g+1)\le OE^u_g\le
OH_g\le 12(g-1)$, and there are only finitely many $g$ such that
$OE^u_g = 12(g-1)$; moreover, $OE^u_g\ge 4(n+1)^2$ for each $g=n^2$
\cite{WWZZ}.

In this paper we will determine $OE_g$ for all $g > 1$ (Theorem
\ref{OE}). We can also determine $OE^u_g$ and $OE^k_g$ (Theorem
\ref{OES} and Theorem \ref{OEK}), where $OE^k_g$ denotes the maximal
order of finite group actions on $\Sigma_g$ which extend over $S^3$
w.r.t. all possible knotted embeddings.

\begin{theorem}\label{OE}
The maximal orders $OE_g$ are given in the following table.

\vskip 0.1true cm

\begin{center}
\begin{tabular}{|c|c|}
\hline $OE_g$ & $g$\\\hline $12(g-1)$ & $2, 3, 4, 5, 6, 9, 11, 17
,25, 97, 121, 241, 601$\\\hline $8(g-1)$ & $7, 49, 73$\\\hline
$20(g-1)/3$ & $16, 19, 361$\\\hline $6(g-1)$ & $21, 481$\\\hline
$192$ & $41$\\\hline $7200$ & $1681$\\\hline $4(\sqrt{g}+1)^2$ &
$g=k^2, k\neq3, 5, 7, 11, 19, 41$\\\hline $4(g+1)$ & the remaining
numbers\\\hline
\end{tabular}
\end{center}

\vspace{5pt}
\end{theorem}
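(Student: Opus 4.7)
The plan is to decompose $OE_g = \max\{OE^u_g,\,OE^k_g\}$ and appeal to Theorems \ref{OES} and \ref{OEK}, which determine the two summands; Theorem \ref{OE} then follows by a row-by-row arithmetic comparison. I would prove the unknotted and knotted parts first and read off the table as a consequence.

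For the unknotted value $OE^u_g$, I would split the extendable group $G$ into the index-at-most-$2$ subgroup $G_+$ preserving each side of $\Sigma_g$. Each handlebody is then acted on by $G_+$, forcing $|G_+|\le OH_g \le 12(g-1)$ by Zimmermann \cite{Zi}, while side-swapping contributes at most an extra factor of $2$. The lower bounds $4(g+1)$ and $4(k+1)^2$ for $g=k^2$ are already realized by the explicit equivariant embeddings of \cite{WWZZ}. To pin down when the near-Hurwitz values $12(g-1), 8(g-1), 20(g-1)/3, 6(g-1)$ occur, I would combine the classification of large handlebody groups from \cite{MZ, MMZ} with the additional requirement that the boundary action extend over $S^3$; this cuts the possibilities for $g$ down to a small explicit list and produces the first four rows of the table.

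For the knotted value $OE^k_g$, the equivariant sphere and disk theorems supply a $G$-invariant system of essential surfaces in $S^3\setminus\Sigma_g$; cutting along it reduces the quotient orbifold $(S^3,\Sigma_g)/G$ to simpler pieces on which symmetry bounds are well understood, and by orthogonal geometrization each piece comes from a finite subgroup of $SO(4)$. A finite bounded-genus search then isolates the $g$ for which $OE^k_g$ exceeds $OE^u_g$; these yield the entries $6(g-1)$ at $g=21,481$, the value $192$ at $g=41$, and $7200$ at $g=1681$.

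The main obstacle is the case-by-case verification for the 23 exceptional $g$. For each one must both exhibit an explicit extendable action of the claimed order, typically as a faithful orthogonal action of a finite subgroup of $SO(4)$ on a concrete embedded $\Sigma_g$, and rule out any larger action via Riemann-Hurwitz on the quotient orbifold together with the extendability obstruction coming from the decomposition above. The sporadic values at $g=41$ and $g=1681$ are the most delicate, since they arise from coincidences with finite polyhedral subgroups of $SO(4)$ rather than from a uniform family, and must be identified by hand.
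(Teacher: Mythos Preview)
Your proposal has a genuine gap at the heart of the argument, and also a concrete factual error.

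The central difficulty in determining $OE_g$ is not bounding $|G|$ from above by handlebody-group estimates, nor exhibiting the lower-bound examples from \cite{WWZZ}; it is deciding, for each candidate large action on $V_g$ or $\Sigma_g$, whether that action actually extends over $S^3$. You write that you would ``combine the classification of large handlebody groups from \cite{MZ, MMZ} with the additional requirement that the boundary action extend over $S^3$,'' but you give no mechanism for carrying out that requirement. This is exactly the content of the paper: an extendable action with $|G|>4(g-1)$ forces $\Sigma_g/G$ to be a 2-sphere with four singular points sitting inside a spherical 3-orbifold $S^3/G$ with $|S^3/G|\cong S^3$; one then runs through Dunbar's list of such orbifolds, solves explicit Diophantine constraints on the Seifert invariants, and checks $\pi_1$-surjectivity of each candidate 2-suborbifold (by edge-killing or coset enumeration) to decide connectedness of the preimage. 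Without this orbifold machinery or something of equivalent strength, there is no way to produce the finite exceptional list of $g$'s, and your ``finite bounded-genus search'' for the knotted case is not a search over anything concrete.

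There is also a misreading of the tables. The values $192$ at $g=41$ and $7200$ at $g=1681$ are \emph{unknotted} maxima: they appear in Theorem~\ref{OES}, and one checks $OE^k_{41}=4\cdot 40=160<192$ and $OE^k_{1681}=4\cdot 1680=6720<7200$ from Theorem~\ref{OEK}. The only genera where $OE^k_g>OE^u_g$ are $g=21$ and $g=481$. So your attribution of the sporadic entries at $41$ and $1681$ to the knotted analysis is incorrect; they arise (in the paper) from the non-fibred spherical orbifolds numbered 29 and 30 in Dunbar's list, via unknotted allowable edges. Finally, your upper bound $|G|\le 2\cdot OH_g$ via the side-preserving subgroup $G_+$ is weaker than what is needed: the paper obtains $OE_g\le OH_g$ directly, because when $|G|>4(g-1)$ the quotient $\Sigma_g/G$ already bounds a handlebody orbifold on one side, so its preimage is a $G$-invariant handlebody.
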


\begin{theorem}\label{OES}
The maximal orders $OE^u_g$ are given in the following table.

\vskip 0.1true cm

\begin{center}
\begin{tabular}{|c|c|}
\hline $OE^u_g$ & $g$\\\hline $12(g-1)$ & $2, 3, 4, 5, 6, 9, 11, 17
,25, 97, 121, 241, 601$\\\hline $8(g-1)$ & $7, 49, 73$\\\hline
$20(g-1)/3$ & $16, 19, 361$\\\hline $192$ & $41$\\\hline $7200$ &
$1681$\\\hline $4(\sqrt{g}+1)^2$ & $g=k^2, k\neq3, 5, 7, 11, 19,
41$\\\hline $4(g+1)$ & the remaining numbers\\\hline
\end{tabular}
\end{center}

\vspace{5pt}
\end{theorem}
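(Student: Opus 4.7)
Since $OE^u_g\le OE_g$ for all $g$, the plan is to take Theorem \ref{OE} as an upper bound and split the proof into two complementary tasks: (i) for every $g$ listed in the table of Theorem \ref{OES}, realize the stated value by an extendable action with respect to the unknotted embedding; and (ii) for $g\in\{21,481\}$, show that the value $6(g-1)$ witnessed in Theorem \ref{OE} is not attainable by any unknotted embedding, so that $OE^u_g$ drops to $4(g+1)$ at those two genera. For $g\notin\{21,481\}$ the entries of the two tables agree, so sharpness at those genera follows once (i) is established. The basic structural input is the uniqueness of the unknotted embedding up to isotopy: any extendable $G$-action contains an index-$\le 2$ subgroup $G_0$ preserving each of the two handlebodies $V_g$ and $V_g'$ bounded by $\Sigma_g$, which lets one describe $S^3/G_0$ as a union of two handlebody orbifolds glued along $\Sigma_g/G_0$.

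Task (i) has two stages. For the generic rows one uses the bounds $OE^u_g\ge 4(g+1)$ (for all $g$) and $OE^u_g\ge 4(\sqrt{g}+1)^2$ (when $g=k^2$), both established in \cite{WWZZ} by explicit constructions on the standard Heegaard splitting. For the exceptional rows, I would inspect the extremal action realizing $OE_g$ in Theorem \ref{OE}: in each such case it arises from a finite subgroup of $SO(4)$ acting on $S^3$, and one checks that the invariant Heegaard surface can be chosen as the boundary of a regular neighborhood of a $G$-invariant graph in $S^3$, producing an unknotted embedding. The small spherical triangle groups and their central extensions account for the entries $12(g-1)$, $8(g-1)$, $20(g-1)/3$, $192$ and $7200$ in the table.

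The main obstacle is task (ii): proving that any action of order $6(g-1)$ at $g=21,481$ forces a knotted embedding. The approach is via the quotient orbifold $\Sigma_g/G$, which in both cases has orbifold Euler characteristic $-1/3$ and hence a triangular signature $(p,q,r)$ with $1/p+1/q+1/r=2/3$. If such an action extended over the unknotted embedding, both sides $V_g/G_0$ and $V_g'/G_0$ would be handlebody orbifolds whose union realizes $S^3/G_0$; the plan is to enumerate, using the graph-of-groups description of handlebody orbifolds, the pairs of handlebody fillings of $\Sigma_g/G_0$ and to verify that for $g=21,481$ no such pair yields $S^3$. The obstruction is expected to be detected by the singular set of the $G$-action on $S^3$, whose components, rotation numbers and linking are pinned down by $|G|$ and by the orbifold data; at these genera the required configuration is forced to sit inside $S^3$ as a torus-knot-type link, contradicting the unknotted hypothesis. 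Combining (i), (ii), and the upper bound from Theorem \ref{OE} then completes the proof.
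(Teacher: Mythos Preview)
Your overall plan---deduce Theorem~\ref{OES} from Theorem~\ref{OE} by (i) realizing the listed values unknottedly and (ii) excluding $6(g-1)$ at $g=21,481$---is reasonable, and step (i) is essentially how the paper proceeds (it reads everything off Theorem~\ref{main table}, which records for each action whether an unknotted realization exists). The genuine gap is in step (ii).

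First, a concrete error: for $|G|=6(g-1)$ the quotient $\Sigma_g/G$ does \emph{not} have triangular signature. By Proposition~\ref{bound handlebody} and Lemma~\ref{singular type}, when $|G|>4(g-1)$ the quotient is a $2$-sphere with exactly four cone points, and the only type giving $|G|=6(g-1)$ is $(2,2,3,3)$; a $2$-sphere with three cone points and $1/p+1/q+1/r\le 1$ cannot embed in any $S^3/G$ at all, by Lemma~\ref{4 singular pt}(1). So your computation $1/p+1/q+1/r=2/3$ is based on a wrong ansatz, and the subsequent plan to enumerate handlebody fillings of a triangle orbifold does not apply.

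Second, even with the correct type $(2,2,3,3)$, your proposed obstruction (``enumerate pairs of handlebody fillings and check none yields $S^3$'', or ``the singular set is forced to be a torus-knot-type link'') is not a method but a hope: to decide whether a gluing of two handlebody orbifolds along $S^2(2,2,3,3)$ is a spherical orbifold of the right order you are back to identifying it in Dunbar's list. The paper does exactly this. In Theorem~\ref{classify} one finds that the only spherical $3$-orbifolds supporting an allowable $(2,2,3,3)$ suborbifold at $g=21$ are $\mathcal{O}_{28}$ and $\mathcal{O}_{34}$, and at $g=481$ only $\mathcal{O}_{38}$; in each of these the singular graph has four trivalent vertices, so the complement of a regular neighborhood of any candidacy dashed arc contains all four vertices and therefore cannot be a handlebody orbifold. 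Hence no unknotted dashed arc exists, and the only allowable $(2,2,3,3)$ suborbifolds there are knotted. That is the actual obstruction; your sketch does not reach it.
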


\vskip 0.1true cm

\begin{theorem}\label{OEK}
The maximal orders $OE^k_g$ are given in the following table.

\vskip 0.1true cm

\begin{center}
\begin{tabular}{|c|c|}
\hline $OE^k_g$ & $g$\\\hline $12(g-1)$ & $9, 11, 121, 241$\\\hline
$2400$ & $361$\\\hline $6(g-1)$ & $2, 3, 4, 5, 21, 25, 97,
481$\\\hline $4(g-1)$ & the remaining numbers\\\hline
\end{tabular}
\end{center}
\end{theorem}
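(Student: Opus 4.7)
The plan is to exploit Theorems~\ref{OE} and~\ref{OES}, which already pin down the maximal extendable orders and which of them are realised by the unique unknotted embedding. Since every extendable action extends orthogonally to $S^3$ by geometrization, I would pass to the quotient pair $(S^3/G,\,e(\Sigma_g)/G)$, a spherical $3$-orbifold containing a closed $2$-suborbifold, and characterise knottedness as the failure of this pair to split into two handlebody orbifolds. In this language Theorem~\ref{OEK} becomes a question about which quotient signatures admit non-splitting realisations of the given genus.

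\textbf{Upper bound.} The Riemann--Hurwitz relation $|G|\cdot(-\chi^{\mathrm{orb}}(\Sigma_g/G))=2g-2$ reduces the problem to enumerating the signatures with small $|\chi^{\mathrm{orb}}|$. The signatures giving $|\chi^{\mathrm{orb}}|<1/2$, namely the triangle-type signatures $(0;2,3,7)$, $(0;2,3,8)$, $(0;2,3,9)$, $(0;2,4,5)$ and a handful of related ones, are exactly those supporting the entries $12(g-1)$, $8(g-1)$, $20(g-1)/3$, \dots of Theorems~\ref{OE} and~\ref{OES}. Using the equivariant Dehn lemma and loop theorem of Meeks--Yau, I would show that for most of these signatures any $G$-invariant $\Sigma_g\subset S^3$ is forced to bound two handlebodies: a $G$-invariant compressing disc exists on each side, and equivariant surgery reduces to the unknotted embedding. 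This yields the general bound $|G|\le 4(g-1)$ for the knotted case, leaving only the signatures enumerated in the table as exceptions.

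\textbf{Constructions and main obstacle.} For each exceptional entry I would produce an explicit knotted realisation. The values $12(g-1)$ at $g\in\{9,11,121,241\}$ arise from tetrahedral/octahedral actions on $S^3$ preserving an invariant surface whose complementary pieces are Seifert fibred but not handlebodies; $2400$ at $g=361$ comes from an icosahedral extension; $6(g-1)$ at $g\in\{2,3,4,5,21,25,97,481\}$ comes from dihedral-type actions on a branched cover of an invariant torus. The generic bound $4(g-1)$ is realised by a cyclic action in a neighbourhood of a $G$-invariant torus knot. The main obstacle, and the bulk of the work, is the converse direction: for each $g$ outside the table, systematically ruling out every extendable action whose quotient signature would give $|G|>4(g-1)$, by showing that such an action must actually be realised by the unknotted embedding (and is therefore already recorded in Theorem~\ref{OES}). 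This case-by-case verification, parallel to the analysis carried out in~\cite{WWZZ} for the unknotted problem, is the technical heart of the proof.
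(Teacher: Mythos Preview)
Your proposal has a concrete error in the identification of the relevant quotient signatures. You list triangle signatures $(0;2,3,7)$, $(0;2,3,8)$, etc., but these cannot occur here: if $\Sigma_g/G$ had underlying space $S^2$ with at most three singular points, it would be a spherical $2$-orbifold and (by the equivariant Dehn lemma, Lemma~\ref{4 singular pt}(1)) would bound a discal $3$-orbifold in $S^3/G$, forcing $\Sigma_g$ itself to be a sphere. Proposition~\ref{bound handlebody} shows that for $|G|>4(g-1)$ the quotient must have \emph{exactly four} singular points, of one of the types $(2,2,2,n)$, $(2,2,3,3)$, $(2,2,3,4)$, $(2,2,3,5)$ (Lemma~\ref{singular type}). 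So the numerology you set up is off from the start, and the heuristic ``equivariant surgery reduces to the unknotted embedding'' does not go through: compressibility in $S^3/G$ only guarantees a handlebody orbifold on \emph{one} side (Lemma~\ref{4 singular pt}(2)), not both.

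More fundamentally, the paper does not derive Theorem~\ref{OEK} from Theorems~\ref{OE} and~\ref{OES}; all three are read off from the finer classification Theorem~\ref{main table}, which records for every action with $|G|>4(g-1)$ whether it is realised knotted, unknotted, or both. That classification (Theorem~\ref{classify}) is obtained by a method you do not mention: one restricts to spherical $3$-orbifolds with $|S^3/G|\cong S^3$ (Lemma~\ref{2sphere}), invokes Dunbar's explicit list of such orbifolds, and then searches each singular graph for ``allowable'' edges and dashed arcs whose regular neighbourhood boundary lifts to a connected $\Sigma_g$. Knottedness is decided orbifold by orbifold, using $\pi_1$-surjectivity checks (edge-killing, coset enumeration) rather than a uniform surgery argument. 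The lower bound $OE^k_g\ge 4(g-1)$ comes from an explicit dihedral (not cyclic) construction, Example~7.2. Your outline would need to replace the triangle-signature analysis by this four-point/Dunbar-list machinery to become a viable proof.
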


In fact, we will do something more. We will classify all the finite
group actions with order larger than $4(g-1)$. And the statements
above can be obtained directly from the following theorem.

\begin{theorem}\label{main table}
For an extendable finite group action $G$, if $|G|>4(g-1)$, all
possible relations between $|G|$ and $g$ are listed in the following
table. The foot index `$k$' means the action is realized only for a
knotted embedding, `$uk$' means the action can be realized for both
unknotted and knotted embeddings. If the action is realized only for
an unknotted embedding, there is no foot index.

\vskip 0.1true cm

\begin{tabular}{|c|c|}
\hline $|G|$ & $g$\\\hline $12(g-1)$ & $2, 3, 4, 5, 6, 9_{uk},
11_{uk}, 17 ,25, 97, 121_{uk}, 241_{uk}, 601$\\\hline $8(g-1)$ & $3,
7, 9, 49, 73$\\\hline $20(g-1)/3$ & $4, 16, 19, 361_{uk}$\\\hline
$6(g-1)$ I & $2, 3, 4, 5, 9_{uk}, 11, 17, 25, 97, 121_{uk},
241_{uk}$\\\hline $6(g-1)$ II & $\{2, 3, 4, 5, 9, 11, 25, 97, 121,
241\}_{uk}, 21_k, 481_k$\\\hline $24(g-1)/5$ & $6, 11, 41,
121$\\\hline $30(g-1)/7$ & $ 8, 29, 841, 1681$\\\hline
$4n(g-1)/(n-2)$ & $n-1, (n-1)^2$\\\hline
\end{tabular}

\vspace{5pt}
\end{theorem}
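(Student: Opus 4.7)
The argument combines the Riemann--Hurwitz formula, the classification of finite subgroups of $SO(4)$, and explicit equivariant constructions, extending the unknotted analysis of \cite{WWZZ} to cover the knotted case as well.

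First, by the geometrization of finite group actions in dimension~$3$, every extendable action gives an orthogonal $G\subset SO(4)$ preserving the closed surface $\Sigma_g$. Writing the signature of the quotient orbifold $\Sigma_g/G$ as $(g_0; n_1,\dots,n_k)$, Riemann--Hurwitz reads
\[
\frac{2(g-1)}{|G|} \;=\; (2g_0-2) + \sum_{i=1}^k\!\Bigl(1-\tfrac{1}{n_i}\Bigr),
\]
and the hypothesis $|G|>4(g-1)$ forces this quantity to be less than $1/2$. Since any $g_0\ge 1$ term overshoots, the signature must have $g_0=0$ and at most $k=4$ cone points; finite orbifold arithmetic then produces a short explicit list of candidate triangle signatures $(0;n_1,n_2,n_3)$ and four-cone signatures $(0;2,2,n_3,n_4)$, each determining a definite value of $|G|/(g-1)$. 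The coefficients $12,\,8,\,20/3,\,6,\,24/5,\,30/7$ and the family $4n/(n-2)$ appearing in the table are exactly those arising from signatures that will survive the next step.

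Second, I invoke the classification of finite subgroups of $SO(4)$ via the double cover $S^3\times S^3\to SO(4)$ and Goursat's lemma. For each candidate signature $(0;n_1,\dots,n_k)$ I enumerate the groups $G\subset SO(4)$ generated by rotations of the prescribed orders whose product is~$1$, and check whether the canonical surjection $\pi_1^{orb}(\Sigma_g/G)\twoheadrightarrow G$ has kernel a surface group of the specified genus~$g$. This surface-kernel compatibility, together with the divisibility structure of $SO(4)$-subgroups, pins down the sporadic values of $g$ listed in each row, along with the infinite family $g=n-1$ or $g=(n-1)^2$. As a representative elimination, the Hurwitz signature $(0;2,3,7)$ is ruled out because no finite quotient of $\Delta(2,3,7)$ embeds in $SO(4)$, so the ratio~$84$ never appears; analogous obstructions kill every ratio strictly greater than~$12$.

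Third, for each surviving pair $(G,\text{signature})$ I exhibit an explicit $G$-invariant embedded surface $\Sigma_g\subset S^3$. The basic constructions are: (i) tubular neighborhood boundaries of $G$-orbits of arcs in $S^3$, producing the generic unknotted prismatic family $4n(g-1)/(n-2)$; (ii) $G$-equivariant Heegaard surfaces separating $S^3$ into two handlebodies which $G$ either fixes setwise or swaps, producing the unknotted sporadic embeddings; and (iii) equivariant tubings of $G$-invariant graphs with non-handlebody complement, producing the knotted exceptions at $g=21$ and $g=481$ (ratio $6(g-1)$). For each realization I decide knotted versus unknotted by testing whether both components of $S^3\setminus\Sigma_g$ are handlebodies; this yields the foot indices $_{uk}$ and $_k$ in the table.

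The main obstacle is the completeness step. Showing that the sporadic values of $g$ listed in each row are \emph{all} those admitting a surface-kernel epimorphism from the relevant triangle (or quadrangle) group into a finite subgroup of $SO(4)$ requires careful Sylow-theoretic bookkeeping of which rotation orders can coexist in such a subgroup, plus a divisibility analysis on the resulting kernel genus. The subtlety is compounded by the fact that distinct signatures can share the same ratio $|G|/(g-1)$ -- the two ``$6(g-1)$'' rows illustrate this -- so the classification must separate each equivariant embedding type before merging back into the final table.
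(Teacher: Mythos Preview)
Your outline takes a different route from the paper and has a genuine gap at the junction of steps~2 and~3.

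The paper does not enumerate subgroups of $SO(4)$ together with surface-kernel epimorphisms from Fuchsian signatures. It works entirely on the quotient side: an extendable action with $|G|>4(g-1)$ produces a 2-suborbifold $\mathcal{F}=\Sigma_g/G$ inside the spherical 3-orbifold $S^3/G$, with $|\mathcal{F}|\cong S^2$ and \emph{exactly} four singular points. Three or fewer cone points are impossible because any such $\mathcal{F}$ in $S^3/G$ is forced to be spherical and to bound a discal 3-orbifold (this is a compressibility argument using the equivariant Dehn lemma), giving $g\le 1$; so triangle signatures never enter, and your proposed group-theoretic elimination of $(2,3,7)$ etc.\ is both unnecessary and unverified. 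Since $|\mathcal{F}|\cong S^2$ one gets $|S^3/G|\cong S^3$, placing $S^3/G$ on Dunbar's finite list of spherical 3-orbifolds. The problem then becomes: in each listed orbifold, find every 2-sphere meeting the singular graph in four points of an admissible type, with $\pi_1$-surjective inclusion. Because such a sphere bounds a handlebody orbifold of known shape (a neighbourhood of a singular edge or a dashed arc), this is a concrete search over edges and arcs, carried out case by case with $\pi_1$-surjectivity checked by an edge-killing trick or by coset enumeration in GAP.

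The gap in your scheme is that a surface-kernel epimorphism $\pi_1^{orb}(0;n_1,n_2,n_3,n_4)\twoheadrightarrow G\subset SO(4)$ yields an abstract $G$-action on $\Sigma_g$ and a $G$-action on $S^3$, but \emph{not} an equivariant embedding $\Sigma_g\hookrightarrow S^3$; the latter is exactly a 2-suborbifold of $S^3/G$, and whether one exists with the prescribed local data depends on the singular graph of $S^3/G$, not merely on $G$. So your step~2 can over-count pairs $(|G|,g)$, and step~3 only certifies the particular embeddings you build---it does not show that every pair surviving step~2 actually embeds, nor that you have found all embeddings for a given pair. The foot indices $k$ and $uk$ require precisely the latter: one must know, for each $(|G|,g)$, whether every realization is unknotted, every realization is knotted, or both occur. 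That is what the paper's exhaustive suborbifold search delivers and what your proposal leaves unaddressed.
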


Here the $6(g-1)$ case contains two types, ``I" and ``II", we will
explain them in the next section.

Then some interesting phenomena appear: As expected, for all $g$
with finitely many exceptions we have $OE^u_g>OE^k_g$; indeed there
are only finitely many $g$ such that  $OE^u_g = OE^k_g$ and, a little bit
surprising, $OE^u_g < OE^k_g$ when $g = 21$ or $481$. Also for some
$g$, $OE^k_g = 12(g-1)$.

Our approach relies on the orbifold theory which is founded and
studied in \cite{Th}, \cite{Du1}, \cite{Du2}, \cite{BMP} and
\cite{MMZ}. More precisely, the proof of our main results translates
into the problem of finding the so-called {\it allowable}
2-orbifolds (Definition \ref{allowable}) in certain spherical
3-orbifolds. The strategy of such an approach will be given in
Section \ref{Strategy}.

In Section \ref{primary facts}, after introducing some basic notions
about orbifolds and finite group actions on manifolds, we present
sequences of observations concerning the orbifold pair $(S^3,
\Sigma_g)/G$ on both the topological level and the group theoretical
level which are very useful for our later approach. In Section
\ref{dunbar} we will describe Dunbar's list of spherical 3-orbifolds
whose underlying space is $S^3$. With the material prepared in
Sections \ref{primary facts} and \ref{dunbar}, we will be able to
explain why we can transfer the problem of finding $OE_g$ into the
problem of finding allowable 2-orbifolds in certain spherical
3-orbifolds  and,  more importantly, to outline how to get a
practical method  to find such 2-orbifolds. (Some people may prefer
just read the definitions in Section \ref{Strategy} and skip the
remaining part). In Section \ref{list 3-orbifolds} we will give the
list of 3-orbifolds containing allowable 2-suborbifolds which turns
out to be a small subset of Dunbar's list where the singular sets
are relatively simple. In Section \ref{2-orbifolds}, we will find
all allowable 2-orbifolds in the list of 3-orbifolds  provided by
Section \ref{list 3-orbifolds}, and then the main results are
derived. We end the paper by some examples.

\bigskip\noindent\textbf{Acknowledgement}. We thank the referee for his sensitivity on orbifold theory,
his aesthetic view on the structure of the paper, which enhance the paper significantly.
The first three authors are supported by grant No.11371034 of the
National Natural Science Foundation of China and Beijing
International Center for Mathematical Research.


\section{Orbifolds and finite group actions}\label{primary facts}

The orbifolds we consider have the form $M/H$, where $M$ is a
n-manifold and $H$ is a finite group acting faithfully and smoothly on
$M$. For each point $x\in M$, denote its stable subgroup by $St(x)$,
its image in $M/H$ by $x'$. If $|St(x)|>1$, $x'$ is called a
singular point and the singular index is $|St(x)|$, otherwise it is
called a regular point. If we forget the singular set we get a
topological space $|M/H|$ which is called underlying space. $M/H$ is
orientable if $M$ is orientable and $H$ preserves the orientation;
$M/H$ is connected if $|M/H|$ is connected.

We can also define covering spaces and fundamental group for an
orbifold. There is a one to one correspondence between orbifold
covering spaces and conjugacy classes of subgroup of the orbifold
fundamental group, and regular covering spaces correspond to normal
subgroups. A Van-Kampen theorem is also valid, see \cite{BMP}. In
the following, covering space or fundamental group refers always to
the orbifold setting.

\begin{definition}
A discal orbifold (spherical orbifold) has the form $B^n/H$
$(S^n/H)$, where $B^n$ $(S^n)$ is the $n$-dimension ball (sphere)
and $H$ a finite group acting orientation-preservingly on the
corresponding manifold. A handlebody orbifold has the form $V_g/H$.
\end{definition}

By a classical result for topological actions, $|B^2/H|$ is a disk,
possibly with one singular point. $SO(3)$ contains only five
classes of finite subgroups: the order $n$ cyclic group $C_n$, the
order $2n$ dihedral group $D_n$, the order 12 tetrahedral group $T$,
the order 24 octahedral group $O$, and the order 60 icosahedral
group $J$. It can be shown that $B^3/H$ ($S^2/H$) belongs to one of the
following five models. The underlying space $|B^3/H|$ ($|S^2/H|$) is
the 3- ball (the 2-sphere).

\begin{center}
\scalebox{0.6}{\includegraphics*[0pt,0pt][574pt,130pt]{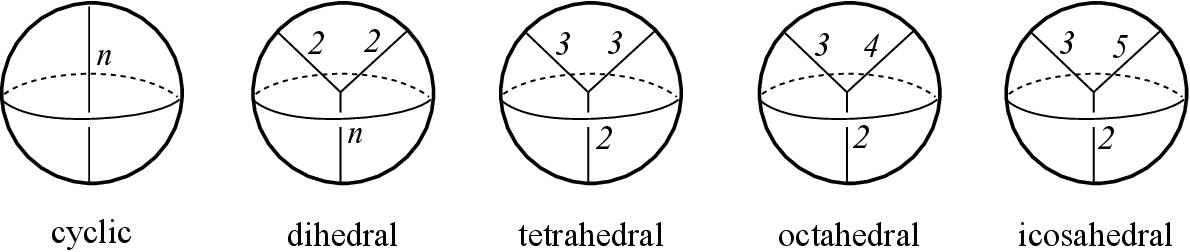}}

Figure 1
\end{center}

By the equivariant Dehn's lemma, see \cite{MY}, it can be shown that
a handlebody orbifold is the result of gluing finitely many 3-discal
orbifolds along some 2-discal orbifolds. And such gluing respecting
orientations always gives us a handlebody orbifold.

Like in the manifold case we can say that an orientable  seperating
2-suborbifold $\mathcal{F}$ in an orientable 3-orbifold
$\mathcal{O}$ is unknotted or knotted, depending on whether it
bounds handlebody orbifolds on both sides.

It is easy to see that if the underlying space of a handlebody
orbifold is a ball, then the singular set would form an unknotted
tree in the ball, possibly disconnected. Unknotted means that
the tree lies in a proper disk embedded in the ball. For more about handlebody orbifold theory one can see
\cite{MMZ}.

Suppose the action of $G$ on $\Sigma_g$ is extendable w.r.t. some
embedding $e: \Sigma_g\hookrightarrow S^3$; let $\tilde\Gamma=\{x\in
S^3 \mid \exists \,g\in G, g\neq id, s.t.\,gx=x\}$. As locally there
are only five kinds of model, $\tilde \Gamma$ is a graph, possibly
disconnected, and $S^3/G$ is a 3-orbifold whose singular set
$\Gamma=\tilde \Gamma/G$ is a trivalent graph. Each edge of $\Gamma$
can be labeled by an integer $n>1$ which indicates its singular
index. At each vertex the labels $m$, $q$, $r$ of the three adjacent
edges should satisfy $1/m+1/q+1/r > 1$. The 2-orbifold $\Sigma_g/G$
maps to the 2-suborbifold $e(\Sigma_g)/G$ whose singular set
$e(\Sigma_g)/G\cap \Gamma$ consists of isolated points.

We then have an orbifold covering  $p: S^3\rightarrow S^3/G$ and an
orbifold embedding $e/G: \Sigma_g/G\hookrightarrow S^3/G$.
Conversely, if we have an orbifold embedding from a 2-orbifold to a
spherical orbifold and the preimage of the 2-suborbifold in $S^3$ is
connected then we find an extendable action of $G$ on some surface
with respect to some embedding.

\begin{definition}
An orientable 2-suborbifold $\mathcal{F}$ in an orientable
3-orbifold $\mathcal{O}$ is compressible if either $\mathcal{F}$ is
spherical and bounds a discal 3-suborbifold in $\mathcal{O}$, or
there is a simple closed curve in $\mathcal{F}$ (not meeting the
singular set) which bounds a discal 2-orbifold in $\mathcal{O}$, but
does not bound a discal 2-orbifold in $\mathcal{F}$. Otherwise
$\mathcal{F}$ is called incompressible.
\end{definition}

\begin{lemma}\label{compressing}
Any orientable 2-suborbifold $\mathcal{F}$ in a spherical orbifold
$S^3/G$ is compressible.
\end{lemma}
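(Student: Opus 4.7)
The strategy is to lift $\mathcal{F}$ to the universal cover $S^3$ of the spherical orbifold $S^3/G$ via the orbifold covering $\pi : S^3 \to S^3/G$, and to exploit the classical fact that $S^3$ contains no closed incompressible surfaces. Set $\tilde{\mathcal{F}} := \pi^{-1}(\mathcal{F})$. Since $\mathcal{F}$ is a suborbifold and the total space $S^3$ is a manifold, $\tilde{\mathcal{F}}$ is a $G$-invariant closed orientable surface embedded in $S^3$, possibly disconnected. Orientability of $\mathcal{F}$ as an orbifold forces $G$ to act orientation-preservingly on $\tilde{\mathcal{F}}$, hence for any component $F_0 \subset \tilde{\mathcal{F}}$ the setwise stabilizer $G_{F_0}$ preserves each of the two sides of $F_0$ in $S^3$ (by Jordan-Brouwer applied to an oriented separating surface).

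I then split into cases according to the orbifold type of $\mathcal{F}$. If $\mathcal{F}$ is spherical, its universal cover is $S^2$, so every component of $\tilde{\mathcal{F}}$ is a 2-sphere. Choose an innermost such component $S$, bounding a 3-ball $B \subset S^3$ whose interior is disjoint from $\tilde{\mathcal{F}}$. By the orientation remark, $G_S$ preserves $B$, so $B/G_S$ is a discal 3-orbifold. Finiteness of $G$ rules out any $g \notin G_S$ mapping $B$ to a ball strictly containing $B$ (iterating would yield $B \subsetneq gB \subsetneq g^2B \subsetneq \cdots$, contradicting $g^{\mathrm{ord}(g)} = e$), while the innermost choice rules out $gB \subsetneq B$. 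Hence distinct $G$-translates of $B$ are either equal or disjoint, so the induced map $B/G_S \to S^3/G$ is an orbifold embedding with boundary $\mathcal{F}$, exhibiting $\mathcal{F}$ as bounding a discal 3-suborbifold. If $\mathcal{F}$ is not spherical (Euclidean or hyperbolic), each component of $\tilde{\mathcal{F}}$ has positive genus and is therefore compressible in $S^3$ by standard 3-manifold topology. The equivariant Dehn/loop lemma of Meeks-Yau \cite{MY} then produces a compressing disk $D$ satisfying $gD = D$ or $gD \cap D = \emptyset$ for every $g \in G$. The projection $\pi(D)$ is a 2-discal suborbifold of $S^3/G$ whose boundary is a simple closed curve on $\mathcal{F}$ avoiding the singular locus and essential on $\mathcal{F}$, giving the required compression.

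The principal obstacle is the equivariant bookkeeping needed to descend a compressing object in $S^3$ to an honest orbifold disk (or ball) in $S^3/G$. In the spherical case the essential ingredients are orientation-preservation (which prevents $G_S$ from swapping the two sides of $S$ and hence makes $B/G_S$ well-defined as a discal 3-orbifold) together with the innermost-plus-finite-order dichotomy just described. In the non-spherical case the whole subtlety is concentrated in the invocation of Meeks-Yau: the equivariance condition on $D$ is exactly what makes $\pi(D)$ a genuine 2-discal orbifold, and its boundary lies on $\mathcal{F}$ away from the singular set of $S^3/G$ because $\mathcal{F}$ meets that singular set only in isolated points, which can be avoided by a small isotopy of $\partial D$ in $\tilde{\mathcal{F}}$.
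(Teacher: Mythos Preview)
Your proof is correct and follows essentially the same strategy as the paper's: lift to $S^3$, split into the spherical versus non-spherical cases, and invoke the equivariant Dehn lemma \cite{MY} in the latter. The only notable difference is in the spherical case, where the paper first observes that $\mathcal{F}$ separates $S^3/G$ (using that $\pi_1(|S^3/G|)$ is finite) so that one complementary region upstairs is a ball and its quotient is automatically discal, whereas you run an innermost-ball argument together with a finite-order iteration to control the $G$-translates of $B$; both routes are standard and yield the same conclusion.
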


\begin{proof}
$|\mathcal{F}|$ is two sided in $|S^3/G|$. Since $\pi_1(S^3/G)=G$ is
finite, $\pi_1(|S^3/G|)$ is also finite. Hence $\mathcal{F}$ cuts
$S^3/G$ into two parts $\mathcal{O}_1$, $\mathcal{O}_2$, and
$p^{-1}(\mathcal{F})$ cuts $S^3$ into several components $M_1, M_2,
\cdots, M_k$, each of which will be mapped by $p$ to one of the two
parts, the components have common boundary will be mapped to
different part.

If $\mathcal{F}$ is spherical, $p^{-1}(\mathcal{F})$ is a disjoint
union of 2-spheres. By the irreducibility of $S^3$ and $B^3$, one
$M_i$ must be a ball, hence one $\mathcal{O}_i$ is a discal
3-suborbifold and we have the result by definition.

Otherwise, $F = p^{-1}(\mathcal{F})$ is a disjoint union of
homeomorphic closed surfaces in $S^3$ of genus $g\ge 1$. Since $F$
is compressible in $S^3$ we can find an innermost compressing disk
$D$. Suppose $D$ is in $M_i$. By the equivariant Dehn's Lemma we can
find equivariant compressing disks in $M_i$. Suppose one of them is
$D'$, then all the images of $D'$ under the $G$ action will be
disjoint in $S^3$. Then it gives a `compressing disk' of
$\mathcal{F}$ in $S^3/G$.
\end{proof}

\begin{lemma}\label{4 singular pt}
Suppose $\mathcal{F}$ is a 2-suborbifold of a spherical orbifold
$S^3/G$ and $|\mathcal{F}|$ is homeomorphic to $S^2$.

(1) If $\mathcal{F}$ has not more than three singular points then
$\mathcal{F}$ is spherical and bounds a discal 3-orbifold.

(2) If $\mathcal{F}$ has precisely four singular points and the preimage of $\mathcal{F}$ in $S^3$ is connected, then
$\mathcal{F}$ bounds a handlebody orbifold in $S^3/G$.
\end{lemma}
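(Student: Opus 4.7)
\emph{Part (1).} The plan is to apply Lemma \ref{compressing} and then rule out one of the two clauses in the definition of compressibility. By Lemma \ref{compressing}, $\mathcal{F}$ is compressible, meaning either (a) $\mathcal{F}$ is spherical and already bounds a discal 3-suborbifold (which is exactly the desired conclusion), or (b) there is a compressing 2-discal $D$ whose boundary curve $\partial D \subset \mathcal{F}$ avoids the singular set but does not bound a 2-discal orbifold in $\mathcal{F}$. In case (b), $\partial D$ separates $|\mathcal{F}| \cong S^2$ into two disks; since any disk in $\mathcal{F}$ with at most one cone point is itself a 2-discal suborbifold of $\mathcal{F}$, and $\partial D$ is assumed not to bound a 2-discal in $\mathcal{F}$, each of the two disks must contain at least two singular points. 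This forces $\mathcal{F}$ to carry at least four singular points, contradicting the hypothesis of part (1). Hence (a) holds.

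\emph{Part (2).} Invoke Lemma \ref{compressing} again. If clause (a) of compressibility holds, $\mathcal{F}$ bounds a discal 3-suborbifold, which is a genus-zero handlebody orbifold, and we are done. Otherwise, we obtain a compressing 2-discal $D$, and the cone-point counting of part (1) now forces $\partial D$ to split the four singular points of $\mathcal{F}$ as $2+2$ across the two disks of $|\mathcal{F}| \setminus \partial D$. Let $\mathcal{O}_1$ denote the component of $S^3/G \setminus \mathcal{F}$ containing $D$ and take a thin regular neighborhood $N(D) \subset \mathcal{O}_1$. Compressing $\mathcal{F}$ along $D$ produces $\mathcal{F}_1 \sqcup \mathcal{F}_2 \subset S^3/G$, where each $|\mathcal{F}_i| \cong S^2$ carries the two inherited singular points together with at most one extra cone point contributed by a copy of $D$, hence at most three singular points in total. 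By part (1) each $\mathcal{F}_i$ is spherical. The plan is to show that the two components of $\mathcal{O}_1 \setminus N(D)$, bounded respectively by $\mathcal{F}_1$ and $\mathcal{F}_2$, are themselves discal 3-suborbifolds; then $\mathcal{O}_1$ is exhibited as two 3-discal orbifolds glued to $N(D)$ along the two copies of the 2-discal $D$ in $\partial N(D)$, and by the description of handlebody orbifolds recalled just before Lemma \ref{compressing} this identifies $\mathcal{O}_1$ as a handlebody orbifold with boundary $\mathcal{F}$.

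\emph{Key verification and main obstacle.} To see that each component of $\mathcal{O}_1 \setminus N(D)$ is discal, I pass to the cover $p \colon S^3 \to S^3/G$: writing $F = p^{-1}(\mathcal{F})$, $\tilde D = p^{-1}(D)$, and $W_1 = p^{-1}(\mathcal{O}_1)$, the family $\tilde D$ consists of $G$-equivariantly disjoint disks properly embedded in the 3-manifold $W_1$, and the surface obtained by compressing $F$ along $\tilde D$ is exactly $F' = p^{-1}(\mathcal{F}_1 \sqcup \mathcal{F}_2)$, which is a disjoint union of 2-spheres in $S^3$ by the sphericality of each $\mathcal{F}_i$. Thus $W_1 \setminus N(\tilde D)$ is a $G$-invariant compact 3-submanifold of $S^3$ bounded entirely by 2-spheres; Alexander's theorem together with an equivariant Schoenflies argument (in the same spirit as the equivariant Dehn lemma step used in the proof of Lemma \ref{compressing}) should then show that each of its components is a genuine 3-ball, whose quotient by its stabilizer is a discal 3-orbifold. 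This final equivariant ball-decomposition is where I expect the main obstacle to lie: one has to verify that $\tilde D$ cuts $W_1$ into pieces each of which has a single spherical boundary component, so that they are honest balls rather than ``balls with holes''. Everything else in the argument is structural and reduces cleanly to part (1) plus the definition of compressibility.
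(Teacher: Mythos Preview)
Your argument for part~(1) is correct and coincides with the paper's: any simple closed curve on $|\mathcal{F}|\cong S^2$ missing the singular set separates the at most three cone points as $0+3$, $1+2$, etc., so one side is always a $2$-discal suborbifold of $\mathcal{F}$; hence clause~(b) of compressibility is impossible and clause~(a) gives the conclusion.

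For part~(2) there is a genuine gap, and it is exactly the spot you flagged as the ``main obstacle''. You fix $\mathcal{O}_1$ to be the side containing the compressing discal $D$, cut $\mathcal{O}_1$ along $D$ into $\mathcal{P}_1,\mathcal{P}_2$ with $\partial\mathcal{P}_i=\mathcal{F}_i$, and then assert that each $\mathcal{P}_i$ is a discal $3$-orbifold. But part~(1) only tells you that $\mathcal{F}_i$ bounds a discal $3$-orbifold on \emph{some} side; it does not say that side is $\mathcal{P}_i$. When the discal orbifold $V$ bounded by $\mathcal{F}_1=D_1\cup D$ lies on the \emph{other} side, i.e.\ $V=\overline{(S^3/G)\setminus\mathcal{P}_1}$, then $\mathcal{P}_1$ need not be discal at all: its orbifold fundamental group surjects onto $G$ (by Van~Kampen, since its complement is discal), and $|G|$ is typically much larger than $|\pi_1(\mathcal{F}_1)|$. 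In the lifted picture this is precisely why components of $W_1\setminus N(\tilde D)$ can have several spherical boundary components; no amount of equivariant Schoenflies will force them to be balls, because they simply are not.

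The paper's proof avoids this by not insisting that the handlebody orbifold be $\mathcal{O}_1$. It applies part~(1) to the spheres $D_1\cup D$ and $D_2\cup D$ and then splits into two cases. If both discal $3$-orbifolds lie on the $\mathcal{P}_i$ sides, one glues them along $D$ and obtains $\mathcal{O}_1$ as the handlebody orbifold (this is your argument). If instead, say, the discal $3$-orbifold $V$ bounded by $D_1\cup D$ contains the interior of $D_2$, then $D_2$ sits inside the model $B^3/H$ for $V$; the region of $V$ cut off by $D_2$ and bounded by $\mathcal{F}=D_1\cup D_2$ is then seen directly (inside one of the five models of Figure~1) to be a handlebody orbifold whose singular set consists of two arcs. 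In this second case the handlebody orbifold is $\mathcal{O}_2$, not $\mathcal{O}_1$. So the missing idea in your plan is this case distinction: rather than proving the \emph{specific} side $\mathcal{O}_1$ is a handlebody orbifold (which can fail), you must allow the discal pieces to fall on either side and locate the handlebody orbifold accordingly.
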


\begin{proof}
As a 2-suborbifold, $\mathcal{F}$ should be spherical or has
`compressing disk' by Lemma \ref{compressing}.

(1) If $\mathcal{F}$ has no more than three singular points, every
simple closed curve in $\mathcal{F}$ bounds a discal orbifold in
$\mathcal{F}$. So $\mathcal{F}$ has no `compressing disk' and hence
is spherical, and then bounds a discal 3-orbifold.

(2) If $\mathcal{F}$ has precisely four singular points,
$\mathcal{F}$ is not spherical and hence has a `compressing disk'
$D$. Then $\partial D$ separates $\mathcal{F}$ into two discal
orbifolds $D_1, D_2$, each of which contains two singular points.
Now $D_1\cup D$ and $D_2\cup D$ are 2-suborbifolds in $S^3/G$ each
of which contains no more than three singular points; by the above
argument each of them bounds a discal 3-orbifold.

There are two cases: the discal 3-orbifold bounded by $D_1\cup D$
($D_2\cup D$) does not intersect the interior of $D_2$ ($D_1$). Then
the two discal 3-orbifolds meet only along $D$, and the result is
clearly a handlebody orbifold; otherwise for example the discal
3-orbifold, say $V$, bounded by $D_1\cup D$ intersects the interior
of $D_2$. Then $D_2$ is contained in $V$. Since the preimage of $\mathcal{F}$ in $S^3$ is connect, the complement of $V$ is also a discal 3-orbifold, and we can replace $V$ by its complement. Then we will meet the previous case.
\end{proof}

\begin{proposition}\label{bound handlebody}
Suppose $G$ acts on $(S^3, \Sigma_g)$. If $|G|
> 4(g-1)$, then $\Sigma_g/G$ has underlying space $S^2$ with four
singular points and bounds a handlebody orbifold, and $\Sigma_g$
bounds a handlebody.

In conclusion $OE_g\le OH_g\le 12(g-1)$.
\end{proposition}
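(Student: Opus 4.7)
The plan is to combine the Riemann--Hurwitz formula for the branched cover $\Sigma_g \to \Sigma_g/G$ with Lemma \ref{4 singular pt} and an equivariant lifting argument, and then read off the stated inequality.

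\emph{Quotient structure.} Write $\Sigma_g/G$ as a closed 2-orbifold of underlying genus $h$ with cone orders $n_1,\dots,n_k$. Riemann--Hurwitz gives
$2(g-1) = |G|\bigl(2h-2+\sum_{i=1}^k (1-1/n_i)\bigr)$,
so the hypothesis $|G| > 4(g-1)$ forces $2h-2+\sum(1-1/n_i) < 1/2$. Since each $1-1/n_i \ge 1/2$ and the left side is strictly positive (because $\chi(\Sigma_g/G) = \chi(\Sigma_g)/|G| < 0$), a short case check forces $h=0$ and $k \in \{3,4\}$. Now Lemma \ref{4 singular pt}(1) would make a 2-suborbifold on $S^2$ with at most three singular points spherical, contradicting $\chi(\Sigma_g/G) < 0$; hence $k = 4$. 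Then Lemma \ref{4 singular pt}(2) gives a handlebody orbifold $\mathcal{V} \subset S^3/G$ with $\partial\mathcal{V} = \Sigma_g/G$.

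\emph{Lifting to $S^3$.} The next step is to show the $G$-invariant preimage $\bar{U} = p^{-1}(\mathcal{V})$ is a handlebody. Because $G$ preserves the orientations of both $S^3$ and $\Sigma_g$, no element of $G$ interchanges the two complementary components of $\Sigma_g$: such a swap would flip the normal direction of $\Sigma_g$ in $S^3$, and together with preservation of the ambient orientation would reverse the orientation of $\Sigma_g$. Hence each side of $\Sigma_g$ is $G$-invariant and $\bar{U}$ is one of them, with $\bar{U}/G = \mathcal{V}$. Recall (from the excerpt just before Lemma \ref{compressing}) that $\mathcal{V}$ is obtained by gluing finitely many 3-discal orbifolds along 2-discal orbifolds; I would lift this decomposition to $\bar{U}$ and note that because $\bar{U}$ is a manifold, the manifold covers of these discal pieces are honest 3-balls glued along 2-disks, exhibiting $\bar{U}$ as a handlebody. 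I expect this lifting to be the main technical point; one may alternatively argue from the fact that $\pi_1^{\mathrm{orb}}(\mathcal{V})$ is a free product of finite cyclic groups, so any torsion-free subgroup is free by Kurosh, and invoke the standard result that a compact orientable irreducible 3-manifold with free fundamental group and non-empty boundary is a handlebody.

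\emph{Conclusion.} The $G$-action on $(S^3,\Sigma_g)$ restricts to a faithful orientation-preserving action on the handlebody $\bar{U}\cong V_g$, giving $|G|\le OH_g$. Combined with Zimmermann's bound $OH_g\le 12(g-1)$ recalled in the introduction (the case $|G|\le 4(g-1)$ is automatic since $OH_g\ge 4(g+1)$), this yields $OE_g\le OH_g\le 12(g-1)$.
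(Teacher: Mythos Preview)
Your proof is correct and follows essentially the same route as the paper: Riemann--Hurwitz to force $|\Sigma_g/G|=S^2$ with four cone points, Lemma~\ref{4 singular pt} to get the handlebody orbifold, then lift and read off the inequality. Two minor differences worth noting: the paper simply asserts that $\Sigma_g$ bounds a handlebody once $\Sigma_g/G$ bounds a handlebody orbifold, whereas you actually justify this via the side-preservation argument and the discal decomposition (or the Kurosh/free-$\pi_1$ alternative), which is a genuine gap you have filled; and for the final inequality the paper invokes the lower bound $OE_g\ge 4(g+1)$ from Example~\ref{cage} to guarantee the maximizing action falls under the hypothesis, while you instead use $OH_g\ge 4(g+1)$ to handle the trivial case---both work.
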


\begin{proof}
$\Sigma_g/G$ is a 2-suborbifold in $S^3/G$ whose singular set
contains isolated points $a_{1},a_{2},\cdots,a_{k}$, with indices
$q_{1}\leq q_{2}\leq\cdots\leq q_{k}$. Note that $|S^3/G|$ and
$|\Sigma_g/G|$ are both manifolds. Suppose the genus of
$|\Sigma_g/G|$ is $\hat{g}$. By the Riemann-Hurwitz formula
$$2-2g=|G|(2-2\hat{g}-\sum_{i=1}^k (1-\frac{1}{q_i}))$$
we have
$$|G|=(2g-2)/(\sum_{i=1}^k (1-\frac{1}{q_i})+2\hat{g}-2).$$
If $\hat{g}\geq 1$ or $\hat{g}=0$, $k\geq 5$, then $|G|\leq 4g-4$.
Hence $\hat{g}=0$ and $k\leq 4$. If $k\leq 3$ then $\Sigma_g/G$
bounds a discal orbifold by Lemma \ref{4 singular pt} (1), which
leads to a contradiction (since $g>1$ by assumption). Hence $k=4$,
and by Lemma  \ref{4 singular pt} (2) $\Sigma_g/G$ bounds a
handlebody orbifold. In this case $\Sigma_g$ bounds a handlebody in
$S^3$.

By \cite{WWZZ}, or see Example \ref{cage}, $OE_g\ge 4(g+1)(>
4(g-1))$. Hence each $\Sigma_g$ in $S^3$ realizing $OE_g$ must
bounds a handlebody, therefore we have $OE_g\le OH_g\le 12(g-1)$.
\end{proof}

\begin{definition}
Let $\mathcal{F}$ be a 2-suborbifold in a spherical orbifold
$S^3/G$, with $|\mathcal{F}|$ homeomorphic to $S^2$ and has
precisely four singular points $a_1, a_2, a_3, a_4$. Supposing
$q_1\leq q_2\leq\ q_3\leq q_4$ for their indices, we call $(q_1,
q_2, q_3, q_4)$ the singular type of $\mathcal{F}$.
\end{definition}

\noindent Using the Riemann-Hurwitz formula, it is easy to see:

\begin{lemma}\label{singular type}
If $|G|>4(g-1)$ then the singular type of $\Sigma_g/G$ is one of
$(2, 2, 2, n)(n\ge 3)$, $(2, 2, 3, 3)$, $(2, 2, 3, 4)$, $(2, 2, 3,
5)$.
\end{lemma}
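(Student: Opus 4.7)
The plan is to combine Proposition \ref{bound handlebody} (which already forces $|\Sigma_g/G|=S^2$ with exactly four singular points when $|G|>4(g-1)$) with the Riemann--Hurwitz formula, and then reduce the question to a finite arithmetic search over quadruples of integers $q_1\le q_2\le q_3\le q_4$ with $q_i\ge 2$.

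First I would recall the equation derived in the proof of Proposition \ref{bound handlebody}. With $\hat g=0$ and $k=4$, Riemann--Hurwitz gives
\[
|G|\;=\;\frac{2g-2}{\sum_{i=1}^{4}\left(1-\tfrac{1}{q_i}\right)-2}.
\]
Imposing $|G|>4(g-1)=2(2g-2)$ is therefore equivalent to the purely numerical inequality
\[
\sum_{i=1}^{4}\Bigl(1-\tfrac{1}{q_i}\Bigr)-2\;<\;\tfrac{1}{2},
\qquad\text{i.e.}\qquad
\tfrac{1}{q_1}+\tfrac{1}{q_2}+\tfrac{1}{q_3}+\tfrac{1}{q_4}\;>\;\tfrac{3}{2}.
\]
So the whole content of the lemma is to enumerate quadruples $(q_1,q_2,q_3,q_4)$ of integers $\ge 2$ in non-decreasing order satisfying this inequality.

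Next I would run the enumeration by cases on $q_1$ and $q_2$. If $q_1\ge 3$ then $\sum 1/q_i\le 4/3<3/2$, so $q_1=2$. If $q_2\ge 4$ then $\sum 1/q_i\le 1/2+3/4<3/2$; if $q_2=3$ then $\sum 1/q_i\le 1/2+1/3+2/3=3/2$ with equality impossible strictly, ruling this out; hence $q_2=2$ and the inequality reduces to $1/q_3+1/q_4>1/2$. A further split on $q_3$ then gives exactly the cases $q_3=2$ (yielding $(2,2,2,n)$ for any $n\ge 2$) and $q_3=3$ (yielding $(2,2,3,3),(2,2,3,4),(2,2,3,5)$), while $q_3\ge 4$ is impossible.

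Finally I would discard the spurious case $(2,2,2,2)$: substituting it into the Riemann--Hurwitz equation gives $2g-2=0$, i.e.\ $g=1$, contradicting $g>1$. This leaves precisely the list stated in the lemma. The only slightly non-routine point is the sharp bookkeeping in the $q_2=3$ branch where the $3/2$ inequality is attained but not exceeded, and the elimination of $(2,2,2,2)$ via the genus hypothesis; apart from these, the proof is a short inequality check.
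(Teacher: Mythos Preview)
Your argument is correct and follows exactly the route the paper intends: the paper merely writes ``Using the Riemann--Hurwitz formula, it is easy to see'' without spelling out the enumeration, and your proof supplies precisely those details (reducing $|G|>4(g-1)$ to $\sum 1/q_i>3/2$, the case split on $q_1,q_2,q_3$, and discarding $(2,2,2,2)$ via $g>1$).
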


\begin{lemma}\label{compute genus}
The relation between the orders of extendable group actions and the
surface genus for a given singular type is given in the following
table:

\vspace{10pt}

\begin{tabular}{|c|c|c|c|c|}
\hline Type & $(2, 2, 2, n)(n\ge 3)$ & $(2, 2, 3, 3)$ & $(2, 2, 3,
4)$ & $(2, 2, 3, 5)$\\\hline Order &$4n(g-1)/(n-2)$ & $6(g-1)$ &
$24(g-1)/5$ & $30(g-1)/7$\\\hline
\end{tabular}

\vspace{10pt}

\end{lemma}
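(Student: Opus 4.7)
The plan is to apply the Riemann--Hurwitz formula exactly as in the proof of Proposition \ref{bound handlebody}, but now specialized to the four possible singular types listed in Lemma \ref{singular type}. By that proposition, whenever $|G|>4(g-1)$ we already know that $|\Sigma_g/G|\cong S^2$ (so $\hat g=0$) and the quotient has exactly $k=4$ cone points, with indices $(q_1,q_2,q_3,q_4)$. The Riemann--Hurwitz formula then collapses to
\[
|G|\;=\;\frac{2g-2}{\sum_{i=1}^{4}\bigl(1-\tfrac{1}{q_i}\bigr)-2}.
\]
So the only task is to evaluate the denominator $D(q_1,q_2,q_3,q_4):=\sum(1-1/q_i)-2$ for each of the four admissible types.

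I would then compute directly: for type $(2,2,2,n)$ one gets $D=\tfrac32-\tfrac{1}{n}-\tfrac12=\tfrac{n-2}{2n}$, yielding $|G|=\tfrac{4n(g-1)}{n-2}$; for $(2,2,3,3)$, $D=\tfrac13$ and $|G|=6(g-1)$; for $(2,2,3,4)$, $D=\tfrac{5}{12}$ and $|G|=\tfrac{24(g-1)}{5}$; for $(2,2,3,5)$, $D=\tfrac{7}{15}$ and $|G|=\tfrac{30(g-1)}{7}$. These are exactly the four entries in the table.

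There is no real obstacle here: the statement is a bookkeeping corollary of Riemann--Hurwitz once the structural information (underlying sphere with four cone points) established in Proposition \ref{bound handlebody} and Lemma \ref{singular type} is in hand. The only thing to be careful about is the sign/arrangement of the formula, since $D>0$ is needed for $|G|$ to be positive, which is guaranteed precisely because each of the four types is hyperbolic (i.e.\ $\sum(1-1/q_i)>2$), as was already used to list them in Lemma \ref{singular type}.
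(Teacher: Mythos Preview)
Your approach is exactly the paper's: the lemma is stated immediately after the phrase ``Using the Riemann--Hurwitz formula, it is easy to see,'' with no further argument, and your proposal just makes that computation explicit. One small slip to fix: in the $(2,2,2,n)$ case your intermediate expression $\tfrac32-\tfrac1n-\tfrac12$ equals $1-\tfrac1n$, not $\tfrac{n-2}{2n}$; the correct arithmetic is $D=2-(\tfrac12+\tfrac12+\tfrac12+\tfrac1n)=\tfrac12-\tfrac1n=\tfrac{n-2}{2n}$, and then your stated value of $|G|$ is right.
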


\begin{lemma}\label{handlebody shape}
If the singular type of $\Sigma_g/G$ is not $(2, 2, 3, 3)$, the
handlebody orbifold bounded by $\Sigma_g/G$ is as in Figure 2(a); if
the singular type is $(2, 2, 3, 3)$, there are the two possibilities
in Figure 2(a) and (b) for this handlebody orbifold.
\end{lemma}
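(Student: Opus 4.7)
The plan is to reduce the problem to a purely combinatorial question about trivalent trees with four leaves and then check each of the four singular types from Lemma \ref{singular type} case by case.

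First, by Proposition \ref{bound handlebody}, $\Sigma_g/G$ bounds a handlebody orbifold $\mathcal{V}$ in $S^3/G$. Since $|\Sigma_g/G|\cong S^2$, the underlying space $|\mathcal{V}|$ is a $3$-ball, and by the description of handlebody orbifolds in Section \ref{primary facts} its singular set $\Gamma_{\mathcal{V}}$ is an unknotted (possibly disconnected) tree in $|\mathcal{V}|$ whose four leaves on $\partial\mathcal{V}$ are exactly the four singular points of $\Sigma_g/G$ (inheriting their indices), and whose interior vertices are all trivalent with labels $m_1,m_2,m_3$ satisfying $1/m_1+1/m_2+1/m_3>1$.

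Next I would enumerate the combinatorial shapes of $\Gamma_{\mathcal{V}}$. A short Euler-characteristic count on a trivalent tree with four degree-$1$ leaves shows that a connected such tree must have exactly two interior vertices joined by a single edge---an ``H-shape''---while a disconnected such tree must consist of two disjoint arcs. These are the only possibilities. I would then match each shape against each of the four singular types $(2,2,2,n),(2,2,3,3),(2,2,3,4),(2,2,3,5)$. For the two-arcs shape, each arc carries a single index, so the four leaf labels must pair as $(a,a,b,b)$; inspection shows this happens only for $(2,2,3,3)$, via the pairing $\{2,2\}\cup\{3,3\}$. For the H-shape, the two leaf labels together with the middle-edge label at each interior vertex must form one of the admissible triples $\{2,2,n\},\{2,3,3\},\{2,3,4\},\{2,3,5\}$, and every one of the four singular types admits such a labeling, which is the one depicted in Figure 2(a). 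Assembling these conclusions: for singular type $(2,2,3,3)$ both shapes occur, giving Figures 2(a) and 2(b); for every other type only the H-shape of Figure 2(a) survives.

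The main obstacle I anticipate is pinning down the H-shape precisely---a priori several different labelings of the middle edge can be consistent with the vertex condition (for instance, for type $(2,2,3,3)$ with partition $\{2,3\}\cup\{2,3\}$ the middle label $m$ could be $2,3,4$ or $5$), and one must confirm that the specific labeling drawn in Figure 2(a) is the one actually realized as $V_g/H$ for a genuine finite-group action on a handlebody. This step will draw on the classification of handlebody orbifolds from \cite{MMZ}, and on the consistency constraint that $\Gamma_{\mathcal{V}}$ must extend inside $S^3/G$ to an edge of one of the spherical orbifolds in Dunbar's list (Section \ref{dunbar}).
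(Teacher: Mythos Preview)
Your argument is correct and matches the paper's: both reduce to the dichotomy ``H-tree versus two disjoint arcs'' for the singular set (the paper extracts this directly from the constructive proof of Lemma~\ref{4 singular pt}, while you obtain it from an Euler-characteristic count on trivalent trees with four leaves), and both then observe that a single arc carries a single index, so the two-arc shape forces leaf labels of the form $\{a,a,b,b\}$, which among the types in Lemma~\ref{singular type} occurs only for $(2,2,3,3)$.

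The obstacle you anticipate in your last paragraph, however, is not real. Lemma~\ref{handlebody shape} asserts only the \emph{shape} of $\Gamma_{\mathcal{V}}$, not a particular label on the middle edge; Figure~2(a) is schematic, and the remark following the lemma simply says that $\mathcal{V}$ is a regular neighbourhood of that middle edge. Since $\mathcal{V}$ already exists by Proposition~\ref{bound handlebody} and you have shown its singular set must be one of the two enumerated shapes, ruling out the two-arc case for types $\ne(2,2,3,3)$ completes the proof---there is no separate ``realization'' question, and no appeal to \cite{MMZ} or to Dunbar's list is needed here. The actual index on the middle edge is determined only later, in Section~\ref{list 3-orbifolds}, from how $\mathcal{V}$ sits inside $S^3/G$, so you should simply drop that paragraph.
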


\begin{center}
\scalebox{0.5}{\includegraphics*[0pt,0pt][668pt,212pt]{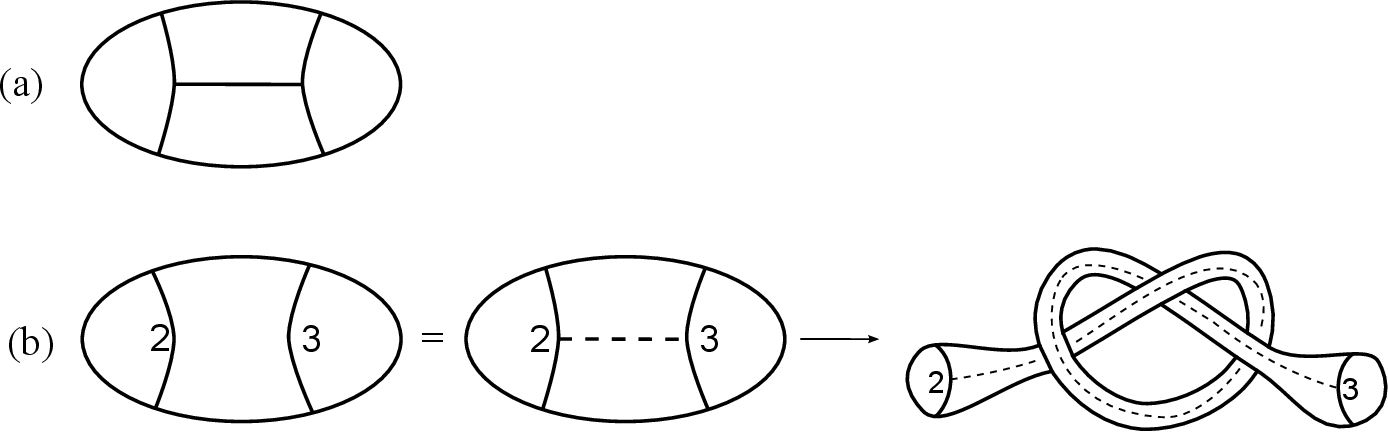}}

Figure 2
\end{center}

\begin{proof}
By the proof of Lemma \ref{4 singular pt}, the handlebody orbifold
bounded by $\Sigma_g/G$ has underlying space $B^3$ and singular set
a tree like in Figure 2(a) or two arcs. The indices of the end
points of an arc must be the same. Hence if the singular set
contains two arcs, the singular type must be $(2, 2, 3, 3)$.
\end{proof}

Note that in the case of Figure 2(a) the handlebody orbifold is a
regular neighborhood of a singular edge. In the case of Figure 2(b)
the handlebody orbifold is a regular neighborhood of a dashed arc,
and this dashed arc can be locally knotted as in the figure. We can
also say that a singular edge/dashed arc is unknotted or knotted,
depending on whether the boundary of its regular neighborhood is
unknotted or knotted.

\begin{lemma}\label{connected}
Suppose a finite group $G$ acts on $(M, F)$, where $M$ is a
3-manifold, with a surface embedding $i: F\hookrightarrow M$, so we
have diagrams:
$$\xymatrix{
  F \ar[d]_{p} \ar[r]^{i} & M \ar[d]^{p}
  & \pi_1(F) \ar[d]_{p_*} \ar[r]^{i_*} & \pi_1(M) \ar[d]^{p_*} \\
  F/G  \ar[r]^{\hat{i}} & M/G
  & \pi_1(F/G)  \ar[r]^{\hat{i}_*} & \pi_1(M/G)
  }
$$
Suppose  $F/G$ is connected. Then $F$ is connected if and only if
$$\hat{i}_*(\pi_1(F/G))\cdot p_*(\pi_1(M))=\pi_1(M/G).$$
\end{lemma}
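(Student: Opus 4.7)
The plan is to turn the statement into a clean piece of covering space bookkeeping. The regular orbifold cover $p: M \to M/G$ is classified by the short exact sequence
$$1 \longrightarrow p_*(\pi_1(M)) \longrightarrow \pi_1(M/G) \stackrel{\varphi}{\longrightarrow} G \longrightarrow 1,$$
where $\varphi$ assigns to a loop $\gamma$ in $M/G$ (based at $x_0$) the unique deck transformation $g \in G$ carrying the chosen lift basepoint $\tilde{x}_0 \in p^{-1}(x_0)$ to the endpoint of the lift of $\gamma$ starting at $\tilde{x}_0$. The identity $\hat{i}_*(\pi_1(F/G)) \cdot p_*(\pi_1(M)) = \pi_1(M/G)$ is then exactly the statement that $\varphi|_{\hat{i}_*(\pi_1(F/G))}$ is surjective, so the entire problem reduces to computing this restricted image.

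Next I would pick the basepoint $x_0$ in $F/G$, lift it to $\tilde{x}_0 \in F$, and let $F_0$ be the component of $F$ containing $\tilde{x}_0$. Since $F/G$ is connected, $G$ acts transitively on $\pi_0(F)$, so
$$|\pi_0(F)| = [G : \mathrm{Stab}_G(F_0)],$$
and hence $F$ is connected if and only if $\mathrm{Stab}_G(F_0) = G$. The key lemma I would then prove is that
$$\varphi\bigl(\hat{i}_*(\pi_1(F/G, x_0))\bigr) = \mathrm{Stab}_G(F_0).$$
One inclusion is immediate: a loop $\gamma$ in $F/G$ lifts to a path in $F$ starting at $\tilde{x}_0$, which stays in $F_0$ by connectedness of the path, so its endpoint $g\tilde{x}_0$ lies in $F_0$, forcing $g \in \mathrm{Stab}_G(F_0)$. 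For the reverse inclusion, given $g \in \mathrm{Stab}_G(F_0)$, choose a path in $F_0$ from $\tilde{x}_0$ to $g\tilde{x}_0$; it projects to a loop in $F/G$, and by construction $\varphi$ sends its class to $g$.

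Combining these two steps finishes the lemma: $F$ is connected iff $\mathrm{Stab}_G(F_0) = G$ iff $\varphi \circ \hat{i}_*$ is surjective iff the product $\hat{i}_*(\pi_1(F/G)) \cdot \ker \varphi$ exhausts $\pi_1(M/G)$, which is exactly the stated equation. The only mildly delicate point, and the one I would spell out carefully, is the reverse inclusion in the key lemma: one needs to know that $F_0$ is path-connected in a way compatible with the orbifold basepoint (so that joining $\tilde{x}_0$ to $g\tilde{x}_0$ by a path in $F_0$ makes sense and projects to a well-defined element of $\pi_1(F/G, x_0)$). Since $F$ is a manifold and we are free to choose $\tilde{x}_0$ to be a regular point of the $G$-action, this causes no difficulty. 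Everything else is bookkeeping about the diagram supplied in the statement.
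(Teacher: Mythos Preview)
Your proof is correct and is actually cleaner than the paper's. The paper argues the two implications separately: for one direction it takes an intermediate cover $\widehat{M}$ corresponding to the subgroup $\hat{i}_*(\pi_1(F/G))\cdot p_*(\pi_1(M))$ and observes that $F/G$ lifts disjointly there; for the other it picks a component $F_1$ with stabilizer $G_1$ and runs a chain of index inequalities to conclude that $|\hat{i}_*(\pi_1(F/G))\cdot p_*(\pi_1(M)):p_*(\pi_1(M))|\le |G_1|<|G|$. Your route replaces both halves by the single identity $\varphi\bigl(\hat{i}_*(\pi_1(F/G))\bigr)=\mathrm{Stab}_G(F_0)$, proved by lifting paths in both directions, and then reads off the equivalence at once. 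This is more symmetric and in fact yields a sharper statement than the paper's inequality (you get equality of the index with $[G:\mathrm{Stab}_G(F_0)]$, not just a bound). The only caveat you already flagged---choosing the basepoint to be a regular point so that orbifold path-lifting behaves as expected---is handled correctly.
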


\begin{proof}
$"\Longrightarrow"$ Suppose $\hat{i}_*(\pi_1(F/G))\cdot
p_*(\pi_1(M))\subsetneqq\pi_1(M/G)$. We can find an orbifold
covering space $\widehat{M}$ corresponds to
$\hat{i}_*(\pi_1(F/G))\cdot p_*(\pi_1(M))$. Then we have diagram:
$$\xymatrix{
  F \ar[2,0]_{p} \ar[0,2]^{i}
                & & M \ar[dl] \ar[2,0]^{p} \\
               & \widehat{M} \ar[dr]^{\hat{p}} &  \\
  F/G \ar@{-->}[ur] \ar[0,2]^{\hat{i}}
               &  & M/G             }
$$
Since $\hat{i}_*(\pi_1(F/G))\subseteq
\hat{p}_*(\pi_1(\widehat{M}))$, $F/G$ can lift to $\widehat{M}$, and
it lifts to a disjoint union of copies. Hence $F$ must be
disconnected.

$"\Longleftarrow"$ Suppose  $F$ is not connected.  Let
$F_1\subsetneqq F$ be a component of $F$ and $G_1$ its stablilizer
in $G$, that is $G_1=\{ h\in G \mid h(F_1)=F_1\}$. Then
$F_1/G_1=F/G$. Now $|\pi_1(M/G):p_*(\pi_1(M))|=|G|$, and

\begin{eqnarray*}
&&|\hat{i}_*(\pi_1(F/G))\cdot p_*(\pi_1(M)): p_*(\pi_1(M))| \\
&= &|\hat{i}_*(\pi_1(F/G))\cdot p_*(\pi_1(M))/ p_*(\pi_1(M))|\\
&= &|\hat{i}_*(\pi_1(F/G))/\hat{i}_*(\pi_1(F/G))\cap p_*(\pi_1(M))|\\
&\leq &|\hat{i}_*(\pi_1(F/G)):\hat{i}_*p_*(\pi_1(F_1))|\\
&= &|\pi_1(F/G)/ker\hat{i}_*:p_*(\pi_1(F_1))\cdot ker\hat{i}_*/ker\hat{i}_*|\\
&= &|\pi_1(F_1/G_1):p_*(\pi_1(F_1))\cdot ker\hat{i}_*|\\
&\leq &|\pi_1(F_1/G_1):p_*(\pi_1(F_1))|\\
&= &|G_1|< |G|.
\end{eqnarray*}
Hence $\hat{i}_*(\pi_1(F/G))\cdot p_*(\pi_1(M))\subsetneqq
\pi_1(M/G)$.
\end{proof}

\begin{remark}\label{surjective} (1) When F is connected, we have
$\hat{i}_*p_*(\pi_1(F))=\hat{i}_*(\pi_1(F/G))\cap p_*(\pi_1(M))$ and
$ker(\hat{i}_*) \subseteq p_*(\pi_1(F))$. If $M$ is simply
connected, then $F$ is connected if and only if $\hat{i}_*$ is
surjective.

(2) If $F/G\subset S^3/G$ bounds handlebody orbifolds on both sides
then clearly $\hat{i}_*$ is surjective.
\end{remark}

\begin{corollary}\label{connect surface}
Suppose $\mathcal{F}$ is a connected 2-suborbifold with an embedding
$\hat{i}: \mathcal{F}\hookrightarrow S^3/G$ into a spherical
orbifold $S^3/G$. Let $p^{-1}(\mathcal{F})=\Sigma$; then $\Sigma$ is
connected if and only if $\hat{i}_*$ is surjective.
\end{corollary}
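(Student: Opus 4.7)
The plan is to obtain Corollary \ref{connect surface} as an immediate specialization of Lemma \ref{connected}, taking $M = S^3$ and $F = \Sigma = p^{-1}(\mathcal{F})$. In this setting the orbifold embedding $\hat{i}\colon \mathcal{F}\hookrightarrow S^3/G$ plays the role of the quotient-level map in the lemma, and the induced $G$-action on $S^3$ restricts to a $G$-action on the $G$-invariant subset $\Sigma$ whose quotient is exactly $\Sigma/G = \mathcal{F}$. The hypothesis of Lemma \ref{connected} that $F/G$ be connected is therefore satisfied by the hypothesis that $\mathcal{F}$ is connected.

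The key observation is that $S^3$ is simply connected, so $\pi_1(S^3) = \{1\}$ and hence $p_*(\pi_1(S^3))$ is the trivial subgroup of $\pi_1(S^3/G)$. Substituting this into the criterion of Lemma \ref{connected}, the condition
\[
\hat{i}_*(\pi_1(\mathcal{F}))\cdot p_*(\pi_1(S^3)) = \pi_1(S^3/G)
\]
collapses to $\hat{i}_*(\pi_1(\mathcal{F})) = \pi_1(S^3/G)$, which is precisely the surjectivity of $\hat{i}_*$. This gives the stated equivalence and is already signalled in Remark \ref{surjective}(1).

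There is no real obstacle here; the argument is essentially a one-line specialization. The only thing to double-check is that the diagrams in Lemma \ref{connected} align correctly with the present setup — specifically, that $\Sigma = p^{-1}(\mathcal{F})$ is indeed $G$-invariant with quotient $\mathcal{F}$, so that the covering $p\colon \Sigma \to \mathcal{F}$ and the embedding $i\colon \Sigma \hookrightarrow S^3$ fit into the commutative square of the lemma. Once that is noted, nothing further is required.
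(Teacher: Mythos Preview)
Your proof is correct and is exactly the approach the paper takes: the corollary is an immediate specialization of Lemma~\ref{connected} (via Remark~\ref{surjective}(1)) with $M=S^3$, where $\pi_1(S^3)=\{1\}$ reduces the product condition to surjectivity of $\hat{i}_*$. The paper gives no separate proof for the corollary, as it follows directly in this way.
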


\begin{lemma}[Edge killing]\label{edge kill}
Let $(X, \Gamma)$ be an orientable 3-orbifold with underlying space
$X$ and singular set a trivalent graph $\Gamma$. An edge killing
operation is defined as in Figure 3, where $(a,b)$ denotes the
greatest common divisor of $a$ and $b$. If via an edge killing
operation we get from $\Gamma$ a new graph $\Gamma'$, then we have a
surjective homomorphism $\pi_1(X, \Gamma)\rightarrow \pi_1(X,
\Gamma')$.
\end{lemma}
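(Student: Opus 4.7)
The plan is to realise the edge killing operation as the quotient of $\pi_1(X,\Gamma)$ by the normal closure of a meridian of the killed edge; geometrically, this amounts to lowering the singular index of that edge from its value $n$ down to $1$, thereby erasing it from the singular set.

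Let $e$ be the edge of $\Gamma$ to be killed and let $\mu_e\in \pi_1(X,\Gamma)$ be a meridian loop around $e$. The first step is a local computation at each endpoint of $e$. At a trivalent vertex $v$ where $e$ meets two other edges of labels $a$ and $b$, the local orbifold fundamental group is the finite subgroup of $SO(3)$ from Figure~1 with the corresponding signature; it is generated by meridians $\mu_e,\mu_a,\mu_b$ subject to the vertex relation $\mu_e\mu_a\mu_b=1$ and the order relations $\mu_e^n=\mu_a^a=\mu_b^b=1$. Adjoining the relation $\mu_e=1$ forces $\mu_b=\mu_a^{-1}$, and hence $\mu_a^a=\mu_a^b=1$, so $\mu_a^{(a,b)}=1$. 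This is precisely the local presentation one gets at an interior point of a single singular edge of label $(a,b)$, which is exactly the model that $\Gamma'$ has in place of the two edges previously meeting at $v$.

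To pass from local to global I would apply the orbifold van Kampen theorem of \cite{BMP} to the decomposition of $(X,\Gamma)$ into a regular neighbourhood $N(e)$ of $e$ and its exterior. The exterior coincides with the corresponding piece of $(X,\Gamma')$, whereas inside $N(e)$ the local step above identifies the effect of imposing $\mu_e=1$ with the local picture of $(X,\Gamma')$. Consequently a presentation for $\pi_1(X,\Gamma')$ is obtained from a presentation for $\pi_1(X,\Gamma)$ by adjoining the single relation $\mu_e=1$, which yields $\pi_1(X,\Gamma')\cong \pi_1(X,\Gamma)/\langle\langle\mu_e\rangle\rangle$; the quotient map is the desired surjection.

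The main obstacle is treating the degenerate configurations: $e$ might be a loop, its two endpoints might coincide, two edges to be merged at an endpoint of $e$ might in fact coincide in $\Gamma$, or the two merged pairs (one at each endpoint of $e$) might share common edges of $\Gamma$. In each such situation one has to redo the local van Kampen computation to confirm that the recipe ``replace two co-terminal edges of labels $a,b$ by a single edge of label $(a,b)$'' still yields the correct quotient; the algebra is the same gcd calculation, but one must keep track of the identifications among meridians so that the surjection remains globally well defined.
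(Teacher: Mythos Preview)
Your proposal is correct and shares the paper's core idea: the edge killing operation amounts to adjoining the relation $\mu_e=1$ for a meridian of the killed edge, which exhibits $\pi_1(X,\Gamma')$ as a quotient of $\pi_1(X,\Gamma)$. The paper's proof is considerably terser: rather than decomposing around $e$ and invoking orbifold van~Kampen, it observes once and for all that $\pi_1(X,\Gamma)$ is obtained from the ordinary fundamental group $\pi_1(X\setminus N(\Gamma))$ of the complement by imposing the meridian-order relations $x_i^{r_i}=1$, and then notes that killing $e$ simply adds the further relation $x_e=1$. Your local gcd computation at the vertices and your discussion of degenerate configurations make explicit what the paper leaves to the reader, but the underlying mechanism is the same.
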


\begin{center}
\scalebox{0.65}{\includegraphics*[0pt,0pt][546pt,77pt]{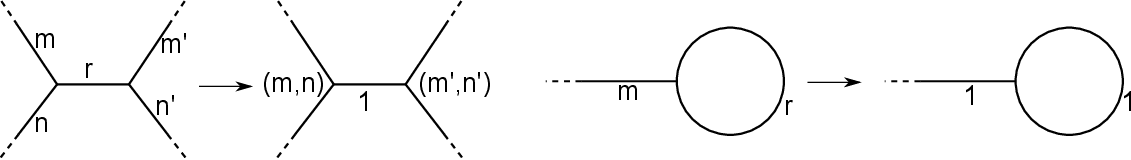}}

Figure 3
\end{center}

\begin{proof}
Denoting by $N(\Gamma)$ a regular neighborhood of $\Gamma$ in $X$,
there is a surjective homomorphism from $\pi_1(X-N(\Gamma))$ to
$\pi_1(X, \Gamma)$, and we can compute $\pi_1(X, \Gamma)$ from
$\pi(X-N(\Gamma))$ by adding relations like $x^r=1$ \cite{BMP}. The
effect of an edge killing operation on fundamental groups is just
adding relations like $x=1$, and then we obtain a presentation of
$\pi(X, \Gamma')$.
\end{proof}

\begin{remark}
This edge killing operation is just a way to get a quotient group.
Using it we can show some $\hat{i}_*$ is not surjective. The
orbifold $(X, \Gamma')$ may be not a good one (be covered by a
manifold).
\end{remark}

\begin{lemma}\label{2sphere}
Let $G$ be an extendable finite group action with respect to some
embedding $e: \Sigma_g\hookrightarrow S^3$. If $|e(\Sigma_g)/G|$ is
homeomorphic to $S^2$, then $|S^3/G|$ is homeomorphic to $S^3$.
\end{lemma}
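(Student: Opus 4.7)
The plan is to prove that the underlying space $|S^3/G|$ is a simply connected closed orientable 3-manifold, which by the Poincar\'e conjecture forces $|S^3/G|\cong S^3$. The argument splits into a local check that $|S^3/G|$ is a 3-manifold and a global computation that its fundamental group is trivial.

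For the local step, a neighborhood of a point $x'\in|S^3/G|$ is modeled on $\mathbb{R}^3/St(x)$ with $St(x)$ a finite subgroup of $SO(3)$, and every such quotient is homeomorphic to $\mathbb{R}^3$; hence $|S^3/G|$ is a closed orientable $3$-manifold.

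For the global step, I would start from $\pi_1^{orb}(S^3/G)=G$ and observe that passing to the underlying space $|S^3/G|$ amounts to filling in meridian disks of all edges of the singular graph $\Gamma$ (this is an iterated use of the edge-killing of Lemma \ref{edge kill} with every label reset to $1$). This gives
$$\pi_1(|S^3/G|)=G/N,$$
where $N\lhd G$ is the normal closure of the stabilizers of all points of $S^3$. The proof therefore reduces to showing $N=G$, i.e., that $G$ is generated, up to conjugation, by elements with fixed points on $S^3$.

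To establish $N=G$, I would feed in the hypothesis $|e(\Sigma_g)/G|\cong S^2$ via the regular orbifold covering $\Sigma_g\to\Sigma_g/G$ with deck group $G$. Because the underlying surface has genus $0$, the orbifold fundamental group admits the presentation
$$\pi_1^{orb}(\Sigma_g/G)=\langle x_1,\ldots,x_k\mid x_1\cdots x_k=1,\ x_j^{q_j}=1\rangle,$$
generated entirely by the cone-point meridians $x_j$. The surjection $\pi_1^{orb}(\Sigma_g/G)\twoheadrightarrow G$ then exhibits $G$ as generated by the images of the $x_j$, each of which is a generator of the cyclic stabilizer of some preimage in $\Sigma_g\subset S^3$ and thus has a fixed point on $S^3$. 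Hence $N=G$, $\pi_1(|S^3/G|)=1$, and the Poincar\'e conjecture yields $|S^3/G|\cong S^3$. The main subtlety is the identification $\pi_1(|S^3/G|)=G/N$, together with the careful verification that the deck image of each cone-point meridian $x_j$ really is a generator of a point-stabilizer for the $G$-action on $S^3$ rather than of some larger subgroup.
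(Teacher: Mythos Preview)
Your argument is correct and in substance the same as the paper's, but the paper packages it more economically using results already established. The paper observes that, since $\Sigma_g$ is connected, Corollary~\ref{connect surface} gives surjectivity of $\hat{i}_*\colon \pi_1(\Sigma_g/G)\to\pi_1(S^3/G)$ on orbifold fundamental groups; then applying the edge-killing Lemma~\ref{edge kill} to every singular edge simultaneously yields a surjection $\pi_1(|\Sigma_g/G|)\to\pi_1(|S^3/G|)$, and the left side is $\pi_1(S^2)=1$. Your route instead unpacks both of these steps by hand: you replace the appeal to Corollary~\ref{connect surface} by the direct observation that the regular cover $\Sigma_g\to\Sigma_g/G$ has deck group $G$, and you replace the appeal to Lemma~\ref{edge kill} by the explicit computation $\pi_1(|S^3/G|)\cong G/N$. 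The ``subtlety'' you flag, namely that $\pi_1(|S^3/G|)=G/N$ with $N$ the normal closure of point-stabilizers, is exactly what the edge-killing lemma records, and your claim that the deck image of each cone-point meridian generates a point-stabilizer on $S^3$ is standard orbifold covering theory (the stabilizer on $\Sigma_g$ is contained in the stabilizer on $S^3$). So nothing is wrong; your proof is simply the paper's two-line argument with the cited lemmas expanded in place.
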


\begin{proof}
By Corollary \ref{connect surface} the homomorphism $(e/G)_*:
\pi_1(\Sigma_g/G)\rightarrow \pi_1(S^3/G)$ is surjective. By Lemma
\ref{edge kill}, if we kill all the singular edges we get a
surjection $\pi_1(|\Sigma_g/G|)\rightarrow \pi_1(|S^3/G|)$. Hence
$\pi_1(|S^3/G|)$ is trivial and $|S^3/G|$ is homeomorphic to $S^3$.
\end{proof}

\section{Dunbar's list of spherical 3-orbifolds}\label{dunbar}
In \cite{Du1}, \cite{Du2} Dunbar lists all spherical
orbifolds with underlying space $S^3$. We list these pictures below
and give a brief explanation such that one can check graphs
conveniently. For more information, one should see the original
papers.

Since the underlying space is $S^3$, all the information is
contained in the trivalent graphs of the singular sets. Each edge in
a graph is labeled by an integer indicating the singular index of
the edge, with the convention that each unlabeled edge has index 2.
If a graph has a vertex such that the incident edges have labels
$(2, 3, 3)$, $(2, 3, 4)$ or $(2, 3, 5)$, the orbifold is non-fibred.
All the non-fibred spherical orbifolds have underlying space $S^3$
and are listed in Table III. Otherwise the orbifolds are Seifert
fibred and are listed in Table I (the basis of the fibration is a
2-sphere) and Table II (the basis is a disk).

\vspace{20pt}

\begin{center}
Table I: Fibred case with base $S^2$

\vspace{10pt}

\scalebox{0.4}{\includegraphics*[0pt,0pt][392pt,162pt]{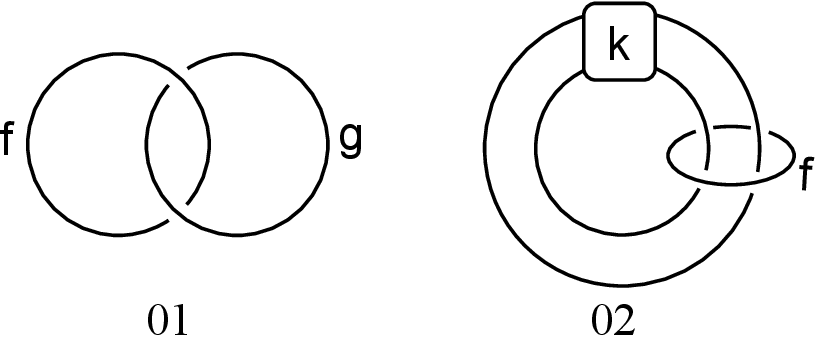}}

\scalebox{0.4}{\includegraphics*[0pt,0pt][766pt,173pt]{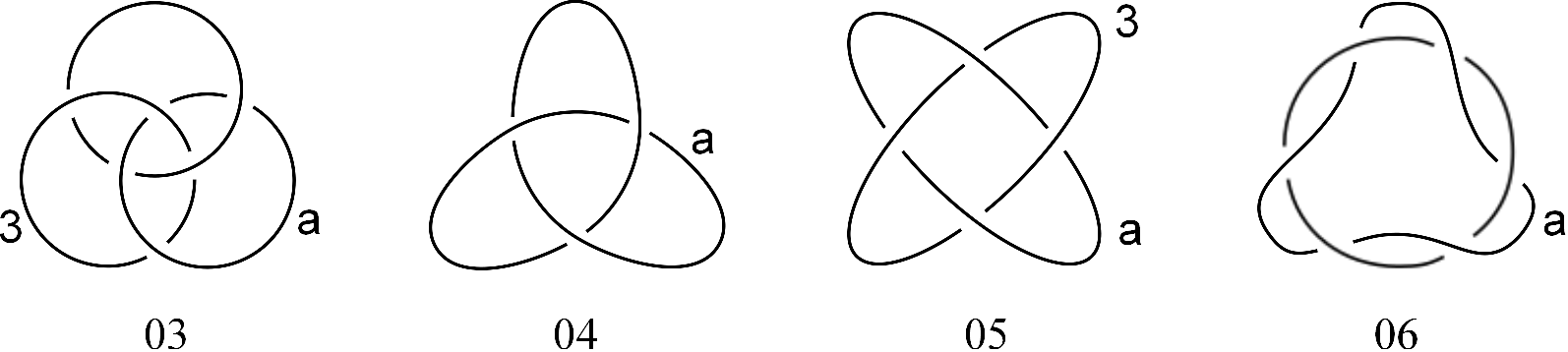}}

\scalebox{0.4}{\includegraphics*[0pt,0pt][766pt,176pt]{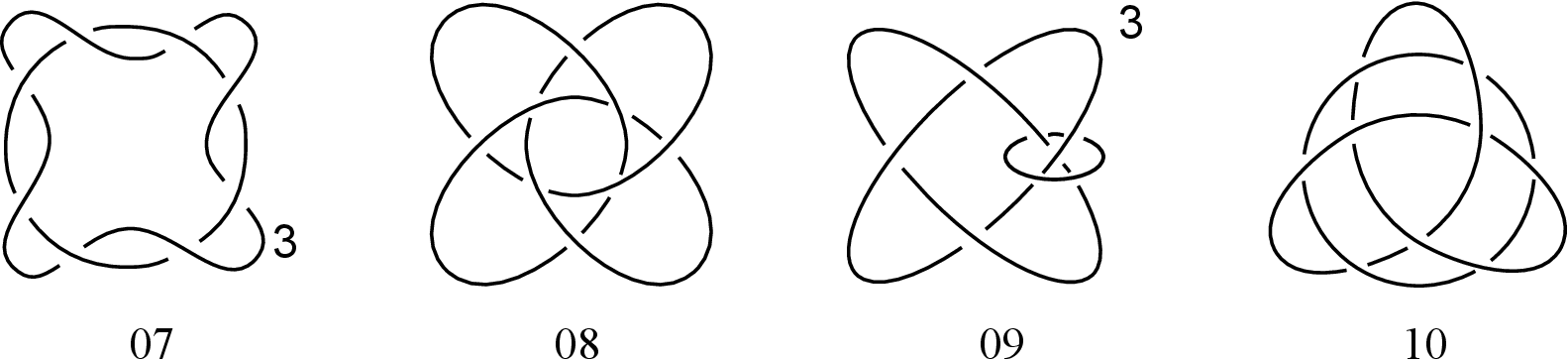}}

\scalebox{0.4}{\includegraphics*[0pt,0pt][566pt,187pt]{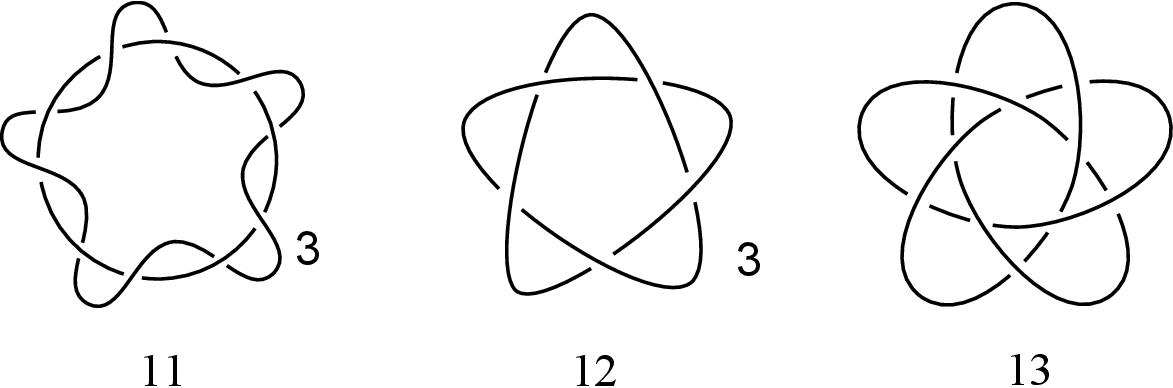}}

\end{center}

\vspace{50pt}

\begin{center}
Table II: Fibred case with base $D^2$

\vspace{10pt}

\scalebox{0.5}{\includegraphics*[0pt,0pt][680pt,950pt]{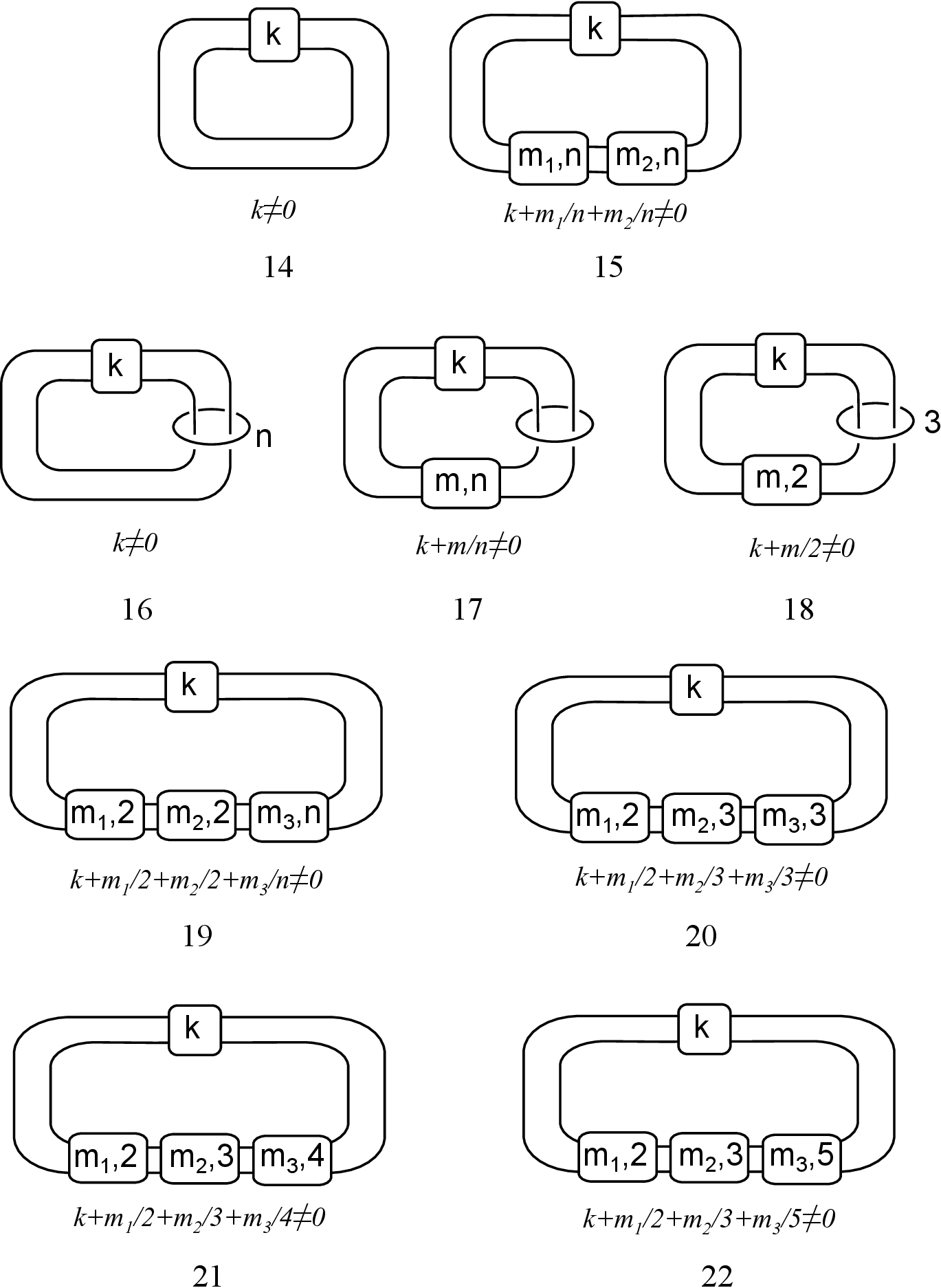}}

\end{center}

In Table I and Table II many graphs have some free or undetermined
parameters (just called parameters in the following). These
parameters should satisfy $n>1$, $3\le a\le5$, $f\ge 1$, $g\ge 1$,
and in Table I we require $k\neq 0$. The letter `@' means
amphicheiral (there exists an orientation-reversing homeomorphism of
the orbifold). If an orbifold is non-amphicheiral, as in the
original paper its mirror image is not presented.

A box with an integer $k$ indicates two parallel arcs with $k$-half
twists, the over-crossings from lower left to upper right if $k>0$,
and upper left to lower right if $k<0$. A box with two integer $m,
n$ stands for a picture as in Figure 4 and Figure 5; It satisfies
$|2m|\le n$.

\vspace{20pt}

\begin{center}

\scalebox{0.5}{\includegraphics*[0pt,0pt][340pt,60pt]{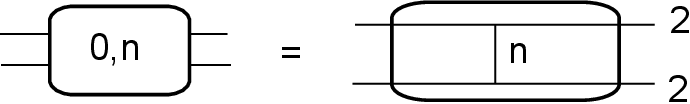}}

Figure 4
\end{center}

\vspace{40pt}

\begin{center}
\scalebox{0.7}{\includegraphics*[0pt,0pt][333pt,220pt]{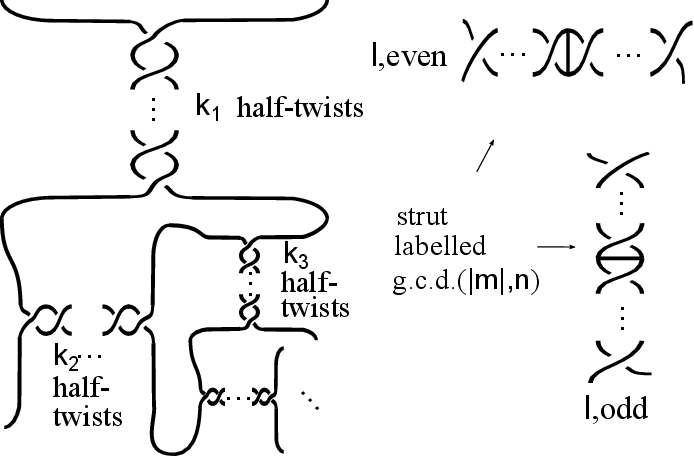}}
\raisebox{80pt}{\scalebox{1}{\parbox{120pt}{$\cfrac{|m|}{n}=
\cfrac{1}{k_1+\cfrac{1}{k_2+\cfrac{1}{\ddots+\cfrac{1}{k_l}}}}$}}}

Figure 5
\end{center}

\vspace{20pt}

All the crossing numbers of the horizontal and vertical parts are
determined by the unique continued fraction presentation of $|m|/n$,
such that all $k_i$ are positive and $k_l\ge 2$. All the
over-crossings are from lower left to upper right if $m>0$, and from
upper left to lower right if $m<0$. If the greatest common divisor
$(|m|, n)=d>1$, we add a `strut' labeled $d$ in the $k_l$ twist as
shown in the picture. If $m=0$, we add a `strut' labeled $n$ between
two parallel lines.

\newpage

\begin{center}

Table III: Non-fibred case

\vspace{20pt}
\scalebox{0.5}{\includegraphics*[0pt,0pt][660pt,1130pt]{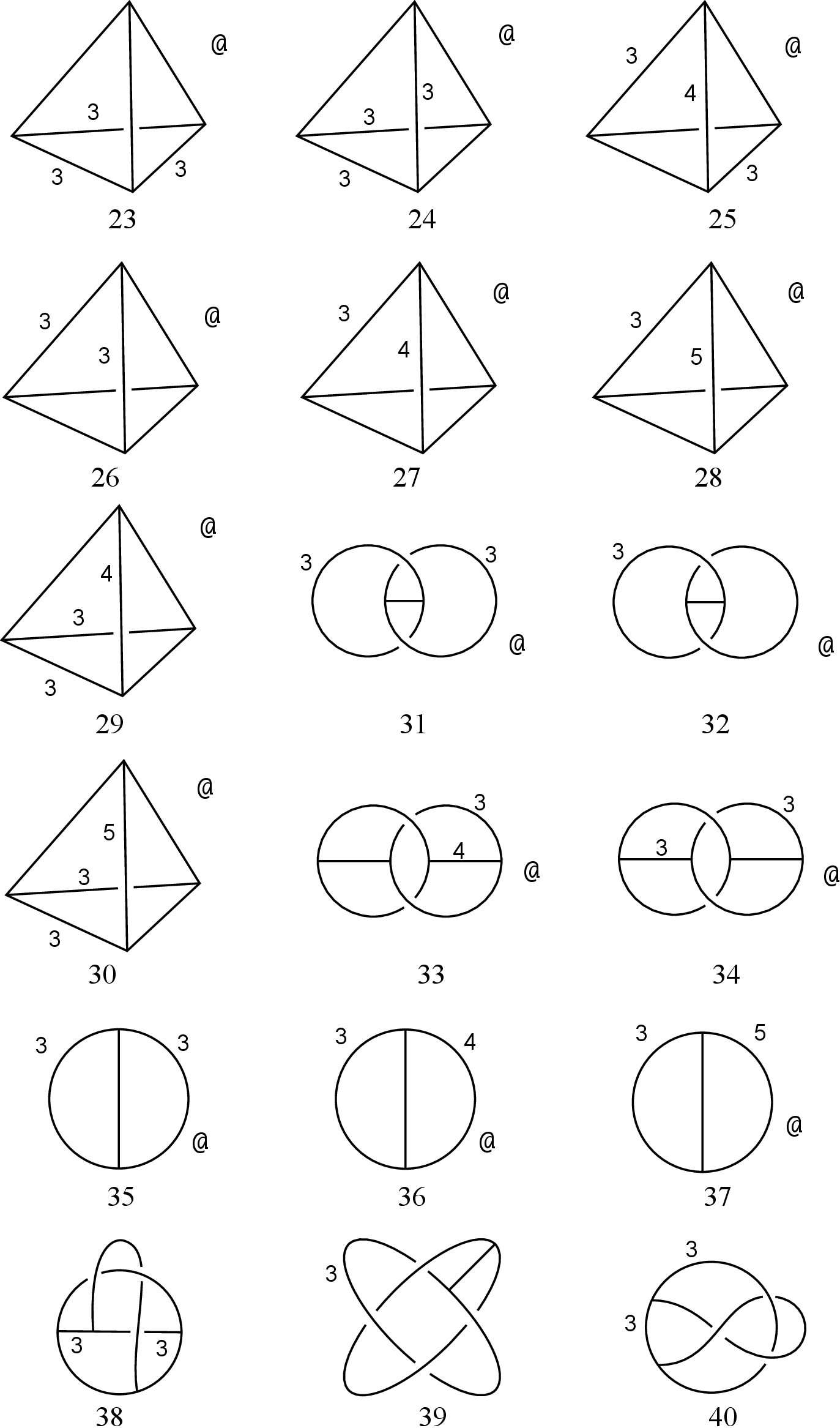}}

\end{center}

\newpage

\section{Strategy and outline of finding $OE_g$}\label{Strategy}
{\bf 1. Obtain $OE_g$ from allowable 2-suborbifolds in spherical
3-orbifolds.}

We already know $OE_g\ge 4(g+1)$ by Example \ref{cage}, or see
\cite{WWZZ}, hence to determine $OE_g$ we can assume $|G|>4(g-1)$.

\begin{definition}\label{allowable}
A 2-suborbifold $\mathcal{F}$ in a spherical 3-orbifold $S^3/G$,
with $|G|>4(g-1)$, is called allowable if its preimage in $S^3$ is a
closed connected surface $\Sigma_g$. A singular edge/dashed arc is
called allowable if the boundary of its regular neighborhood is
allowable.
\end{definition}

Therefore if $G$ acts on $(S^3, \Sigma_g)$ and realizes $OE_g$ then
$\mathcal{F}=\Sigma_g/G\subset S^3/G$ must be an allowable
2-suborbifold. We intend to find extendable actions from allowable
2-suborbifolds in spherical 3-orbifolds and, more weakly, to find
the maximum orders of extendable actions from certain information
about such allowable 2-suborbifolds.

Suppose we have a spherical 3-orbifold $S^3/G$ and an allowable
2-suborbifold $\mathcal{F}\subset S^3/G$. By Proposition \ref{bound
handlebody}, $\mathcal{F}$ has underlying space $S^2$ with four
singular points, and moreover $\mathcal{F}$ has a singular type as
in the list of Lemma \ref{singular type}. Once we know the singular
type of $\mathcal{F}$ and the order of $G$, we know the genus of the
corresponding closed connected surface $\Sigma_g\subset S^3$ such
that $(S^3, \Sigma_g)/G=(S^3/G, \mathcal{F})$ by Lemma \ref{compute
genus}. So if we know the singular types of all allowable
2-orbifolds in $S^3/G$, then we know all $\Sigma_g$ which admit an
extendable action of the group $G$ with $|G|>4(g-1)$; in other
words, for a fixed $g$ we know if $\Sigma_g$ admits an extendable
actions of group $G$ with $|G|>4(g-1)$. Hence if we know the
singular types of all allowable 2-orbifolds in all spherical
3-orbifolds $S^3/G$, then for a fixed $g$ we know all finite groups
$G\subset SO(4)$ such that $\Sigma_g$ admits an extendable action of
the group $G$ with $|G|>4(g-1)$, and consequently $OE_g$ can be
determined.

\noindent {\bf 2. List all allowable 2-suborbifolds in spherical
3-orbifolds.}

\begin{definition}
A 2-sphere in a spherical 3-orbifold $S^3/G$ is called candidacy if
it intersects the singular graph of $S^3/G$ in exactly four singular
points of one of the types listed in Lemma \ref{singular type}. A
singular edge/dashed arc is called candidacy if the underlying space
of the boundary of its regular neighborhood is candidacy.
\end{definition}

Clearly for each allowable 2-suborbifold $\mathcal{F}\subset S^3/G$,
$|\mathcal{F}|\subset S^3/G$ is a candidacy 2-sphere. On the other
hand, each candidacy 2-sphere is the underlying space of a
non-spherical  2-orbifold $\mathcal{F}$, and we will denote this
candidacy 2-sphere by $|\mathcal{F}|$.

We say that a 2-orbifold $\hat{i}: \mathcal{F}\subset S^3/G$ is
$\pi_1$-surjective if the induced map on the orbifold fundamental
groups is surjective.

The process of  listing all allowable 2-suborbifolds in spherical
3-orbifolds is  divided into two steps:

(i) List all spherical 3-orbifolds containing allowable
2-suborbifolds.

Suppose $\hat{i}: \mathcal{F}\subset S^3/G$ is an allowable
2-suborbifold in a spherical 3-orbifold. Then the preimage of
$\mathcal{F}$ must be connected, and by Lemma \ref{2sphere} the
underlying space of $S^3/G$ is $S^3$. All  spherical 3-orbifolds
with underlying space $S^3$ are listed in Dunbar's lists provided in
Section \ref{dunbar}.  Below we will denote spherical 3-orbifolds
with underline space $S^3$ by $(S^3, \Gamma)$, where $\Gamma$ is the
singular set.

Since $\hat{i}: \mathcal{F}\subset (S^3, \Gamma)$ is  allowable, the
preimage of $\mathcal{F}$ is connected, and by Corollary
\ref{connect surface},  $\hat{i}$ is $\pi_1$-surjective. Let $(S^3,
\Gamma)$ be a spherical 3-orbifold with parameters; we will show
that, for each 2-suborbifold $\hat{i}: \mathcal{F}\hookrightarrow
(S^3,\Gamma)$ such that $|\mathcal{F}|$ is a candidacy 2-sphere and
$\hat{i}$ is $\pi_1$-surjective, the parameters must satisfy certain
equations. Then we can determine the parameters and get a list of
spherical 3-orbifold containing allowable 2-suborbifolds which is a
small subset of Dunbar's list where the singular sets are relatively
simple. Step (i) will be carried in Section \ref{list 3-orbifolds}.

(ii) List all allowable 2-suborbifolds in each spherical 3-orbifold
obtained in Step (i).

How to find such 2-suborbifolds? Indeed this is already the question
we must face in Step (i). Precisely, this question divides into two
subquestions:

(a) How to find candidacy  2-spheres $|\mathcal{F}|$ in a given
spherical 3-orbifold $(S^3, \Gamma)$?

(b) For each candidacy 2-sphere $|\mathcal{F}|$  we find, how to
verify if $\hat{i}$ is $\pi_1$-surjective?

A simple and crucial fact in solving Question (a) is provided by
Proposition \ref{bound handlebody}: For each candidacy 2-sphere
$|\mathcal{F}|$, the 2-orbifold $\mathcal{F}$ must bound a
handlebody orbifold $V$; moreover the shape of $V$ is given in Lemma
\ref{handlebody shape}.

If the singular type is not $(2, 2, 3, 3)$, then $V$ is a regular
neighborhood of a singular edge. In this case we can check all the
edges to see whether the corresponding singular type is contained in
the list of Lemma \ref{singular type}.

\begin{center}
\scalebox{0.5}{\includegraphics*[0pt,0pt][650pt,90pt]{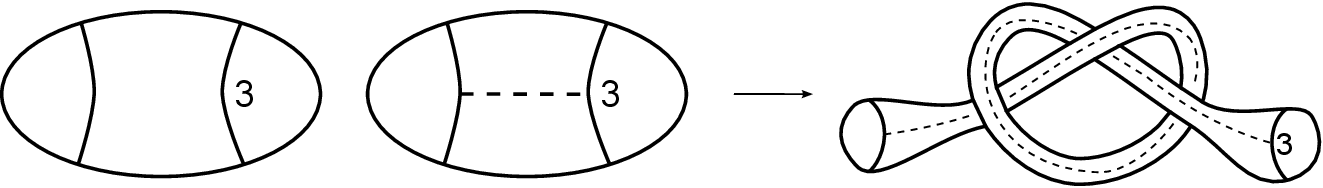}}

Figure 6
\end{center}

If the singular type is $(2, 2, 3, 3)$, there are two possibilities
for the shape of $V$. The new one can be thought of as a
neighborhood of a regular arc with its two ends on singular edges
labeled  $2$ and $3$ which will be presented  by a dashed arc. If
there is such a dashed arc then we can locally knot this arc in an
arbitrary way and obtain infinitely many candidacy 2-spheres, see
Figure 6; in this case we only give one such dashed arc, and this
will be the unknotted one if it exists.

For Question (b): If $\mathcal{F}\subset (S^3,\Gamma)$ bounds
handlebody orbifolds on both sides then  $\hat{i}_*$ is surjective
(Remark \ref{surjective} (2)). To verify the $\pi_1$-surjectivity of
$\hat{i}: \mathcal{F}\subset (S^3, \Gamma)$ for the knotted cases,
we are still lucky, all $\pi_1$-surjective cases  of $\hat{i}:
\mathcal{F}\subset (S^3, \Gamma)$ can be verified by the so-called
coset enumeration method, and all non-$\pi_1$-surjective cases can
be verified by the edge killing method of Lemma \ref{edge kill},
with three exceptions where Lemma \ref{surjection} will be applied.

\section{List of fibred 3-orbifolds containing allowable
2-suborbifolds.}\label{list 3-orbifolds}

We will establish the equations (and inequalities) which the
parameters in Dunbar's list must satisfy in order to contain
allowable 2-suborbifolds. We will solve these equations to get all
solutions and redrew the pictures of the corresponding 3-orbifolds.
Since different solutions often give the same orbifold up to the
automorphisms of the orbifold, we will only draw the graphs of
non-homeomorphic orbifolds.

Note first all graphs having parameters are contained in Table I and
Table II.

Suppose $|\mathcal{F}|\subset |(S^3, \Gamma)|$ is a candidacy
2-sphere. Then $\mathcal{F}\subset (S^3, \Gamma)$ bounds a
handlebody orbifold $V$ of given singular type by the discussion in
last section.

To determine the parameters, we divide the discussion into two
cases:

Case 1. The singular type is not $(2, 2, 3, 3)$; then $V$ is as in
Figure 2(a).

Case 2. The singular type is $(2, 2, 3, 3)$, and $V$ is as in Figure
2(a) or as in Figure 2(b).

In Case 1, $\Gamma\cap V$ has two degree $3$ vertices, and
$\mathcal{F}=\partial V$ has  singular type $(2, 2, 2, n), n\ge 3$,
$(2,2,3,4)$ or $(2,2,3,5)$. Hence $\Gamma\cap V$ must be a label $2$
arc adding two `strut segments' with different labels, see Figure 7;
$(r,s)$ is either $(2,n)$, or $(3,4)$, or $(3,5)$.

Only graphs 15, 19, 20, 21, 22 in Table II have more than one strut.
So we need only to deal with these five graphs in Case 1.

\begin{center}
\scalebox{0.6}{\includegraphics*[0pt,0pt][385pt,82pt]{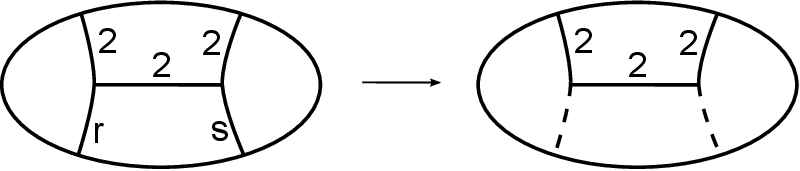}}

Figure 7
\end{center}

The way to determine the parameters was suggested in the last
section:

Suppose $\hat{i}: \mathcal{F}\subset (S^3, \Gamma)$ is
$\pi_1$-surjective. If  we kill these two `strut segments', we
obtain $\hat{i}: \mathcal{F'}\subset (S^3, \Gamma')$ which is also
$\pi_1$-surjective. Since $\pi_1(\mathcal{F}')=\Z_2$, it follows
$|\pi_1(S^3, \Gamma')|\le 2$, therefore  $\Gamma'$ contains no other
`strut' (otherwise $\pi_1((S^3, \Gamma'))$ would not be cyclic), and
hence $\Gamma'$ is a Montesinos link labeled by $2$. The double
branched cover of $S^3$ over $\Gamma'$ must be a 3-manifold $N$ with
trivial $\pi_1(N)$ (and hence $N=S^3$, that is to say $\Gamma'$ is a
trivial knot by the positive solution of the Smith conjecture). We
use the parameters to compute $\pi_1(N)$ and then determine the
parameters.

\begin{center}
\scalebox{0.6}{\includegraphics*[0pt,0pt][263pt,128pt]{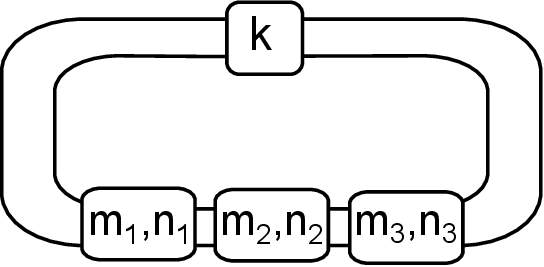}}

Figure 8
\end{center}

For short we present the graphs 15, 19, 20, 21, 22 by a single graph
in Figure 8; the five graphs correspond to the choices $(n, n, 1)$,
$(2, 2, n)$, $(2, 3, 3)$, $(2, 3, 4)$ and $(2, 3, 5)$ for $(n_1,
n_2, n_3)$ ($n>1$). Since $|2m_i|\le n_i$ and $\Gamma$ contains
exactly two `struts' with different labels, we have condition

$|2m_i|\le n_i$, at least one $m_i$ is zero and at least one $m_i$
is nonzero. $(*)$

Let $(|m_i|, n_i)=d_i$ be the greatest common divisor of $|m_i|$ and
$n_i$, by the singular type restrictions we have

$\{d_1, d_2, d_3\}=\{1, 2, d\}, (d>2)$, or $\{1, 3, 4\}$ or $\{1, 3,
5\}$. $(**)$

Write $m_i=m_i'd_i$, $n_i=n_i'd_i$, then $\Gamma'$ is the Montesinos
link presented by Figure 8, with each $(m_i, n_i)$ replaced by
$(m'_i, n'_i)$. By a theorem of Montesinos \cite[Proposition
12.30]{BZ}, the double branched cover of $S^3$ over $\Gamma'$ is a
Seifert manifold $N$ whose fundamental group has the following
presentation:

$$\pi_1(N)=\langle x, y, z, t \mid x^{n_1'}t^{m_1'}=y^{n_2'}t^{m_2'}=z^{n_3'}t^{m_3'}=1,$$
$$xyzt^{-k}=1, [x, t] = [y, t] = [z, t] = 1\rangle$$

If $m_i=0$ for some $i$, then $n_i'=1$ by definition, and it is easy
to see $\pi_1(N)$ is an abelian group. Now $\pi_1(N)$ is trivial if
and only if the determinant of the presentation matrix is $\pm 1$.
Hence we have
$$kn_1'n_2'n_3'+m_1'n_2'n_3'+n_1'm_2'n_3'+n_1'n_2'm_3'=1. \qquad (1)$$
or
$$kn_1'n_2'n_3'+m_1'n_2'n_3'+n_1'm_2'n_3'+n_1'n_2'm_3'=-1. \qquad(1)'$$
Dividing $n_1'n_2'n_3'$ on both sides and using the facts that
$|2m_i'|\le n_i'$ and $m_i'=0$ for some $i$, we have
$$|k|\le 1/n_1'n_2'n_3'+|m_1'|/n_1'+|m_2'|/n_2'+|m_3'|/n_3'<2$$
and then $k=0, \pm 1$.

Noticing that solutions of $(1)$ and $(1)'$ have a one to one
correspondence if we change the signs of $k$ and $m_i'$
simultaneously. And the corresponding knot of one solution is the
mirror image of the other. Hence we need only deal with $(1)$.

In
$(1)$ since $|2m_i'|\le n_i'$ and one $m_i'$ is zero, $k$ can not be
$-1$. Otherwise, for example assume $m_1'=0$ and $k=-1$, then $n_1'=1$ and
$(1)$ becomes
$$-n_2'n_3'+m_2'n_3'+n_2'm_3'=1.$$
But since $|2m_i'|\le n_i'$,
$$-2n_2'n_3'+2m_2'n_3'+2n_2'm_3'\le
-2n_2'n_3'+n_2'n_3'+n_2'n_3'=0.$$

Now for a given choice of $(n_1, n_2, n_3)$, We will find all
possible solutions $(k, m_1, m_2, m_3)$ or $(k, m_1, m_2, m_3, n)$
satisfying $(*), (**)$ and $(1)$ (or $(1)'$). Changing signs of a
solution will give us a mirror image. Moreover a picture of a
solution with $k=1$ always isomorphic to a picture of solution with
$k=0$ (an illustration is given in Figure 10, the illustrations of
remaining cases are similar). We only draw the graphs of the
non-homeomorphic orbifolds:

When $(n_1, n_2, n_3)=(2, 3, 3), (2, 3, 4), (2, 3, 5)$, it is easy
since we can enumerate the possibilities.

\noindent If $(n_1, n_2, n_3)=(2, 3, 3)$, then $(k, m_1, m_2,
m_3)=(0, 0, \pm1, 0)$, $(0, 0, 0, \pm1)$. We present the picture for
the case of $(0,0, 0, 1)$.

\begin{center}
\scalebox{0.5}{\includegraphics*[0pt,0pt][207pt,114pt]{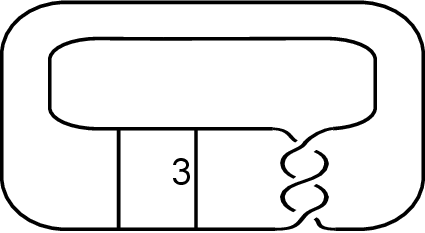}}

Figure 9
\end{center}

\noindent If $(n_1, n_2, n_3)=(2, 3, 4)$, then $(k, m_1, m_2,
m_3)=(0, 0, 0, \pm1)$, $(0, 0, \pm1, 0)$,  $(0, \pm1, 0, 0)$ or
$(\pm1, \mp1, 0, 0)$. We present $(0, 0, 0, 1)$, $(0, 0, 1, 0)$,
$(0, 1, 0, 0)$ and $(1, -1, 0, 0)$. It is clear the last two
pictures are homeomorphic.

\begin{center}
\scalebox{0.5}{\includegraphics*[0pt,0pt][465pt,264pt]{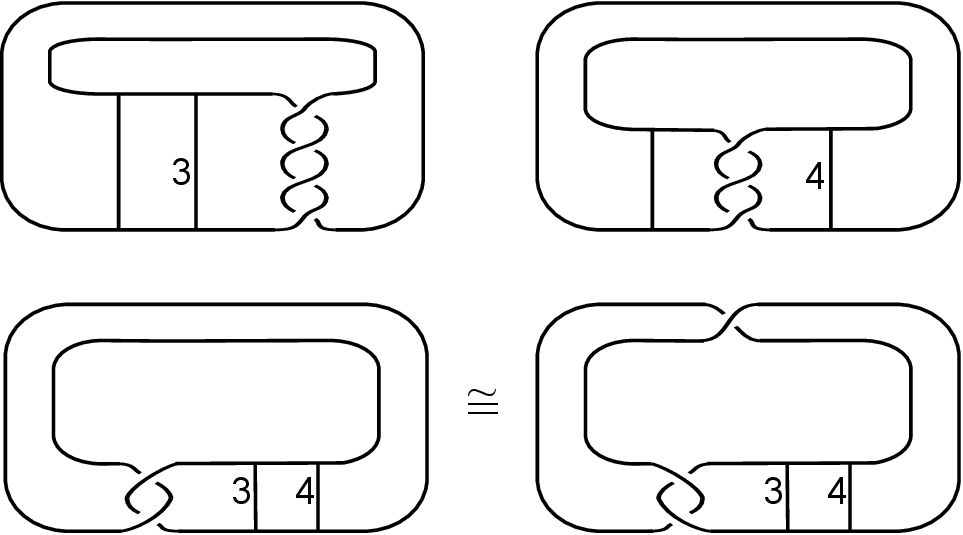}}

Figure 10
\end{center}

\noindent If $(n_1, n_2, n_3)=(2, 3, 5)$, then $(k, m_1, m_2,
m_3)=(0, 0, 0, \pm1)$,  $(0, 0, \pm1, 0)$,  $(0, \pm1, 0, 0)$ or
$(\pm1, \mp1, 0, 0)$. We present $(0, 0, 0, 1)$, $(0, 0, 1, 0)$ and
$(0, 1, 0, 0)$.

\begin{center}
\scalebox{0.5}{\includegraphics*[0pt,0pt][685pt,114pt]{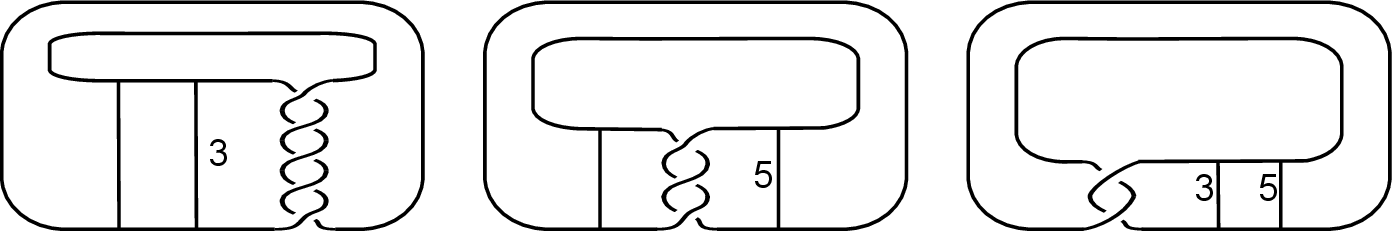}}

Figure  11
\end{center}

When $(n_1, n_2, n_3)=(2, 2, n)$, since $m_1$ and $m_2$ are
symmetric, by $(*), (**)$ we can assume $|m_1|=1, m_2=0$. Then
$n_1'=2, n_2'=1$, $m_1'=m_1, m_2'=0$, $d_1=1, d_2=2$. (1) becomes to
$$2kn_3'+m_1n_3'+2m_3'=1. \qquad (2)$$

If $k=0$, by $(*)$ we have $m_1=1$, $n_3'=1-2m_3'$. Let $m=|m_3'|$,
$d=d_3$. We have $(k, m_1, m_2, m_3, n)=(0, 1, 0, -md, (1+2m)d)$,
hence also $(0, -1, 0, md, (1+2m)d)$.

If $k=1$, by $(*)$ we have $m_1=-1$, $n_3'=1-2m_3'$. Let $m=|m_3'|$,
$d=d_3$. We have $(k, m_1, m_2, m_3, n)=(1, -1, 0, -md, (1+2m)d)$,
hence also $(-1, 1, 0, md, (1+2m)d)$.

Hence by symmetry and sign changing we have all the solutions:

\noindent $(k, m_1, m_2, m_3, n)=(0, \pm1, 0, \mp md, (1+2m)d)$,
$(\pm1, \mp1, 0, \mp md, (1+2m)d)$, $(0, 0, \pm1, \mp md, (1+2m)d)$,
or $(\pm1, 0, \mp1, \mp md, (1+2m)d)$, $d>2$. We present $(0, 1, 0,
-md, (1+2m)d)$, $m\neq0$, and $(0, 1, 0, 0, n)$, $(n>2)$.

\begin{center}
\scalebox{0.5}{\includegraphics*[0pt,0pt][512pt,155pt]{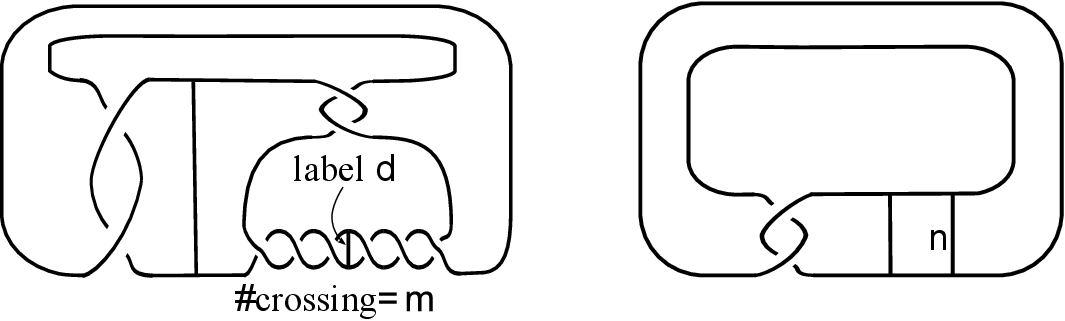}}

Figure 12
\end{center}

Denote by $\mathcal{O}$ the orbifold  presented by the left graph in
Figure 12. If we kill the edge which is labeled by $d$, we obtain an
orbifold which is fibred over $D^2(2, 2, 1+2m)$ with $1+2m\ge3$
\cite{Du1} (the 2-orbifold with base $D^2$ and three corner points
with labels $(2, 2, 1+2m)$. Then after killing the element
presenting the fiber, we obtain a surjection
$\pi_1(\mathcal{O})\rightarrow \pi_1(D^2(2, 2, 1+2m))$. The latter
group is non-abelian. But the fundament group of our 2-suborbifold,
if it exists, would be $\Z_2\times\Z_2$ after killing; this means
that we cannot find an allowable 2-suborbifold in $\mathcal{O}$.

When $(n_1, n_2, n_3)=(n, n, 1)$, $n_3'=n_3=1$, $m_3'=m_3=0$,
$d_3=1$. And (1) becomes
$$kn_1'n_2'+n_1'm_2'+n_2'm_1'=1\qquad (3).$$

By $(**)$, $\{d_1, d_2\}=\{2, d\}, (d>2)$, $\{3, 4\}$ or $\{3, 5\}$.
By symmetry we can assume $d_1<d_2$.

If $k=0$, we have
$$nm_2+nm_1=d_1d_2 \qquad(4).$$

When $d_1=2$, $d_2=d>2$, by $(4)$ we have $n=d$ or $n=2d$.

If $n=d$, then $n_1'=d/2$, $n_2'=1$, hence by $(*)$ and $(3)$ we
have $m_1'=1$, $m_2'=0$. Let $n'=d/2>1$. We have $(k, m_1, m_2,
n)=(0, 2, 0, 2n'), n'>1$.

If $n=2d$, then $n_1'=d$, $n_2'=2$, hence by $(*)$ and $(3)$ we have
$m_2'=1$, $1-2m_1'=d>2$. Let $m=|m_1'|$, we have $(k, m_1, m_2,
n)=(0, -2m, 1+2m, 2(1+2m)), m>0$.

When $d_1=3$, $d_2=4$, by $(4)$ we have $n=12$. Then $n_1'=4$,
$n_2'=3$, hence by $(*)$ and $(3)$ we have $m_1'=-1$, $m_2'=1$. We
have $(k, m_1, m_2, n)=(0, -3, 4, 12)$.

When $d_1=3$, $d_2=5$, by $(4)$ we have $n=15$. Then $n_1'=5$,
$n_2'=3$, hence by $(*)$ and $(3)$ we have $m_1'=2$, $m_2'=-1$. We
have $(k, m_1, m_2, n)=(0, 6, -5, 15)$.

If $k=1$, we have
$$n^2+nm_2+nm_1=d_1d_2 \qquad(5).$$

When $d_1=2$, $d_2=d>2$, by $(5)$ we have $n=d$ or $n=2d$.

If $n=d$, then $n_1'=d/2$, $n_2'=1$, hence by $(*)$ and $(3)$ we
have $d/2=2$, $m_1'=-1$, $m_2'=0$. We have $(k, m_1, m_2, n)=(1, -2,
0, 4)$.

If $n=2d$, then $n_1'=d$, $n_2'=2$, hence by $(*)$ and $(3)$ we have
$m_2'=-1$, $1-2m_1'=d>2$. Let $m=|m_1'|$, we have $(k, m_1, m_2,
n)=(1, -2m, -1-2m, 2(1+2m)), m>0$.

When $d_1=3$, $d_2=4$, by $(5)$ we have $n=12$. Then $n_1'=4$,
$n_2'=3$, but this contradict to $(*)$ and $(3)$.

When $d_1=3$, $d_2=5$, by $(5)$ we have $n=15$. Then $n_1'=5$,
$n_2'=3$, also contradict to $(*)$ and $(3)$.

Now by symmetry and signs changing we list all the solutions when
$(n_1, n_2, n_3)=(n, n, 1)$.

First the solutions we get above adding sign changing solutions:

$(k, m_1, m_2, n)=(0, \pm2, 0, 2n')$, $(0, \pm2m, \mp(1+2m),
2(1+2m))$, $(0, \pm3, \mp4, 12)$, $(0, \pm6, \mp5, 15)$, $(\pm1,
\mp2, 0, 4)$, $(\pm1, \mp2m, \mp(1+2m), 2(1+2m))$.

Then the symmetry solutions:

$(k, m_1, m_2, n)=(0, 0, \pm2, 2n')$, $(0, \mp(1+2m), \pm2m,
2(1+2m))$, $(0, \pm4, \mp3, 12)$, $(0, \pm5, \mp6, 15)$, $(\pm1, 0,
\mp2, 4)$, $(\pm1, \mp(1+2m), \mp2m, 2(1+2m))$.

Here the parameters satisfies $n'>1, m>0$. Symmetry solutions give
us the same picture, signs changing solutions give us the mirror
picture, and a solution with $k=1$ always has the same picture of a
solution with $k=0$. We present $(0, -1-2m, 2m, 2+4m)(m>0)$, $(0, 0,
2, 2n')(n'>1)$, $(0, 4, -3, 12)$ and $(0, 5, -6, 15)$. Note that
each of the corresponding graphs is isotopic to a simple one as
indicated in Figure 13.

\begin{center}
\scalebox{0.4}{\includegraphics*[0pt,0pt][900pt,155pt]{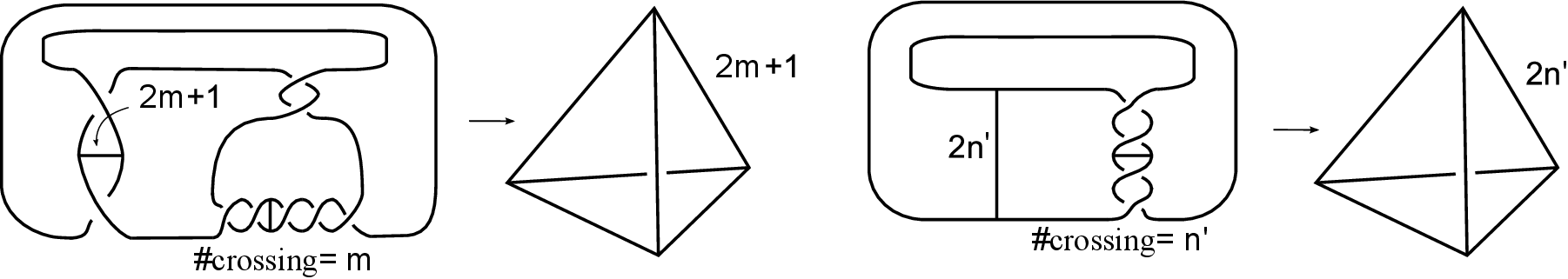}}

\scalebox{0.4}{\includegraphics*[0pt,0pt][900pt,145pt]{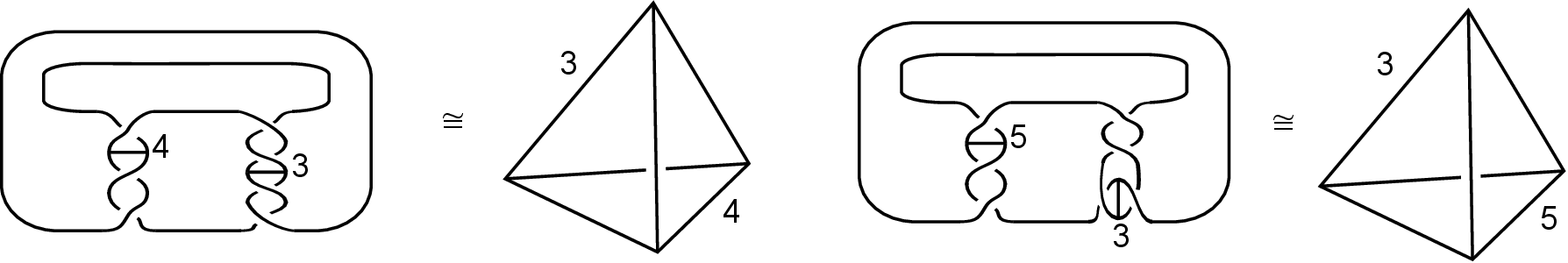}}

Figure 13
\end{center}

This finishes the discussion for Case 1.

For Case 2, $\mathcal{F}=\partial V$ has singular type $(2,2,3,3)$.
Hence $\Gamma\cap V$ must be a label $2$ arc adding two `strut
segments' with label $3$, or two arc with labels $2$ and $3$, see
Figure 14.

\begin{center}
\scalebox{0.6}{\includegraphics*[0pt,0pt][385pt,82pt]{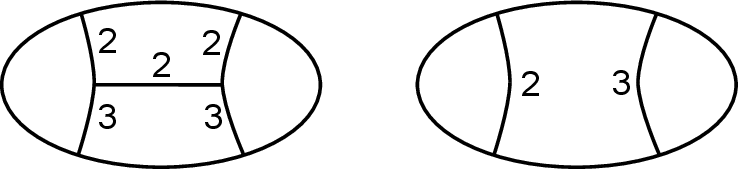}}

Figure 14
\end{center}

Then the orbifolds we find have the property that if we kill the
label $3$ `struts' or a label $3$ singular edge, we get a trivial
knot labeled by $2$.

By this property, the link cases (no vertices or struts), including
Table I now and also the graphs 14 and 16 in Table II, are easy to
handle. Graphs 04, 08, 10, 12, 13 and 14 are ruled out since each of
them has only one index. Graphs 03 and 09 are ruled out since after
killing an index 3 component (intersecting $V$), the remaining is
not a trivial knot.

For 01 we must have the two singular circle have index $2$ and $3$.
For 05 index $a$ must be $2$ and for 06 index $a$ must be $3$. For
02 index $f$ must be $3$ and $k$ is $\pm 1$, corresponds to the same
orbifold or a mirror image of 05. The orbifold presented by 16 and
02 are the same.

In Table II, there are two further graphs which possibly contain
exactly one `strut', the graphs 17 and 18. Now 17 is ruled out since
after killing the possible index $3$ `strut' (intersecting $V$), the
remaining is not a trivial knot. Concerning 18, up to mirror image,
the only possible graph is the link on the upper right hand side of
Figure 15, which presents all possible labeled links; the graph on
the lower left hand side comes from 02, 05 and 16, and the remaining
four graphs come from 01, 06, 07 and 11.

\begin{center}
\scalebox{0.5}{\includegraphics*[0pt,0pt][385pt,107pt]{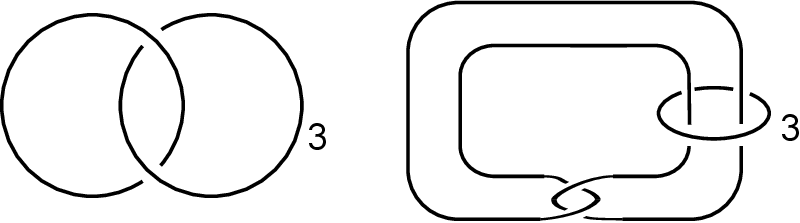}}

\scalebox{0.5}{\includegraphics*[0pt,0pt][700pt,150pt]{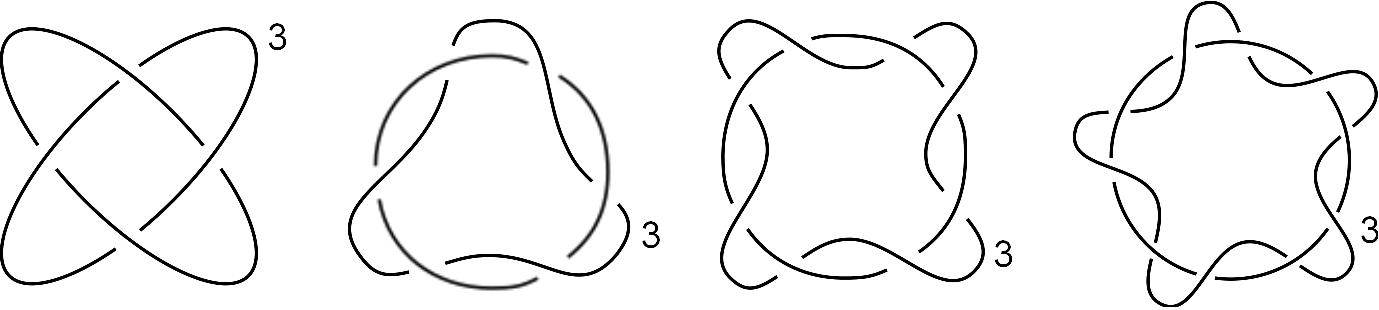}}

Figure 15
\end{center}

Concerning the possible `strut' cases, we still have to consider the
five graphs discussed in Case 1, but the case here is much simpler
since the possible `strut' can only label $3$. And after kill one or
two label $3$ `struts', we get a trivial knot labeled by $2$. Then
the case $(n_1, n_2, n_3)=(2, 3, 4)$ or $(2, 2, n)$ can be ruled
out, since after killing the index $3$ `struts', the remaining is
not a trivial knot.

We list the solutions and pictures below as in Case 1. Since most of
the solutions present the same graph or a mirror image we only
picture the graphs of non-homeomorphic orbifolds.

\noindent If $(n_1, n_2, n_3)=(2, 3, 3)$, then $(k, m_1, m_2,
m_3)=(0, \pm1, 0, 0)$, $(\pm1, \mp1, 0, 0)$, $(\pm1, \mp1, 0,
\mp1)$, $(0, \pm1, 0, \mp1)$, $(\pm1, \mp1, \mp1, 0)$ or $(0, \pm1,
\mp1, 0)$. We present $(0, 1, 0, 0)$ and $(0, -1, 0, 1)$.

\begin{center}
\scalebox{0.5}{\includegraphics*[0pt,0pt][710pt,114pt]{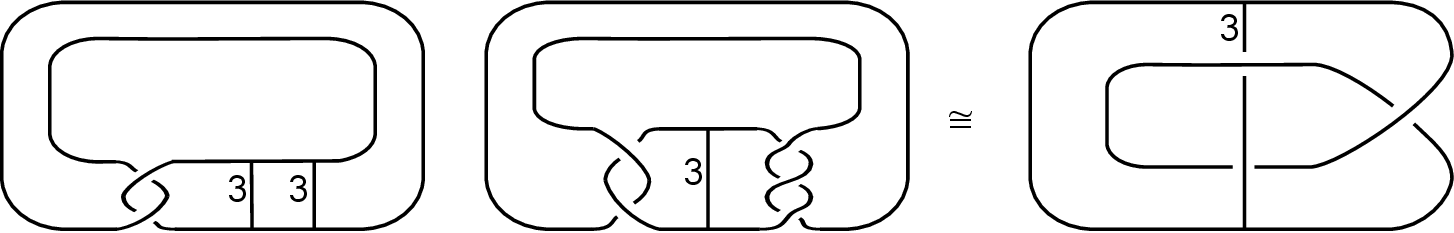}}

Figure 16
\end{center}

\noindent If $(n_1, n_2, n_3)=(2, 3, 5)$, $(k, m_1, m_2, m_3)=(\pm1,
\mp1, 0, \mp2)$ or $(0, \pm1, 0, \mp2)$. We present $(0, -1, 0, 2)$.

\begin{center}
\scalebox{0.5}{\includegraphics*[0pt,0pt][575pt,120pt]{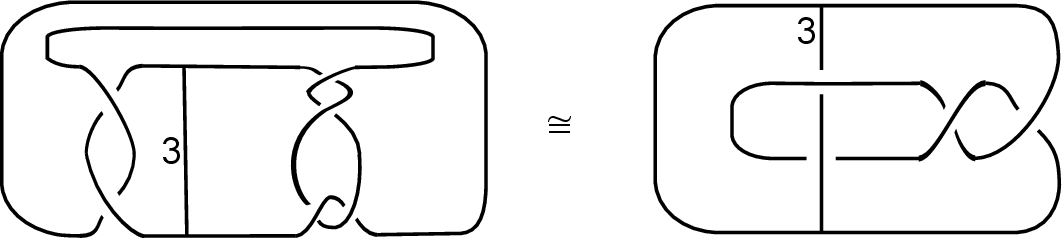}}

Figure 17
\end{center}

When $(n_1, n_2, n_3)=(n, n, 1)$, we need to solve equation $(3)$
under the following conditions:

$|2m_i|\le n_i$, $\{d_1, d_2\}=\{1, 3\}$ or $\{3, 3\}$. $(***)$

If $d_1=d_2=3$, then $n_1'=n_2'$. By $(3)$, we have $n_1'=n_2'=1$.
Hence $m_1'=m_2'=0$, $n=3$, and $k=1$. We have $(k, m_1, m_2, n)=(1,
0, 0, 3)$.

If $d_1=1$, $d_2=3$, then by $(4)$ and $(5)$ we have $n=3$. Hence
$n_1'=3$, $n_2'=1$, $m_2'=0$. Since $|2m_1'|\le n_1'$, by $(3)$,
$k=0$, $m_1'=1$. And we have $(k, m_1, m_2, n)=(0, 1, 0, 3)$.

All the solutions are $(k, m_1, m_2, n)=(\pm1, 0, 0, 3)$, $(0, \pm1,
0, 3)$ or $(0, 0, \pm1, 3)$. We present $(1, 0, 0, 3)$ and $(0, 0,
1, 3)$.

\begin{center}
\scalebox{0.5}{\includegraphics*[0pt,0pt][630pt,114pt]{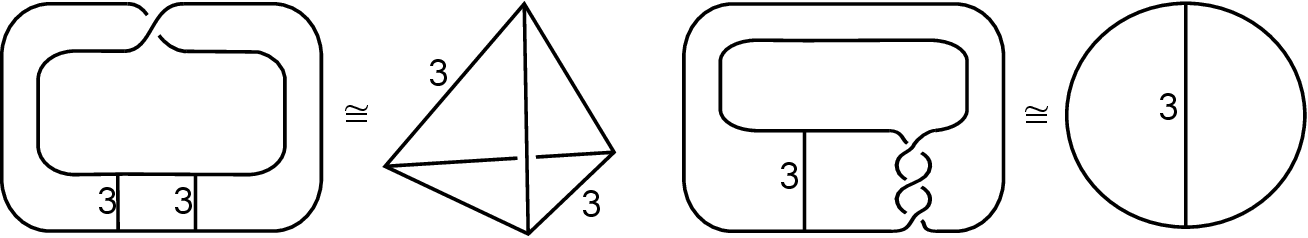}}

Figure 18
\end{center}

This finishes also the possible `strut' cases. Concluding, we have
found all fibered spherical 3-orbifolds in which an allowable
2-suborbifold might exist.

\section{List of allowable 2-suborbifolds}\label{2-orbifolds}
In this section our main result Theorem \ref{classify} will be
presented and proved. We are going to give some explanations and
conventions before we state Theorem \ref{classify}.

From now on, an edge always means an edge of $\Gamma$, the singular
set of the orbifold;  and a dashed arc is always a regular arc with
two ends at two edges of indices 2 and 3.

The primary part of Theorem \ref{classify} is the list of spherical
3-orbifolds which have survived after the discussion in Section
\ref{list 3-orbifolds}. This contains all the possible fibered
orbifolds in Table I and Table II, and all the non-fibered orbifolds
in Table III. We give each orbifold a label to indicate where it
comes from. For example, the first 3-orbifold in Table V is 15C (or
$\mathcal{O}_{15C}$), which comes from the orbifold 15 of Table II
in Dunbar's list.

We use edges and dashed arcs marked by letters $a, b, c\ldots$ to
denote 2-suborbifolds $\mathcal{F}_a, \mathcal{F}_b,
\mathcal{F}_c\ldots$ which are the boundaries of the regular
neighborhoods of these edges and arcs. We say two edges/dashed arcs
are equivalent if they can be mapped to each other by an index
preserving automorphism of $(S^3, \Gamma)$, or the boundaries of
their regular neighborhoods are orbifold isotopic to each other. That
means up to orbifold automorphism, they present the same
2-suborbifold. For an allowable 2-suborbifold, we will only mark one
edge/dashed arc in the equivalent class.

For each 3-orbifold in the list, we first give the order of its
fundamental group. Then we list allowable edges/dashed arcs and
write down the singular type of these allowable 2-suborbifold,
followed by the corresponding genus which can be computed by Lemma
\ref{compute genus}. When the singular type is $(2, 2, 3, 3)$, there
are two types denoted by I and II, corresponding to Figure 2(a) and
Figure 2(b), resp.

If the 2-suborbifold is a knotted one we give a foot notation `$k$'
to this edge or dashed arc. In the type II case, if a dashed arc can
be chosen as an unknotted one, then it can also be chosen as a
knotted one, and we add a foot notation `$uk$' to this arc.

We first list the fibred orbifolds which can contain allowable
2-suborbifold of type $(2, 2, 3, 3)$, then the fibred orbifolds that
can contain allowable 2-suborbifold of the other types, depending on
the discussion of section \ref{list 3-orbifolds}. Notice that these
two Tables have no intersection. Finally we list the non-fibred
orbifolds.

\begin{theorem}\label{classify}
Up to equivalence, the following table lists all allowable singular
edges/dashed arcs except those of type II. In the type II case, if
there exists an allowable dashed arc we give just one such dashed
arc, and this will be unknotted if there exists an unknotted one.
\end{theorem}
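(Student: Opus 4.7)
The plan is to proceed 3-orbifold by 3-orbifold through the list produced in Section~\ref{list 3-orbifolds} (the fibred survivors plus Dunbar's non-fibred Table III) and, for each one, carry out a finite enumeration. First I would list every singular edge and every dashed arc, and for each one compute the singular type of the boundary 2-orbifold of its regular neighborhood: by Lemma~\ref{handlebody shape}, every candidacy 2-sphere arises this way, and by Lemma~\ref{singular type} I only keep those whose type is $(2,2,2,n)$, $(2,2,3,3)$, $(2,2,3,4)$ or $(2,2,3,5)$. Then I would quotient by the (easily read off) index-preserving symmetries of $(S^3,\Gamma)$ so that each equivalence class contributes just one entry to the table.

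Next, for each surviving candidate $\mathcal{F}=\mathcal{F}_a$, I need to decide whether $\mathcal{F}\hookrightarrow(S^3,\Gamma)$ is allowable, i.e., whether its $S^3$-preimage is connected. By Corollary~\ref{connect surface} this is exactly the $\pi_1$-surjectivity of $\hat{i}_*$. I would split candidates into three groups. Group~(A): $\mathcal{F}$ bounds a handlebody orbifold on both sides of $(S^3,\Gamma)$, in which case Remark~\ref{surjective}(2) gives surjectivity for free; this handles all unknotted cases and is where most of the table comes from. Group~(B): $\mathcal{F}$ fails to be $\pi_1$-surjective; here I would apply the edge-killing Lemma~\ref{edge kill} to reduce $(S^3,\Gamma)$ to a simpler orbifold $(S^3,\Gamma')$ whose fundamental group I can identify, and I would check that the residual image of $\pi_1(\mathcal{F})$ (which is a quotient of the at most $\mathbb{Z}_2\times\mathbb{Z}_2$ four-punctured sphere orbifold group) cannot exhaust it; this is the same mechanism that eliminated $\mathcal{O}$ in Figure~12. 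Group~(C): $\mathcal{F}$ is a knotted $(2,2,3,3)$ sphere (type~II) which does not bound handlebodies on both sides. Here I would verify $\pi_1$-surjectivity by coset enumeration from a presentation of $\pi_1(S^3,\Gamma)$ together with the map induced by the four meridians of the punctures of $\mathcal{F}$; the paper flags three exceptional cases where coset enumeration alone is unwieldy and Lemma~\ref{surjection} is to be invoked instead.

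For the type~II $(2,2,3,3)$ entries the allowable-candidates are dashed arcs whose endpoints sit on edges of indices $2$ and $3$. Any such arc can be locally knotted arbitrarily (Figure~6), producing infinitely many allowable 2-suborbifolds with the same combinatorial data; following the statement's convention, for each equivalence class of dashed arcs I would record a single representative, choosing an unknotted one whenever one exists in that class. The labels $k$ (knotted only) and $uk$ (both knotted and unknotted realizations) can then be read off directly: $uk$ exactly when some representative of the class bounds handlebodies on both sides, $k$ when no such representative exists but a locally knotted realization is still $\pi_1$-surjective.

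The routine but laborious step is the case analysis itself, since the table has many rows; the real obstacle, and the one step I expect to be substantive, is Group~(C) above: checking $\pi_1$-surjectivity for the knotted candidates. Edge killing only gives one direction, so for each candidate where $\mathcal{F}$ does not obviously bound handlebodies on both sides I have to produce an actual coset enumeration (or appeal to Lemma~\ref{surjection}) showing that the four meridian images of $\pi_1(\mathcal{F})$ together generate $\pi_1(S^3,\Gamma)$. Organizing these enumerations uniformly---especially for the $6(g{-}1)$~II entries at $g=21,481$, which are the source of the surprising inequality $OE^u_g<OE^k_g$---is where the work concentrates, and the payoff is precisely the column of $k/uk$ annotations in the table.
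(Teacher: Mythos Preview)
Your overall plan matches the paper's proof: enumerate candidacy edges/arcs in each surviving 3-orbifold, reduce modulo symmetries, and decide allowability via $\pi_1$-surjectivity using Remark~\ref{surjective}(2) for unknotted cases, edge killing for negative cases, and coset enumeration for the remaining knotted positives. Two points, however, are mis-sorted and would cause your case analysis to miss or mishandle several rows of the table.

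First, your Group~(C) is too narrow. You restrict coset enumeration to ``knotted $(2,2,3,3)$ sphere (type~II)'' candidates, but many of the knotted \emph{edges} in the list are not of type~II (or even of singular type $(2,2,3,3)$) and still require coset enumeration: for instance $b$ in 20C, 22B, 22C are $(2,2,2,n)$ edges, and $b,c$ in 34 and $b,c,d,e,f$ in 38 are $(2,2,2,3)$ edges; the paper verifies each of these (and rules out $e,f$ in 38) by explicit GAP computations on Wirtinger presentations. Your trichotomy $(A)\cup(B)\cup(C)$ as written is not exhaustive, since a knotted allowable edge of type $(2,2,2,n)$ lies in none of your groups.

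Second, you have the role of Lemma~\ref{surjection} inverted. It is not a substitute for coset enumeration to \emph{confirm} surjectivity; it is a \emph{negative} tool used to show that in orbifolds 23, 29, 30 no dashed arc whatsoever can be allowable. The point is that $\pi_1$ of these orbifolds surjects onto $T\times_{C_3}T$, $O\times_{D_3}O$, $J\times J$ respectively, and Lemma~\ref{surjection} says an order~2 and an order~3 element can never generate such a group; hence no $(2,2,3,3)$ type~II candidate is $\pi_1$-surjective there. This belongs in your Group~(B), alongside edge killing, as a second obstruction technique for the cases where edge killing alone does not suffice.
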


\begin{center}
\vskip 0.1 true cm

Table IV: Fibred case: type is $(2, 2, 3, 3)$

\vskip 0.3 true cm

\scalebox{0.4}{\includegraphics*[0pt,0pt][152pt,152pt]{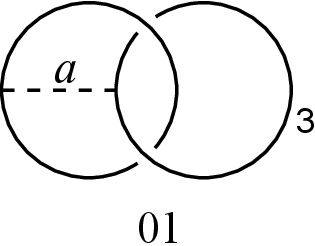}}
\raisebox{45pt} {\parbox[t]{102pt}{$|G|=6$\\$a_{uk}$: II, $g=2$}}
\scalebox{0.4}{\includegraphics*[0pt,0pt][152pt,152pt]{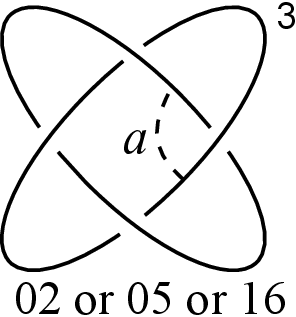}}
\raisebox{45pt} {\parbox[t]{102pt}{$|G|=18$\\$a_{uk}$: II, $g=4$}}
\end{center}

\begin{center}
\scalebox{0.4}{\includegraphics*[0pt,0pt][152pt,152pt]{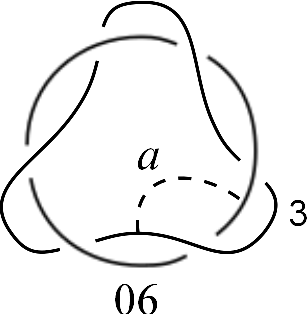}}
\raisebox{45pt} {\parbox[t]{102pt}{$|G|=48$\\$a_{uk}$: II, $g=9$}}
\scalebox{0.4}{\includegraphics*[0pt,0pt][152pt,152pt]{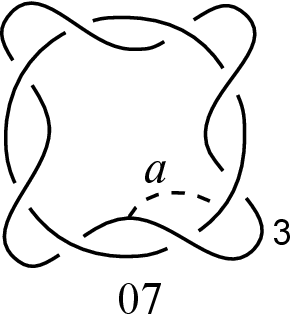}}
\raisebox{45pt} {\parbox[t]{102pt}{$|G|=144$\\$a_{uk}$: II, $g=25$}}
\end{center}

\begin{center}
\scalebox{0.4}{\includegraphics*[0pt,0pt][152pt,152pt]{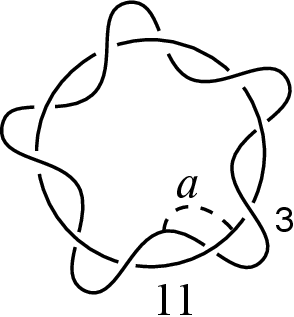}}
\raisebox{45pt} {\parbox[t]{102pt}{$|G|=720$\\$a_{uk}$: II,
$g=121$}}
\scalebox{0.4}{\includegraphics*[0pt,0pt][152pt,152pt]{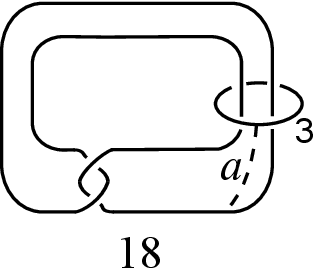}}
\raisebox{45pt} {\parbox[t]{102pt}{$|G|=144$\\$a_{uk}$: II, $g=25$}}
\end{center}

\begin{center}
\scalebox{0.4}{\includegraphics*[0pt,0pt][152pt,152pt]{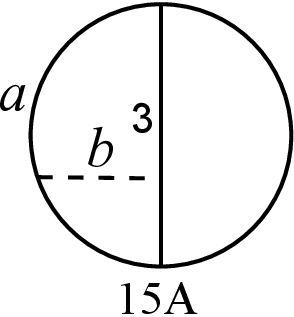}}
\raisebox{45pt} {\parbox[t]{102pt}{$|G|=6$\\$a$: I, $g=2$\\$b_{uk}$:
II, $g=2$}}
\scalebox{0.4}{\includegraphics*[0pt,0pt][152pt,152pt]{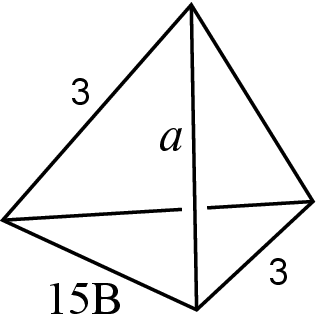}}
\raisebox{45pt} {\parbox[t]{102pt}{$|G|=18$\\$a$: I, $g=4$}}
\end{center}

\begin{center}
\scalebox{0.4}{\includegraphics*[0pt,0pt][152pt,152pt]{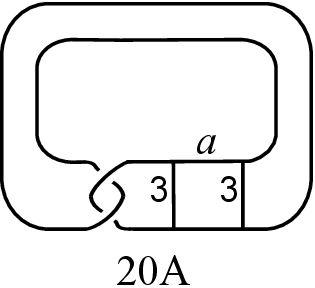}}
\raisebox{45pt} {\parbox[t]{102pt}{$|G|=144$\\$a$: I, $g=25$}}
\scalebox{0.4}{\includegraphics*[0pt,0pt][152pt,152pt]{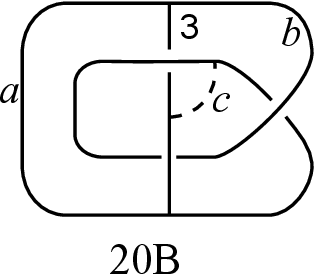}}
\raisebox{45pt} {\parbox[t]{102pt}{$|G|=48$\\$a$: I, $g=9$\\$b_k$:
I, $g=9$\\$c_{uk}$: II, $g=9$}}
\end{center}

\begin{center}
\scalebox{0.4}{\includegraphics*[0pt,0pt][152pt,152pt]{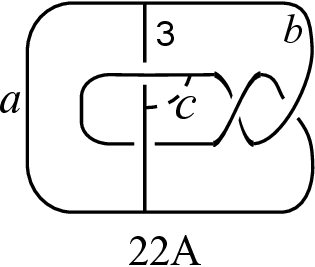}}
\raisebox{45pt} {\parbox[t]{102pt}{$|G|=720$\\$a$: I,
$g=121$\\$b_k$: I, $g=121$\\$c_{uk}$: II, $g=121$}} \hspace*{170pt}
\end{center}

\begin{center}
Table V: Fibred case: type is not $(2, 2, 3, 3)$

\vskip 0.3 true cm

\scalebox{0.4}{\includegraphics*[0pt,0pt][152pt,152pt]{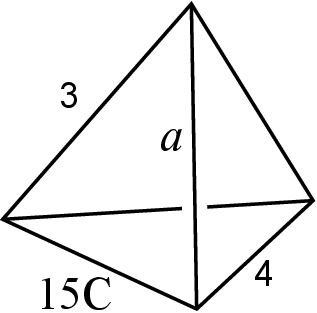}}
\raisebox{45pt} {\parbox[t]{102pt}{$|G|=24$\\$a$:(2,2,3,4), $g=6$}}
\scalebox{0.4}{\includegraphics*[0pt,0pt][152pt,152pt]{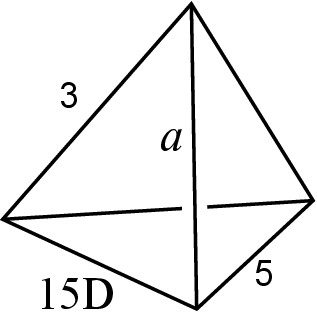}}
\raisebox{45pt} {\parbox[t]{102pt}{$|G|=30$\\$a$:(2,2,3,5), $g=8$}}
\end{center}

\begin{center}
\scalebox{0.4}{\includegraphics*[0pt,0pt][152pt,152pt]{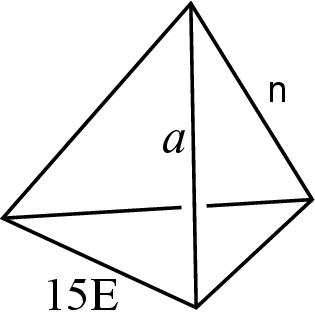}}
\raisebox{45pt}{\parbox[t]{102pt}{$|G|=4n$\\$a$:(2,2,2,n)\\$g=n-1$}}
\scalebox{0.4}{\includegraphics*[0pt,0pt][152pt,152pt]{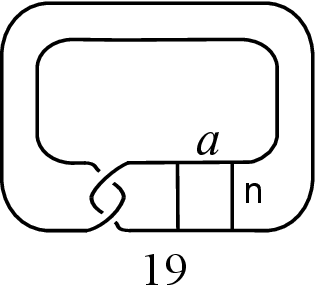}}
\raisebox{45pt}{\parbox[t]{102pt}{$|G|=4n^2$\\$a$:(2,2,2,n)\\$g=(n-1)^2$}}
\end{center}

\begin{center}
\scalebox{0.4}{\includegraphics*[0pt,0pt][152pt,152pt]{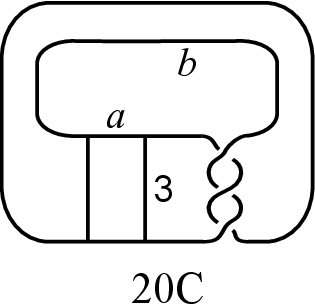}}
\raisebox{45pt} {\parbox[t]{102pt}{$|G|=96$\\$a$:(2,2,2,3),
$g=9$\\$b_{k}$:(2,2,2,3), $g=9$}}
\scalebox{0.4}{\includegraphics*[0pt,0pt][152pt,152pt]{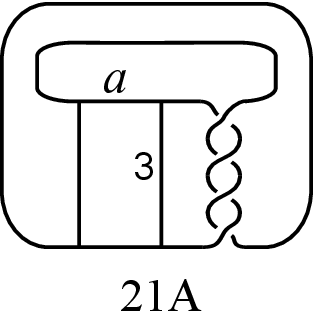}}
\raisebox{45pt} {\parbox[t]{102pt}{$|G|=288$\\$a$:(2,2,2,3),
$g=25$}}
\end{center}

\begin{center}
\scalebox{0.4}{\includegraphics*[0pt,0pt][152pt,152pt]{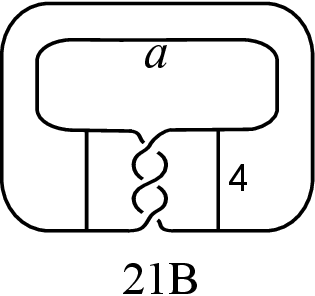}}
\raisebox{45pt} {\parbox[t]{102pt}{$|G|=384$\\$a$:(2,2,2,4),
$g=49$}}
\scalebox{0.4}{\includegraphics*[0pt,0pt][152pt,152pt]{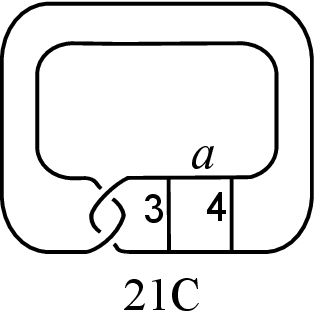}}
\raisebox{45pt} {\parbox[t]{102pt}{$|G|=576$\\$a$:(2,2,3,4),
$g=121$}}
\end{center}

\begin{center}
\scalebox{0.4}{\includegraphics*[0pt,0pt][152pt,152pt]{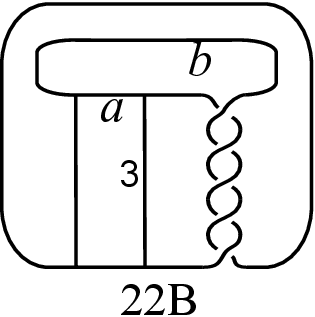}}
\raisebox{45pt} {\parbox[t]{102pt}{$|G|=1440$\\$a$:(2,2,2,3),
$g=121$\\$b_{k}$:(2,2,2,3), $g=121$}}
\scalebox{0.4}{\includegraphics*[0pt,0pt][152pt,152pt]{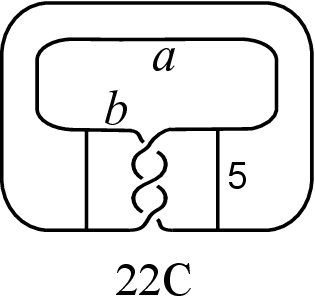}}
\raisebox{45pt} {\parbox[t]{102pt}{$|G|=2400$\\$a$:(2,2,2,5),
$g=361$\\$b_{k}$:(2,2,2,5), $g=361$}}
\end{center}

\begin{center}
\scalebox{0.4}{\includegraphics*[0pt,0pt][152pt,152pt]{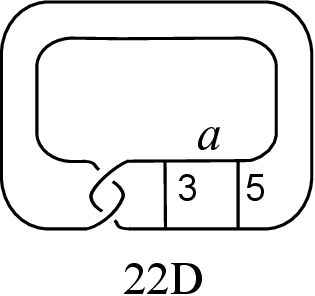}}
\raisebox{45pt} {\parbox[t]{102pt}{$|G|=3600$\\$a$:(2,2,3,5),
$g=841$}}\hspace*{170pt}
\end{center}

\begin{center}
Table VI: Non-fibred case

\vskip 0.3 true cm

\scalebox{0.4}{\includegraphics*[0pt,0pt][152pt,152pt]{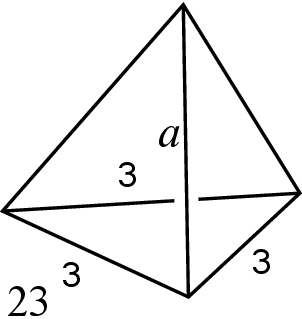}}
\raisebox{45pt} {\parbox[t]{102pt}{$|G|=96$\\$a$: I, $g=17$}}
\scalebox{0.4}{\includegraphics*[0pt,0pt][152pt,152pt]{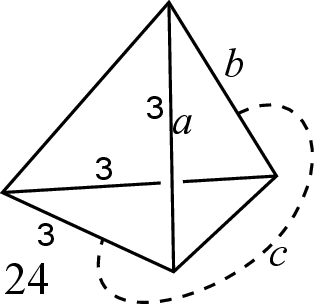}}
\raisebox{45pt} {\parbox[t]{102pt}{$|G|=60$\\$a$:(2,2,2,3),
$g=6$\\$b$: I, $g=11$\\$c_{k}$: II, $g=11$}}
\end{center}

\begin{center}
\scalebox{0.4}{\includegraphics*[0pt,0pt][152pt,152pt]{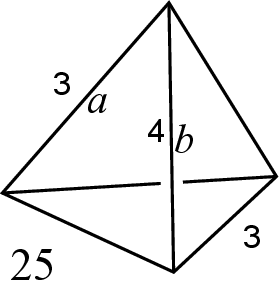}}
\raisebox{45pt} {\parbox[t]{102pt}{$|G|=576$\\$a$:(2,2,2,4),
$g=73$\\$b$: I, $g=97$}}
\scalebox{0.4}{\includegraphics*[0pt,0pt][152pt,152pt]{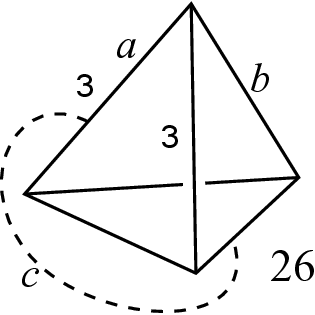}}
\raisebox{45pt} {\parbox[t]{102pt}{$|G|=24$\\$a$:(2,2,2,3),
$g=3$\\$b$: I, $g=5$\\$c_k$: II, $g=5$}}
\end{center}

\begin{center}
\scalebox{0.4}{\includegraphics*[0pt,0pt][152pt,152pt]{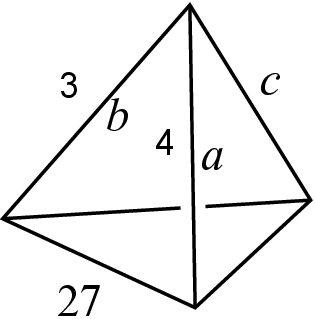}}
\raisebox{45pt} {\parbox[t]{102pt}{$|G|=48$\\$a$:(2,2,2,3),
$g=5$\\$b$:(2,2,2,4), $g=7$\\$c$:(2,2,3,4), $g=11$}}
\scalebox{0.4}{\includegraphics*[0pt,0pt][152pt,152pt]{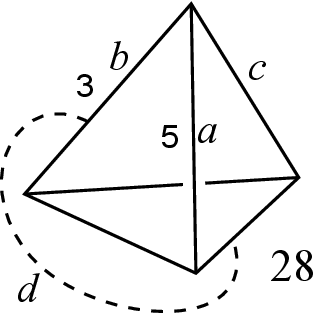}}
\raisebox{45pt} {\parbox[t]{102pt}{$|G|=120$\\$a$:(2,2,2,3),
$g=11$\\$b$:(2,2,2,5), $g=19$\\$c$:(2,2,3,5), $g=29$\\$d_{k}$: II,
$g=21$}}
\end{center}

\begin{center}
\scalebox{0.4}{\includegraphics*[0pt,0pt][152pt,152pt]{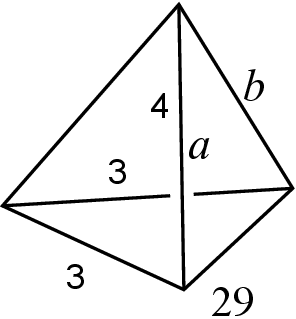}}
\raisebox{45pt} {\parbox[t]{102pt}{$|G|=192$\\$a$:(2,2,2,3),
$g=17$\\$b$:(2,2,3,4), $g=41$}}
\scalebox{0.4}{\includegraphics*[0pt,0pt][152pt,152pt]{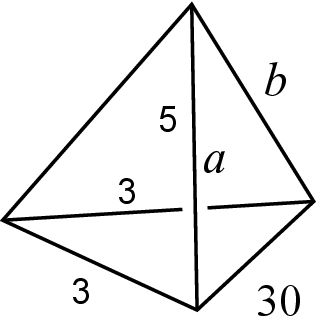}}
\raisebox{45pt} {\parbox[t]{102pt}{$|G|=7200$\\$a$:(2,2,2,3),
$g=601$\\$b$:(2,2,3,5), $g=1681$}}
\end{center}

\begin{center}
\scalebox{0.4}{\includegraphics*[0pt,0pt][152pt,152pt]{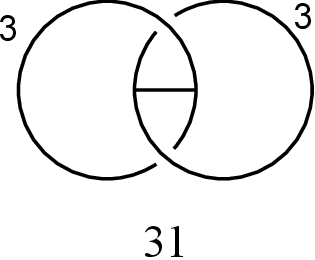}}
\raisebox{45pt} {\parbox[t]{102pt}{$|G|=288$\\No allowable\\
2-suborbifold}}
\scalebox{0.4}{\includegraphics*[0pt,0pt][152pt,152pt]{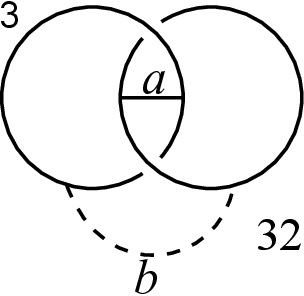}}
\raisebox{45pt} {\parbox[t]{102pt}{$|G|=24$\\$a$: I,
$g=5$\\$b_{uk}$: II, $g=5$}}
\end{center}

\begin{center}
\scalebox{0.4}{\includegraphics*[0pt,0pt][152pt,152pt]{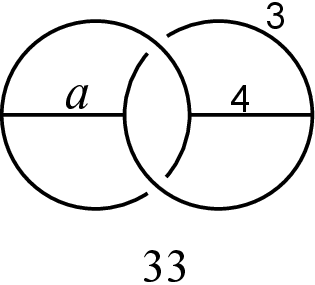}}
\raisebox{45pt} {\parbox[t]{102pt}{$|G|=1152$\\$a$:(2,2,2,3),
$g=97$}}
\scalebox{0.4}{\includegraphics*[0pt,0pt][152pt,152pt]{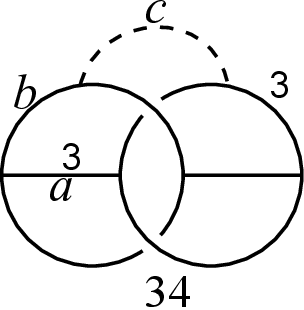}}
\raisebox{45pt} {\parbox[t]{102pt}{$|G|=120$\\$a$:(2,2,2,3),
$g=11$\\$b_{k}$:(2,2,2,3), $g=11$\\$c_{k}$: II, $g=21$}}
\end{center}

\begin{center}
\scalebox{0.4}{\includegraphics*[0pt,0pt][152pt,152pt]{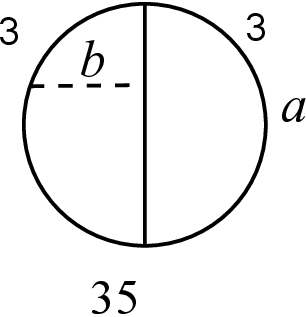}}
\raisebox{45pt} {\parbox[t]{102pt}{$|G|=12$\\$a$: I,
$g=3$\\$b_{uk}$: II, $g=3$}}
\scalebox{0.4}{\includegraphics*[0pt,0pt][152pt,152pt]{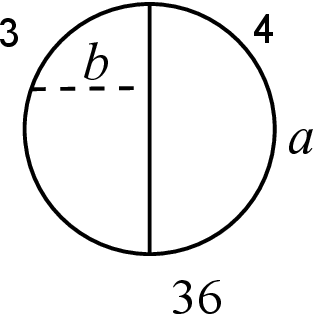}}
\raisebox{45pt} {\parbox[t]{102pt}{$|G|=24$\\$a$: I,
$g=5$\\$b_{uk}$: II, $g=5$}}
\end{center}

\begin{center}
\scalebox{0.4}{\includegraphics*[0pt,0pt][152pt,152pt]{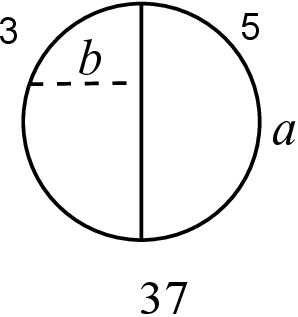}}
\raisebox{45pt} {\parbox[t]{102pt}{$|G|=60$\\$a$: I,
$g=11$\\$b_{uk}$: II, $g=11$}}
\scalebox{0.4}{\includegraphics*[0pt,0pt][152pt,152pt]{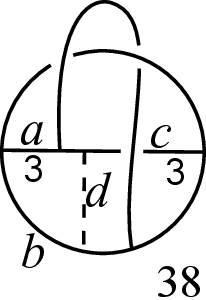}}
\raisebox{45pt} {\parbox[t]{102pt}{$|G|=2880$\\$a, b_k,
c_k$:(2,2,2,3)\\$g=241$\\$d_k$: II, $g=481$}}
\end{center}

\begin{center}
\scalebox{0.4}{\includegraphics*[0pt,0pt][152pt,152pt]{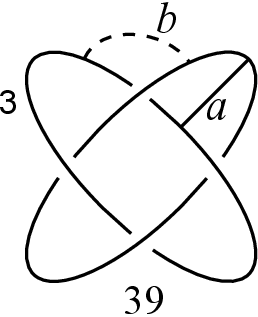}}
\raisebox{45pt} {\parbox[t]{102pt}{$|G|=576$\\$a$: I,
$g=97$\\$b_{uk}$: II, $g=97$}}
\scalebox{0.4}{\includegraphics*[0pt,0pt][152pt,152pt]{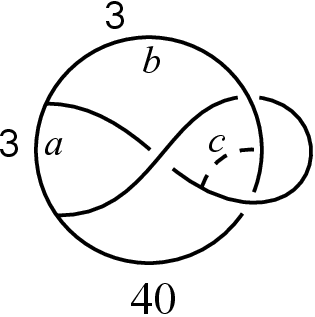}}
\raisebox{45pt} {\parbox[t]{102pt}{$|G|=1440$\\$a$: I,
$g=241$\\$b_k$: I, $g=241$\\$c_{uk}$: II, $g=241$}}
\end{center}

\begin{remark}
From the list it is easy to see when $g = 21$ or $g = 481$ the
maximal order can only be realized by a knotted embedding. The
orbifolds corresponding to these situations are $\mathcal{O}_{28}$,
$\mathcal{O}_{34}$ and $\mathcal{O}_{38}$.
\end{remark}

\begin{proof}
One can easily check that the list of 3-orbifolds in Theorem
\ref{classify} contains exactly those in Tables I and II which
survive after Section \ref{list 3-orbifolds}, and those in Table
III. By the discussion of Section \ref{list 3-orbifolds}, all
allowable 2-orbifolds are contained in one of these 3-orbifolds.

Then we need compute the orders of fundamental groups of these
3-orbifolds. There are two infinity sequences of 3-orbifolds, 15E
and 19 in the above list. For these two sequences, we know the group
actions on the pair $(S^3,\Sigma_g)$ clearly, see Example \ref{cage}
below, also see \cite{WWZZ}. The other finitely many orders of the
fundamental groups can be calculated directly from their Wirtinger
presentations of the graphs (or see proposition 5 in \cite{Du1} for
a presentation) and by using \cite{GAP}. For the non-fibred case,
the group orders can also be got from the group structure, see
\cite{Du2}.

For the details about how to use the computer software \cite{GAP}, one can see Section 8.

\begin{example}\label{O34}
We give here an example to explain how we get the group order. This
is the orbifold $\mathcal{O}_{34}$ in the above list. It is a
non-fibred spherical orbifold.

\begin{center}
\scalebox{0.6}{\includegraphics{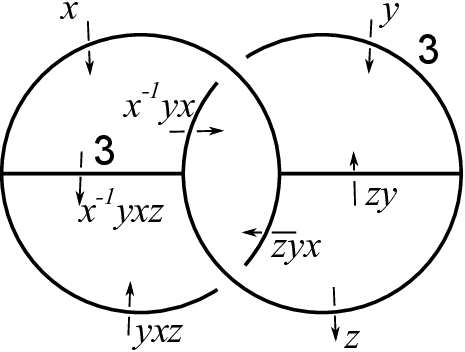}}

Figure 19
\end{center}

From Figure 19 we obtain the following presentation of the orbifold
fundamental group of $\mathcal{O}_{34}$:
$$\pi_1(\mathcal{O}_{34})=\langle x, y, z \mid x^2,y^3,z^2,(zy)^2,(yxz)^2,(yxzx)^3\rangle$$
We input these generators and relations into \cite{GAP}, and the
computer uses the standard procedure (called coset enumeration) to
show the group order is $120$.

There is also another way to get this. In \cite{Du2} we know
$\pi_1(\mathcal{O}_{34})\cong
\mathbf{J}\times^\mathbf{*}_{\mathbf{J}}\mathbf{J}$, which can map
surjectively to $J\times^*_J J$ under the two to one homomorphism
$SO(4)\rightarrow SO(3)\times SO(3)$. Hence
$|\pi_1(\mathcal{O}_{34})|=2\times 60 \times 60\div 60=120$.
\end{example}

Next we will check graph by graph in the above list as following:

Step 1: Up to equivalence, list all candidacy edges (see Definition
4.2).

Step 2: Check whether the edges given in Step 1 is allowable.

Unknotted edges automatically satisfies the surjection condition
(Remark \ref{surjective}). For knotted edges, we use the so-called
coset enumeration method (\cite{Ro}, p. 351, Chapter 11) to verify
the surjectivity. For a knotted allowable edge, we give it a label
`k'. After these two Steps we can list all allowable edges.

Step 3: Try to find an unknotted candidacy dashed arc.

If there is one, then the surjection condition is automatically
satisfied, and we labeled the dashed arc by `uk'. Otherwise we will
prove there is no unknotted allowable dashed arc.

Step 4: If there is no unknotted allowable dashed arc, then we will
try to find a knotted allowable dashed arc and give it a label `k'.

Step 5: If in an 3-orbifold in the list there is no marked dashed
arc, we will proof there is no allowable dashed arc in the orbifold.

After we list all the allowable edges and dashed arcs, the genus can
be computed from the group order and the singular type by Lemma
\ref{compute genus}. And the proof will be finished.

We firstly check Table IV, then Table V, and finally Table VI.

\noindent {\bf Orbifolds in Table IV:}

01, 02, 06, 07, 11, 18: In these six orbifolds there are no
candidacy edges and the possible unknotted dashed arcs have been
marked.

15A: Up to equivalence, there is only one candidacy edge $a$ which
is unknotted, hence allowable. And there is an unknotted allowable
dashed arc $b$.

15B: Up to equivalence, there is only one candidacy edge $a$ which
is unknotted, hence allowable. For a candidacy dashed arc, one end
of it must lie on an edge labeled $3$. Kill this edge, then
$\pi_1(\mathcal{F})=\Z_2$, but $\pi_1((S^3, \Gamma'))=D_3$. Hence
the dashed arc can not be allowable. There is no allowable dashed
arc.

20A: Up to equivalence, there are two candidacy edges. $a$ is
unknotted, hence allowable. The other one has its two vertices on
the same singular edge labeled $3$. Kill this edge, then
$\pi_1(\mathcal{F})=\Z_2$, but $\pi_1((S^3, \Gamma'))=D_3$. Hence it
is not allowable. Similarly, for a candidacy dashed arc, one end of
it must lie on one edge labeled $3$. Kill this edge, then
$\pi_1(\mathcal{F})=\Z_2$, but $\pi_1((S^3, \Gamma'))=D_3$. We know
there is no allowable dashed arc.

20B, 22A: There are two candidacy edges $a$ and $b$. $a$ is
unknotted. We also give an unknotted candidacy dashed arc $c$. Hence
$a$ and $c$ are allowable. We need \cite{GAP} to check that the
knotted edge $b$ is also allowable.

For 20B,
\begin{center}
\scalebox{0.8}{\includegraphics{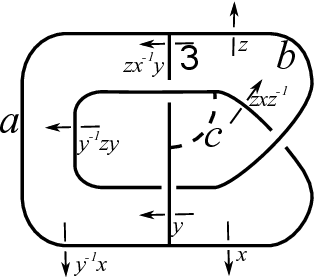}}

Figure 20
\end{center}

$$\pi_1(\mathcal{O}_{20B})=\langle x,y,z| y^{-1}zyzx^{-1}z^{-1},z^2,(y^{-1}x)^2,y^3\rangle.$$

For $b$ in 20B, the corresponding subgroup is
$$i_*(\pi_1(\mathcal{F}_b))=\langle z, y^{-1}x, yzyz^{-1}y^{-1}\rangle.$$

By \cite{GAP}, the computer can show that
$[\pi_1(\mathcal{O}_{20B}):i_*(\pi_1(\mathcal{F}_b))]=1$, so $b$ is
allowable.

For 22A,
\begin{center}
\scalebox{0.8}{\includegraphics{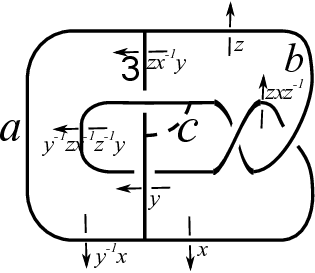}}

Figure 21
\end{center}

$$\pi_1(\mathcal{O}_{22A})=\langle x,y,z|y^{-1}zx^{-1}z^{-1}yzx^{-1}zxz^{-1},
z^2,(y^{-1}x)^2,y^3\rangle.$$

For $b$ in 22A, the corresponding subgroup is
$$i_*(\pi_1(\mathcal{F}_b))=\langle z, y^{-1}x, (zxz^{-1}y^{-1}z)y(zxz^{-1}y^{-1}z)^{-1}\rangle.$$

By \cite{GAP}, the computer can show that
$[\pi_1(\mathcal{O}_{22A}):i_*(\pi_1(\mathcal{F}_b))]=1$, so $b$ is
allowable.

\noindent {\bf Orbifolds in Table V:}

By the discussion in Section \ref{list 3-orbifolds}, orbifolds in
Table V contains no allowable edges/dashed arcs of type $(2, 2, 3,
3)$, hence contains no allowable dashed arc. We need only list all
allowable edges.

15C, 15D, 15E: Up to equivalence, there is only one candidacy edge
$a$ which is unknotted, hence allowable.

19, 21A, 21C, 22D: Up to equivalence, there are two candidacy edges.
$a$ is unknotted, hence allowable. The other one has its two
vertices on the same labeled $3$ singular edge, hence has type $(2,
2, 3, 3)$ and is not allowable by the discussion in Section
\ref{list 3-orbifolds}.

20C, 22B, 22C: Up to equivalence, there are two candidacy edges. $a$
is unknotted, hence allowable. $b$ is knotted, we need \cite{GAP} to
check it is allowable.

For 20C,
\begin{center}
\scalebox{0.8}{\includegraphics{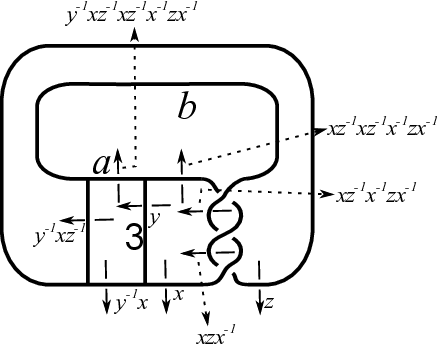}}

Figure 22
\end{center}

$$\pi_1(\mathcal{O}_{20C})=\langle x,y,z|
x^2,z^2,y^3,(y^{-1}x)^2,(y^{-1}xz^{-1})^2,(y^{-1}xz^{-1}xz^{-1}x^{-1}zx^{-1})^2\rangle.$$

For $b$ in 20C, the corresponding subgroup is
$$i_*(\pi_1(\mathcal{F}_b))=\langle xz^{-1}x^{-1}zx^{-1}, y^{-1}xz^{-1},(xz^{-1}x^{-1})y(xz^{-1}x^{-1})^{-1}\rangle.$$

By \cite{GAP}, the computer can show that
$[\pi_1(\mathcal{O}_{20C}):i_*(\pi_1(\mathcal{F}_b))]=1$, so $b$ is
allowable.

Below in Figure 23 and 24 we have used the relation $x^2=z^2=1$ to
simplify the notations.

For 22B,
\begin{center}
\scalebox{0.8}{\includegraphics{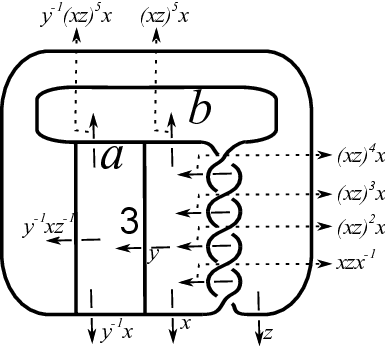}}

Figure 23
\end{center}

$$\pi_1(\mathcal{O}_{22B})=\langle x,y,z| x^2,y^3,z^2,(y^{-1}x)^2,(y^{-1}(xz)^5x)^2,(y^{-1}xz^{-1})^2\rangle.$$

For $b$ in 22B,  the corresponding subgroup is
$$i_*(\pi_1(\mathcal{F}_b))=\langle y^{-1}(xz)^5x, y^{-1}xz^{-1},(xzxz)y(xzxz)^{-1}\rangle.$$

By \cite{GAP}, the computer can show that
$[\pi_1(\mathcal{O}_{22B}):i_*(\pi_1(\mathcal{F}_b))]=1$, so $b$ is
allowable.

For 22C,
\begin{center}
\scalebox{0.8}{\includegraphics{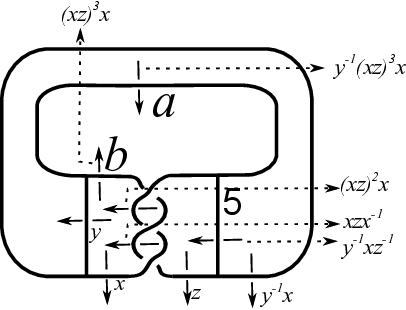}}

Figure 24
\end{center}

$$\pi_1(\mathcal{O}_{22C})=\langle x,y,z| x^2,y^2,z^2,(y^{-1}x)^2,(y^{-1}(xz)^3x)^2,(y^{-1}xz^{-1})^5\rangle.$$

For $b$ in 22C, the corresponding subgroup is
$$i_*(\pi_1(\mathcal{F}_b))=\langle y, y^{-1}(xz)^3x,(xzxz)y^{-1}x(xzxz)^{-1}\rangle.$$

By \cite{GAP}, the computer can show that
$[\pi_1(\mathcal{O}_{22C}):i_*(\pi_1(\mathcal{F}_b))]=1$, so $b$ is
allowable.

21B: There are two classes of candidacy edges, $\{a,a'\}$ and
$\{b,c\}$. $a$ is unknotted, hence allowable. To see $b$ is not
allowable, kill $a$ and $a'$, then $\pi_1(\mathcal{F})=\Z_2$, but
$\pi_1((S^3, \Gamma'))$ is $D_3$.

\begin{center}
\scalebox{0.6}{\includegraphics{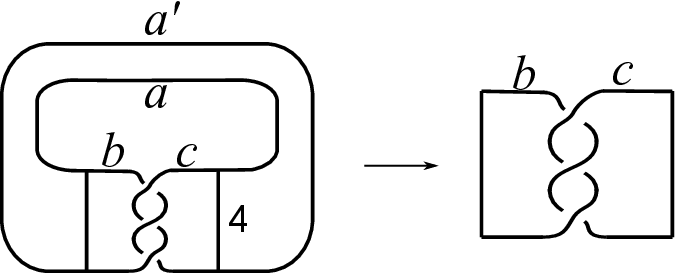}}

Figure 25
\end{center}

\noindent {\bf Orbifolds in Table VI:}

25, 27: All candidacy edges are unknotted and have been marked as in
the orbifold, up to equivalence. For a candidacy dashed arc, one end
of it must lie on some edge labeled $3$. Kill this edge, then
$\pi_1(\mathcal{F})=\Z_2$, but $\pi_1((S^3, \Gamma'))=D_3$. Hence
there is no allowable dashed arc.

24, 26, 28: All candidacy edges are unknotted and have been marked
as in the orbifold, up to equivalence. For a candidacy dashed arc
corresponding to a 2-suborbifold $\mathcal{F}$, the other side of
$\mathcal{F}$, containing four vertices, can not be a handlebody
orbifold. Hence there is no unknotted allowable dashed arc. Then we
need \cite{GAP} to check that $c_k$ in 24 and 26, $d_k$ in 28 are
knotted allowable dashed arcs.

For 24,
\begin{center}
\scalebox{0.8}{\includegraphics{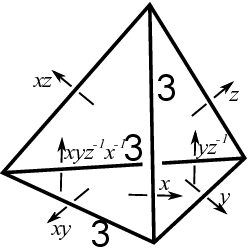}}

Figure 26
\end{center}

$$\pi_1(\mathcal{O}_{24})=\langle x,y,z| x^3,y^2,z^2,(xz)^2,(xy)^3,(yz^{-1})^3\rangle.$$

For $c$ in 24, the corresponding subgroup is
$$i_*(\pi_1(\mathcal{F}_c))=\langle xy, z\rangle.$$

By \cite{GAP}, the computer can show that
$[\pi_1(\mathcal{O}_{24}):i_*(\pi_1(\mathcal{F}_c))]=1$, so $c$ is
allowable.

For 26,
\begin{center}
\scalebox{0.8}{\includegraphics{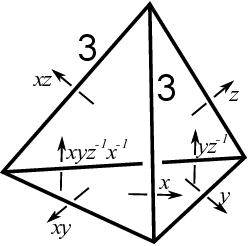}}

Figure 27
\end{center}

$$\pi_1(\mathcal{O}_{26})=\langle x,y,z| x^3,y^2,z^2,(xz)^3,(xy)^2,(yz^{-1})^2\rangle.$$

For $c$ in 26, the corresponding subgroup is
$$i_*(\pi_1(\mathcal{F}_c))=\langle xz, y\rangle.$$

By \cite{GAP}, the computer can show that
$[\pi_1(\mathcal{O}_{26}):i_*(\pi_1(\mathcal{F}_c))]=1$, so $c$ is
allowable.

For 28,
\begin{center}
\scalebox{0.8}{\includegraphics{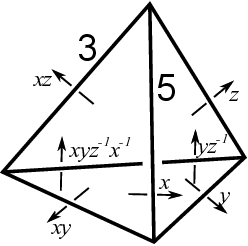}}

Figure 28
\end{center}

$$\pi_1(\mathcal{O}_{28})=\langle x,y,z| x^5,y^2,z^2,(xz)^3,(xy)^2,(yz^{-1})^2\rangle.$$

For $d$ in 28, the corresponding subgroup is
$$i*(\pi_1(\mathcal{F}_d))=\langle xz, y\rangle.$$

By \cite{GAP}, the computer can show that
$[\pi_1(\mathcal{O}_{28}):i_*(\pi_1(\mathcal{F}_d))]=1$, so $d$ is
allowable.

31: There is no candidacy edge. By killing one labeled $3$ edge and
the labeled $2$ arc, it is easy to see there is no allowable dashed
arc.

32, 35, 36, 37, 39: Up to equivalence, there is only one candidacy
edge $a$ which is unknotted, hence allowable. And there is an
unknotted allowable dashed arc $b$.

33: There are three classes of candidacy edges, $\{a,a'\}$,
$\{b,d\}$ and $e$. $a$ is unknotted, hence allowable. For $b$, we
kill $d$, then $\pi_1(\mathcal{F})=\Z_2$, but $\pi_1((S^3,
\Gamma'))$ is $D_3$. For $e$, we kill $f$, then $\pi_1(\mathcal{F})$
is trivial, but $\pi_1((S^3, \Gamma'))$ is $\Z_2$. Hence $b$ and $e$
are not allowable. For a candidacy dashed arc, one end of it must
lie on $e$. Kill $e$, then $\pi_1(\mathcal{F})=\Z_2$, but
$\pi_1((S^3, \Gamma'))=D_2$. Hence there is no allowable dashed arc.

\begin{center}
\scalebox{0.7}{\includegraphics{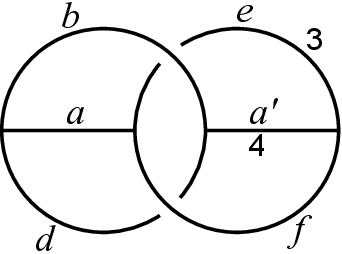}}

Figure 29
\end{center}

34: There are three classes of candidacy edges, $\{a,a'\}$,
$\{b,d\}$ and $e$. $a$ is unknotted, hence allowable. For $e$, we
kill $f$, then $\pi_1(\mathcal{F})$ is trivial, but $\pi_1((S^3,
\Gamma'))$ is $\Z_2$. Hence $e$ is not allowable. This graph
contains four vertices, hence there is no unknotted allowable dashed
arc. Then we need \cite{GAP} to check that the knotted edge $b$ and
the knotted dashed arc $c$ are allowable.

\begin{center}
\scalebox{0.5}{\includegraphics{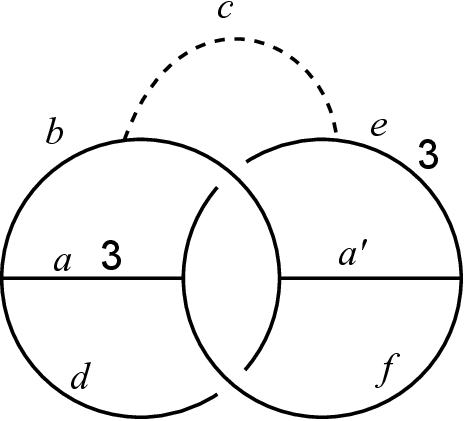}}

Figure 30
\end{center}

By Example \ref{O34}, we have known that
$$\pi_1(\mathcal{O}_{34})=\langle x, y, z \mid x^2,y^3,z^2,(zy)^2,(yxz)^2,(yxzx)^3\rangle.$$

For $b$, the corresponding subgroup is
$$i_*(\pi_1(\mathcal{F}_b))=\langle yxz, x, zy\rangle.$$

By \cite{GAP}, the computer can show that
$[\pi_1(\mathcal{O}):i_*(\pi_1(\mathcal{F}_b))]=1$, so $b$ is
allowable.

For $c$, the corresponding subgroup is
$$i_*(\pi_1(\mathcal{F}_c))=\langle x, y\rangle.$$

Also by \cite{GAP}, the computer can show that
$[\pi_1(\mathcal{O}):i_*(\pi_1(\mathcal{F}_c))]=1$, so $c$ is
allowable.

38: All the $6$ edges are candidacy. $a$ and $a'$ are equivalent.
They are unknotted, hence allowable. Since the graph contains four
vertices, there is no unknotted allowable dashed arc. Then we need
\cite{GAP} to check that $b$, $c$ and $d$ are allowable. They are
all knotted. We also need \cite{GAP} to check that $e$ and $f$ are
not allowable.

\begin{center}
\scalebox{0.8}{\includegraphics{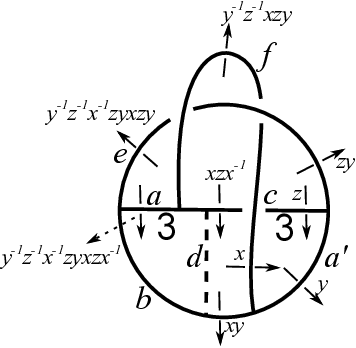}}

Figure 31
\end{center}

$$\pi_1(\mathcal{O}_{38})=\langle x,y,z| x^2,y^2,z^3,(xy)^2,(zy)^2,(y^{-1}z^{-1}x^{-1}zyxzx^{-1})^3\rangle.$$

For $b$ in 38, the corresponding subgroup is
$$i_*(\pi_1(\mathcal{F}_b))=\langle xy, x, y^{-1}z^{-1}x^{-1}zyxzy\rangle.$$

By \cite{GAP}, the computer can show that
$[\pi_1(\mathcal{O}_{38}):i_*(\pi_1(\mathcal{F}_b))]=1$, so $b$ is
allowable.

For $c$ in 38, the corresponding subgroup is
$$i_*(\pi_1(\mathcal{F}_c))=\langle xzx^{-1}, y^{-1}z^{-1}xzy,xyx^{-1}\rangle.$$

By \cite{GAP}, the computer can show that
$[\pi_1(\mathcal{O}_{38}):i_*(\pi_1(\mathcal{F}_c))]=1$, so $c$ is
allowable.

For $d$ in 38, the corresponding subgroup is
$$i_*(\pi_1(\mathcal{F}_d))=\langle xzx^{-1}, xy\rangle.$$

By \cite{GAP}, the computer can show that
$[\pi_1(\mathcal{O}_{38}):i_*(\pi_1(\mathcal{F}_d))]=1$, so $d$ is
allowable.

For $e$ in 38, the corresponding subgroup is
$$i_*(\pi_1(\mathcal{F}_e))=\langle y^{-1}z^{-1}x^{-1}zyxzy, xy, (y^{-1}z^{-1}xzy)^{-1}y(y^{-1}z^{-1}xzy)\rangle.$$

By \cite{GAP}, the computer can show that
$[\pi_1(\mathcal{O}_{38}):i_*(\pi_1(\mathcal{F}_e))]=4$, so $e$ is
not allowable.

For $f$ in 38, the corresponding subgroup is
$$i_*(\pi_1(\mathcal{F}_f))=\langle y^{-1}z^{-1}xzy, xzx^{-1},(zy)^{-1}y(zy)\rangle.$$

By \cite{GAP}, the computer can show that
$[\pi_1(\mathcal{O}_{38}):i_*(\pi_1(\mathcal{F}_f))]=5$, so $f$ is
not allowable.

40: There are only two candidacy edges $a$ and $b$. And there is a
candidacy dashed arc $c$. $a$ and $c$ are unknotted, hence both
allowable. $b$ is knotted, we need \cite{GAP} to check it is
allowable.

\begin{center}
\scalebox{0.8}{\includegraphics{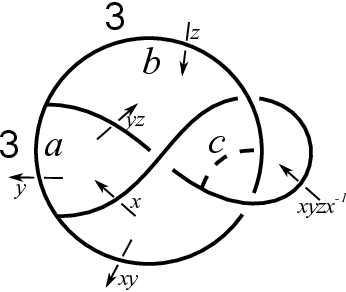}}

Figure 32
\end{center}

$$\pi_1(\mathcal{O}_{40})=\langle x,y,z| x^2,y^3,z^3,zx^{-1}z^{-1}=xyzx^{-1}\rangle.$$

For $b$ in 40, the corresponding subgroup is
$$i_*(\pi_1(\mathcal{F}_b))=\langle y, z, (xyzx^{-1})^{-1}x(xyzx^{-1})\rangle.$$

By \cite{GAP}, the computer can show that
$[\pi_1(\mathcal{O}_{40}):i_*(\pi_1(\mathcal{F}_b))]=1$, so $b$ is
allowable.

23, 29, 30: All the candidacy edges are unknotted and have been
marked as in the orbifold, up to equivalence. We use Lemma
\ref{surjection} below to show there is no allowable dashed arc.

The fundamental groups of these three orbifolds are the finite
groups $\mathbf{T}\times_{\mathbf{C}_3}\mathbf{T}$,
$\mathbf{O}\times_{\mathbf{D}_3}\mathbf{O}$ and
$\mathbf{J}\times\mathbf{J}$ of $SO(4)$, see \cite{Du2} for the
notations. They can be mapped surjectively to $T\times_{C_3}T$,
$O\times_{D_3}O$, $J\times J$ under the $2$ to $1$ map
$SO(4)\rightarrow SO(3)\times SO(3)$. For a possible dashed arc, the
fundamental group of a regular neighborhood is generated by an order
$2$ element and an order $3$ element. Then notice that $T\cong A_4$,
$O\cong S_4$, $J\cong A_5$, by Lemma \ref{surjection}, any two such
elements in the groups $T\times_{C_3}T$, $O\times_{D_3}O$, $J\times
J$ cannot generate the whole group. Hence there is no allowable
dashed arc.
\end{proof}

\begin{lemma}\label{surjection}
Let $S$ be one of the permutation groups $A_4$, $S_4$, $A_5$. Let
$H$ be a subgroup of $S\times S$ such that the restrictions to $H$
of the two canonical projections of  $S\times S$ to $S$ are both
surjective. If $H$ is not isomorphic to $S$ then an order 2 element
and an order 3 element in $H$ cannot generate $H$.
\end{lemma}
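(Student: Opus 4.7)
The plan is to combine Goursat's lemma with a transitivity property of $\mathrm{Aut}(S)$ on $(2,3)$-generating pairs of $S$. By Goursat's lemma, any subgroup $H \leq S \times S$ whose two projections to $S$ are surjective is determined by a pair of normal subgroups $N_1, N_2 \triangleleft S$ together with an isomorphism $\phi \colon S/N_1 \to S/N_2$, via $H = \{(g_1,g_2) : \phi(g_1 N_1) = g_2 N_2\}$; in particular $|H| = |N_1|\cdot |S|$, and $H \cong S$ holds precisely when $N_1 = N_2 = \{1\}$, in which case $H$ is the graph of an automorphism of $S$. Thus under the hypothesis $H \not\cong S$ one has $|H| \geq 2|S|$.

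Suppose for contradiction that $H$ is generated by an element $(a,b)$ of order $2$ and an element $(c,d)$ of order $3$. Projecting on each factor yields $\langle a, c\rangle = \langle b, d\rangle = S$. Since $|a|$ divides $2$ and $|c|$ divides $3$, and a cyclic subgroup of order at most $3$ cannot equal $S$, neither $a$ nor $c$ can be trivial; hence $|a|=|b|=2$ and $|c|=|d|=3$, so $(a,c)$ and $(b,d)$ are both $(2,3)$-generating pairs of $S$.

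The main step is to show that $\mathrm{Aut}(S)$ acts transitively on the set of $(2,3)$-generating pairs of $S$ for each $S = A_4, S_4, A_5$. For such a pair $(x,y)$ the product $xy$ must have a specific order $n$, namely $n = 3, 4, 5$ respectively: other possible orders of $xy$ would force $\langle x, y\rangle$ to be a proper subgroup of $S$ (for instance in $A_5$, $|xy| \in \{2,3\}$ would give $\langle x,y\rangle \cong S_3$ or $A_4$). Hence any such pair presents $S$ as the triangle group $\langle x, y \mid x^2, y^3, (xy)^n\rangle$, and by von Dyck's theorem any two such pairs are related by an automorphism of $S$; equivalently one verifies by a direct count that the number of $(2,3)$-generating pairs equals $|\mathrm{Aut}(S)|$ ($24$ for $A_4$ and $S_4$, $120$ for $A_5$), with free action.

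Granting the transitivity, there exists $\phi \in \mathrm{Aut}(S)$ with $\phi(a) = b$ and $\phi(c) = d$. Then both $(a,b)$ and $(c,d)$ lie on the graph $H_\phi := \{(g, \phi(g)) : g \in S\}$, a subgroup of $S\times S$ isomorphic to $S$; therefore $\langle (a,b),(c,d)\rangle \subseteq H_\phi$ has order at most $|S|$, contradicting $|H| > |S|$. The chief obstacle is the transitivity statement, which is settled by the brief case analysis sketched above.
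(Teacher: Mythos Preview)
Your proof is correct and follows essentially the same strategy as the paper: both reduce to the fact that any two $(2,3)$-generating pairs of $S$ are related by an automorphism of $S$, so that the two generators of $H$ lie on the graph of that automorphism and hence $H\cong S$. Your invocation of Goursat's lemma is an extra (though harmless) step---the paper simply concludes $H\cong S$ directly once the generators lie on a graph subgroup---but your justification of the transitivity via the $(2,3,n)$ triangle-group presentations of $A_4$, $S_4$, $A_5$ is cleaner and more informative than the paper's bare appeal to ``explicit computations in each of the three groups.''
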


\begin{proof}
Let $(x, x')$ and $(y, y')$ be order $2$ and order $3$ elements in
$H$ which generate $H$. Since the two projections restricted to $H$
are surjective, both $x$ and $x'$ have order $2$, and both $y$ and
$y'$ have order $3$; moreover the subgroups generated by $x, y$ and
also by $x', y'$ are both equal to $S$. One can check now by
explicit computations in each of the three groups that the  map
$x\mapsto x'$, $y\mapsto y'$ gives an isomorphism of $S$ to itself.
Hence $H$ is isomorphic to $S$.
\end{proof}

Now we proof our main results.

\vskip 0.1true cm

\begin{proof}
Theorem \ref{main table} follows from Theorem \ref{classify};
Theorem \ref{OE} and Theorem \ref{OES} follow from Theorem \ref{main
table}, with some elementary arithmetic.

Note that $4n(g-1)/(n-2)$ will be $12(g-1)$ when  $n=3$ and $8(g-1)$
when $n=4$; also,  $4n(g-1)/(n-2)$ will be $4(\sqrt{g}+1)^2$ when
$g=(n-1)^2$ and $4(g+1)$ when $g=n-1$.

Only the last two rows of the tables in Theorems \ref{OE} contain
infinitely many genera, corresponding to the orbifolds 15E and 19 in
Theorem \ref{classify}, or just corresponding to Examples 2.1 in
\cite{WWZZ}.

To derive Theorem \ref{OEK} we notice that by Example \ref{cage} we
have $OE^k_g\geq 4(g-1)$. And by Theorem \ref{main table}, we know
all cases with $|G|>4(g-1)$. Then we reach Theorem \ref{OEK}.
\end{proof}

\textbf{Comment:} We define two actions of a finite group $G$ to be
equivalent if the corresponding groups of homeomorphisms of $(S^3,
\Sigma_g)$ are conjugate (i.e., allowing isomorphisms of $G$). By
the proof of Theorem \ref{classify} and the tables above, we have:
There are only finitely many types of actions of $G$ on $(S^3,
\Sigma_g)$ such that $|G|>4(g-1)$ and the handlebody orbifold
bounded by $\Sigma_g/G$ is not of type II. In particular there are
only finitely many types of actions of $G$ on $(S^3, \Sigma_g)$
realizing $OE_g$ for $g\ne 21, 481$.

\section{Examples}

\begin{example}\label{cage}
Consider $S^3=\{(z_1,z_2)\in\mathbb{C}^2\mid |z_1|^2+|z_2|^2=1\}$ as
the unit sphere in $\mathbb{C}^2$. Let $a_j=(e^{\frac{2j\pi
i}{m}},0)$, $b_k=(0,e^{\frac{2k\pi i}{n}})$, here $j=0,1,\dots,m-1$,
$k=0,1,\dots,n-1$. Join each $a_j$ and $b_k$ by a shortest geodesic
arc in $S^3$, we get a two-parted graph $\Gamma\in S^3$ with $m+n$
vertices and $mn$ edges. Hence $\chi(\Gamma)=m+n-mn$. A regular
neighborhood of $\Gamma$ in $S^3$ is a handlebody of genus
$g=(m-1)(n-1)$, we denote its boundary by $\Sigma_g$.
See Figure 33 for $m=n=3$ and $g=4$.

\begin{center}
\scalebox{0.5}{\includegraphics{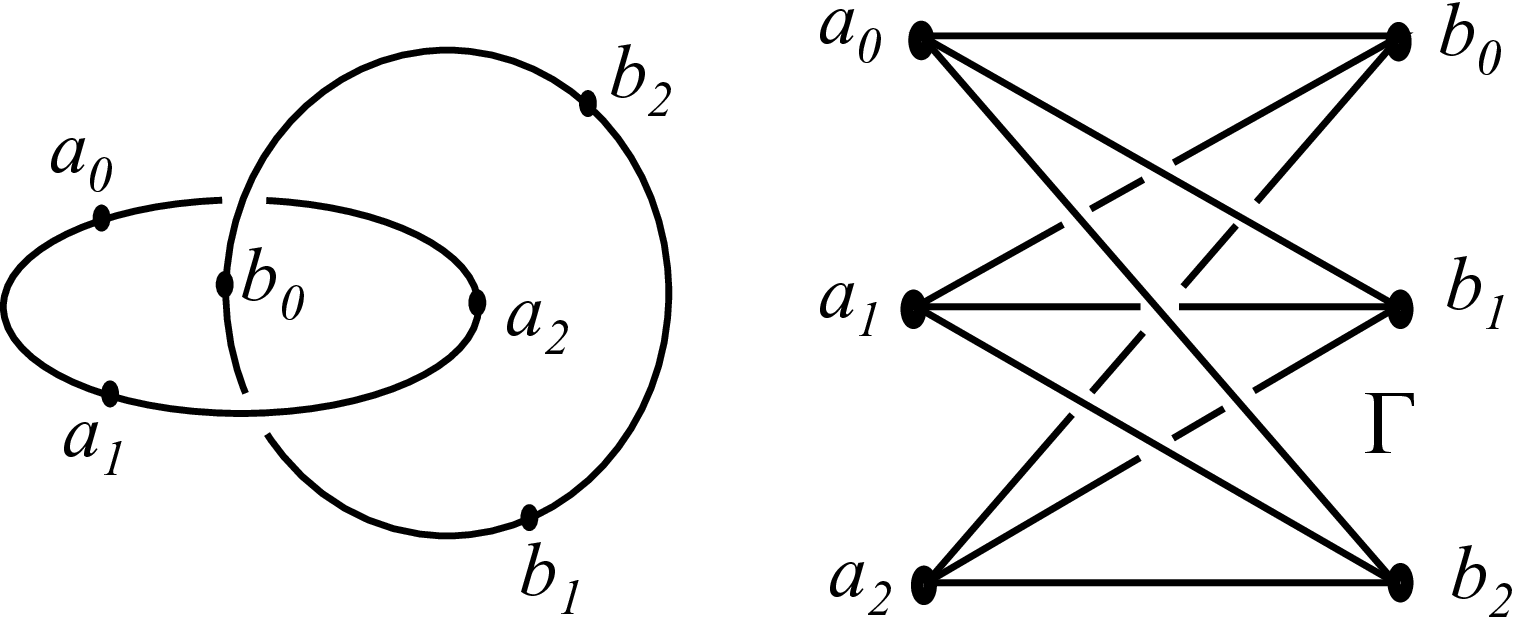}}

Figure 33
\end{center}

Then there is a group $G\cong(\Z_m\times\Z_n)\rtimes\Z_2$ acting on
$(S^3,\Sigma_g)$ given by:
$$x: (z_1,z_2)\mapsto(e^{\frac{2\pi i}{m}}z_1,z_2),$$
$$y: (z_1,z_2)\mapsto(z_1,e^{\frac{2\pi i}{n}}z_2),$$
$$t: (z_1,z_2)\mapsto(\bar{z}_1,\bar{z}_2).$$

When $m=2$, $g=n-1$, $G\cong D_{g+1}\times \Z_2$ acts on
$(S^3,\Sigma_g)$. This gives an extendable group action of order
$4(g+1)$, corresponding to orbifold 15E in the list of Theorem
\ref{classify}.

When $m=n$, $g=(n-1)^2$, there is another action of order $2$:
$$s: (z_1,z_2)\mapsto(z_2,z_1).$$
So there is a larger group $G=\langle
x,y,s,t\rangle\cong(\Z_n\times\Z_n)\rtimes(\Z_2\times\Z_2)$ acting
on $(S^3,\Sigma_g)$. This gives an extendable group action of order
$4(\sqrt{g}+1)^2$, corresponding to orbifold 19 in the list of
Theorem \ref{classify}.

The following picture shows how the orbifolds can be obtained.

\begin{center}
\scalebox{0.5}{\includegraphics{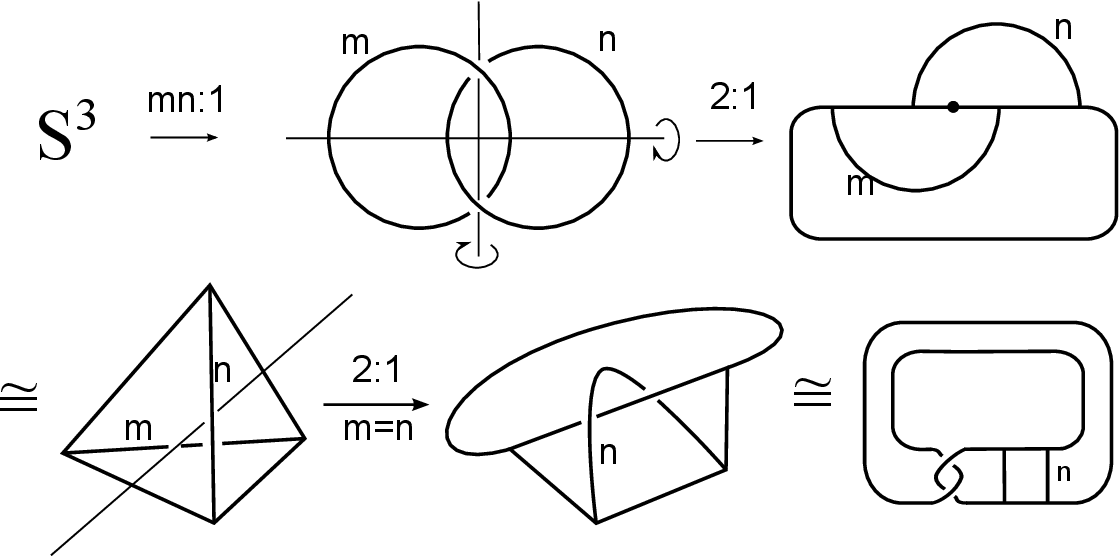}}

Figure 34
\end{center}
\end{example}

\begin{example}
If in the orbifold 15E We choose a dashed arc connecting a vertex
with incident edges labeled $(2, 2, 2)$ and an edge of index $2$. We
can knot this arc in an arbitrary way. The boundary of a regular
neighborhood of the arc is a knotted 2-suborbifold, and its preimage
in $S^3$ is connected. This gives us an order $4(g-1)$ extendable
$(D_{g-1}\oplus \Z_2)$-action on $\Sigma_g$, with respect to a
knotted embedding.

\begin{center}
\scalebox{0.6}{\includegraphics*[0pt,0pt][390pt,114pt]{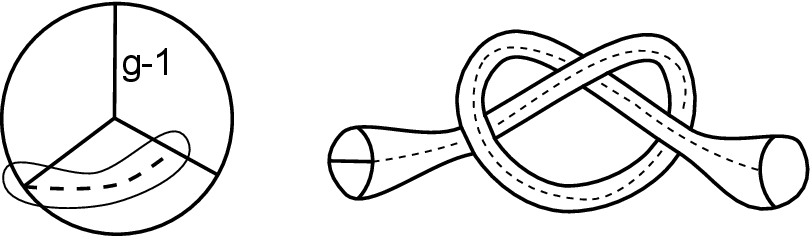}}

Figure 35
\end{center}
\end{example}

\begin{example}
Figure 37 shows a knotted handlebody of genus $g=11$ which is
invariant under a group action of order 120 of $S^3$, corresponding
to edge $c$ in 34 (Table VI). All points are colored by their
distance from the origin.

The group is isomorphic to $A_5\times \Z_2$, where we consider the
alternating group $A_5$ as the orientation preserving symmetry group
of the 4-dimensional regular Euclidean simplex, and $\Z_2$ is
generated by $-\text{id}$ on $E^4$. Let the 4-simplex be centered at
the origin of $E^4$ and inscribed in the unit sphere $S^3$. The
radial projection of its boundary to $S^3$ gives a tesselation of
$S^3$ by 5 tetrahedra invariant under the action of $A_5$. We
present one of these tetrahedron in Figure 36.

\begin{center}
\scalebox{0.6}{\includegraphics*[0pt,0pt][185pt,172pt]{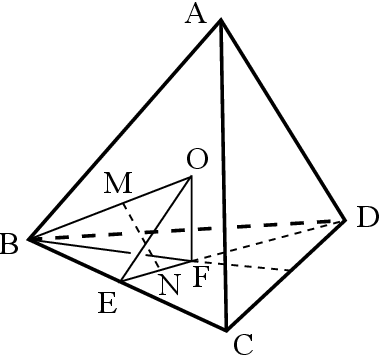}}

Figure 36
\end{center}

Imagine the figure has spherical geometry. $O$ is the center of the
tetrahedron, $F$ is the center of triangle $\triangle BCD$, $E$ is
the middle points of $BC$, $M$ is the middle points of $BO$, $N$ is
the middle points of $EF$. The orbit of the geodesic $MN$ under the
group action of $A_5$ joins to a graph in $S^3$; note that this
graph is invariant also under $-\text{id}$ on $S^3$. Projecting to
$E^3$, Figure 37 shows this graph and the boundary surface of the
regular neighborhood of the projected image(by \cite{Mathematica}).

\vskip 0.2true cm

\begin{center}
\scalebox{0.6}{\includegraphics*[0pt,0pt][520pt,565pt]{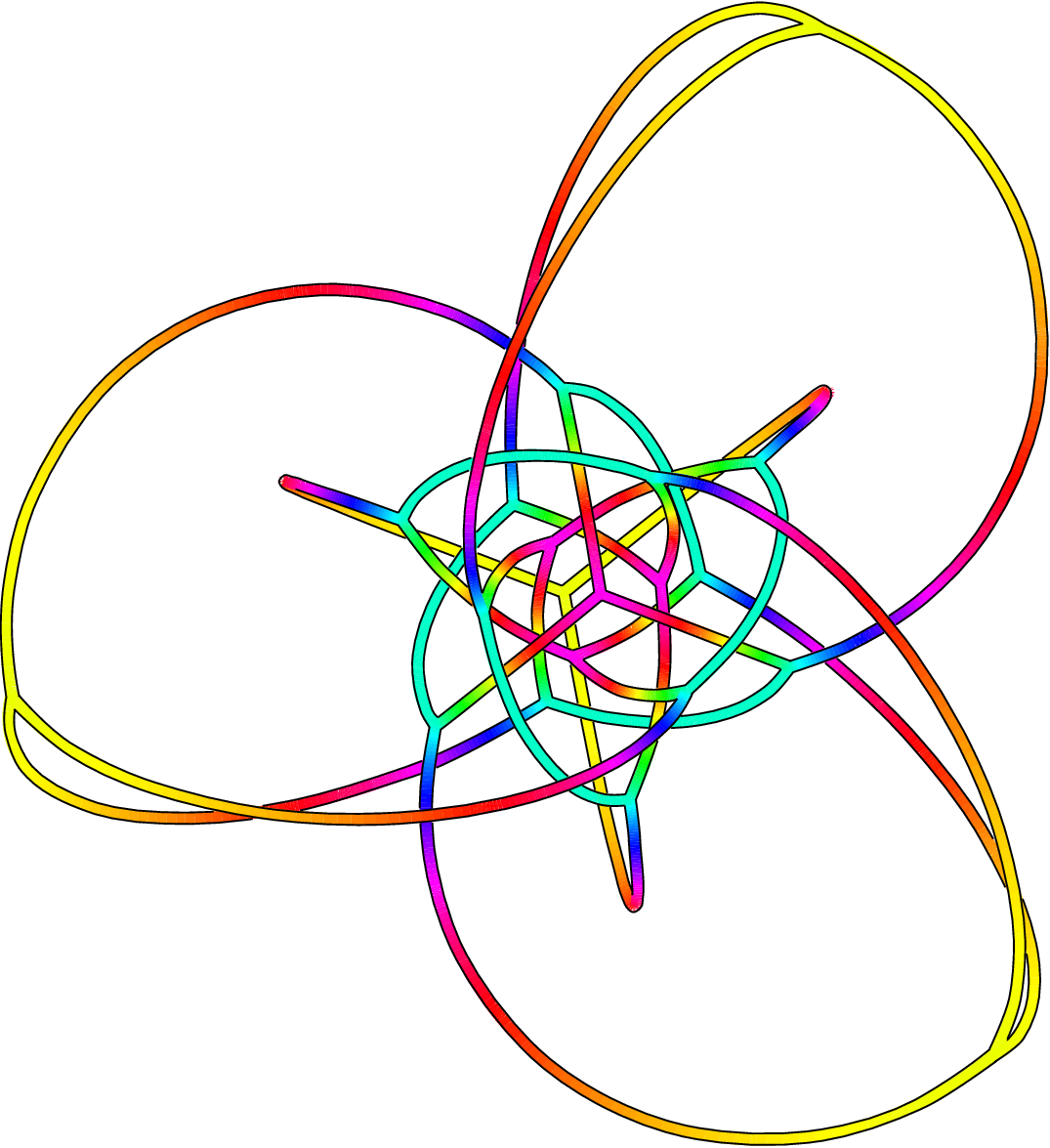}}

Figure 37
\end{center}
\end{example}

\section{Appendix}
In this section we give a very brief introduction on how to use the computer to examining the proofs given in Section 6.

One can download the GAP software in the ``download" section in

http://www.gap-system.org/

and in the ``Installation" section it shows how to run GAP on your computer.
On the same webpage, in ``Documentation$\rightarrow$Manuals" section, you can get all the details about how to use all the functions in GAP.

For example here we will explain how to input the data in Example 6.3 into GAP.
In the command line window of GAP, type
\beginexample
f:=FreeGroup("x", "y", "z");
x:=f.1;
y:=f.2;
z:=f.3;
G:=f/[x^2, y^3, z^2, (z*y)^2, (y*x*z)^2, (y*x*z*x)^3];
\endexample
to input the orbifold's fundamental group into GAP.
Then simply type
\beginexample
Size(G);
\endexample
then GAP will show that this group order is 120.
Then you can use
\beginexample
u:=G.1;
v:=G.2;
w:=G.3;
H:=GroupWithGenerators([v*u*w, u, w*v]);
\endexample
to input the subgroup standing for the arc orbifold into GAP.
Then use
\beginexample
Index(G, H);
\endexample
GAP will show that this is equal to 1.

All the usage of functions such as ``FreeGroup", ``GroupWithGenerators", ... can be found in ``Documentation$\rightarrow$Manuals$\rightarrow$Reference" in GAP's homepage.


\begin{thebibliography}{ZZZZZ}
\bibitem[BMP]{BMP}
M. Boileau, S. Maillot, J. Porti, {\it Three-dimensional orbifolds
and their geometric structures}, Panoramas et Synth��ses  15.
Soci��t�� Math��matique de France, Paris 2003.

\bibitem[BZ]{BZ} G. Burde,  H. Zieschang, {\it Knots.} De Gruyter Studies in
Mathematics 5. Walter de Gruyter \& Co., Berlin 1985.

\bibitem[Du1]{Du1}
W.D. Dunbar, {\it  Geometric orbifolds}, Rev. Mat. Univ. Complut.
Madrid 1 (1988), 67-99

\bibitem[Du2]{Du2}
W.D. Dunbar, {\it Nonfibering spherical 3-orbifolds}, Trans. Amer.
Math. Soc. 341 (1994),  121-142.

\bibitem[Hu]{Hu} A. Hurwitz, {\it \"Uber algebraische Gebilde mit eindeutigen
Transformationen in sich}, Math. Ann. 41 (1893), 403-442

\bibitem[MZ]{MZ}
A. Miller, B. Zimmermann, {\it Large groups of symmetries of
handlebodies}, Proc. Amer. Math. Soc. 106 (1989), 829-838

\bibitem[MMZ]{MMZ}
D. McCullough, A. Miller, B. Zimmermann, {\it Group actions on
handlebodies}, Proc. London Math. Soc. (3) 59 (1989), 373-416

\bibitem[MY]{MY}
W. H. Meeks,  S. T. Yau, {\it The equivariant Dehn��s lemma and
loop theorem}, Comment. Math. Helv.  56 (1981), 225-239

\bibitem[Ro]{Ro}
J. Rotman, {\it An Introduction to the Theory of Groups},
Springer-Verlag, New York 1995

\bibitem[Th]{Th}
 W. P. Thurston, {\it The geometry and topology of three-manifolds},
Lecture Notes Princeton University 1978

\bibitem[WWZZ]{WWZZ}
C. Wang, S. C. Wang, Y. M. Zhang, B. Zimmermann, {\it  Extending finite group actions on surfaces over $S^3$}. Topology Appl.
160 (2013), no. 16, 2088-2103.

\bibitem[Zi]{Zi}
B.Zimmermann, {\it \"Uber Hom\"oomorphismen $n$-dimensionaler
Henkelk\"orper und endliche Erweiterungen von Schottky-Gruppen},
Comment. Math. Helv. 56 (1981), 474-486.

\bibitem[GAP]{GAP}
The GAP Group, {\it GAP --- Groups, Algorithms, and Programming},
Version 4.4.12; 2008 (http://www.gap-system.org)

\bibitem[Mathematica]{Mathematica}
Wolfram Research, Inc., {\it Mathematica},  Version 6.0, Champaign,
IL (2007).
\end{thebibliography}
\end{document}